\documentclass[a4paper, 12pt]{amsart}

\usepackage{graphicx} 
\usepackage{amsmath}
\usepackage{amsthm}
\usepackage{amsfonts}
\usepackage{amssymb}
\usepackage{amscd}
\usepackage{color}
\usepackage[linktocpage]{hyperref}

\usepackage{comment}

\usepackage[T1]{fontenc}
\usepackage{libertine}

\usepackage{tikz}

\usepackage{here}

\newtheorem{thm}{Theorem}[section]
\newtheorem{cor}[thm]{Corollary}
\newtheorem{lem}[thm]{Lemma}
\newtheorem{prop}[thm]{Proposition}

\theoremstyle{definition}
\newtheorem{defn}[thm]{Definition}

\newtheorem{rem}[thm]{Remark}
\newtheorem{conj}[thm]{Conjecture}

\numberwithin{equation}{section}

\def\FF{\mathbb F}
\def\NN{\mathbb N}
\def\ZZ{\mathbb Z}
\def\QQ{\mathbb Q}
\def\RR{\mathbb R}
\def\CC{\mathbb C}
\def\cA{\mathcal{A}}
\def\cB{\mathcal{B}}
\def\cC{\mathcal{C}}
\def\cS{\mathcal{S}}
\def\cR{\mathcal{R}}
\def\cU{\mathcal{U}}
\def\cV{\mathcal{V}}

\def\flS{\mathcal{S}^\flat}
\def\flR{\mathcal{R}^\flat}

\def\GL{\operatorname{GL}}
\def\SL{\operatorname{SL}}
\def\PSL{\operatorname{PSL}}
\def\Dic{\operatorname{Dic}}
\def\Ker{\operatorname{Ker}}
\def\Tr{\operatorname{Tr}}

\def\beine{\mathbf 1}
\def\bi{\mathbf i}
\def\bj{\mathbf j}
\def\bk{\mathbf k}

\def\<{\langle}
\def\>{\rangle}
\def\too{\longrightarrow}

\begin{document}

\title{Applications of the finite specializations of the $q$-deformed modular group}

\author[T. Byakuno]{Takuma Byakuno} 
\address{Department of Mathematics, Kansai University, Osaka, 564-8680, Japan.}
\email{k241344@kansai-u.ac.jp,tatatagobyakuno0822@gmail.com}

\author[X. Ren]{Xin Ren}
\address{Osaka Central Advanced Mathematical Institute (OCAMI), Osaka Metropolitan University, 3-3-138 Sugimoto, Sumiyoshi-ku Osaka 558-8585, Japan}
\email{xin-ren@omu.ac.jp, xinren1213@gmail.com}

\author[K. Yanagawa]{Kohji Yanagawa} 
\address{Department of Mathematics, 
Kansai University, Osaka, 564-8680, Japan.}
\email{yanagawa@kansai-u.ac.jp}

\subjclass[2020]{11A55, 11F06, 05A30}
\keywords{$q$-deformed rational numbers, modular
groups, finite subgroups of $\operatorname{SL}(2,\mathbb{C})$, continued fractions}

\begin{abstract}
 Recently, Morier-Genoud and Ovsienko introduced the $q$-deformed modular group.
 For the construction, they first gave a group $G_q \subset \operatorname{GL}(2, \mathbb{Z}[q^{\pm}])$ and then set 
$\operatorname{PSL}_q(2,\mathbb{Z}):=G_q/Z(G_q)$. We show that, for $\zeta \in \mathbb{C}^*$, $\operatorname{PSL}_q(2,\mathbb{Z})|_{q=\zeta}$ is finite, if and only if so is $G_q(\zeta):=G_q|_{q=\zeta} \subset \operatorname{GL}(2,\mathbb{C})$, if and only if $\zeta=\zeta_n$ for $n=2,3,4,5$, where $\zeta_n$ is a primitive $n$-th root of unity. Although this result is essentially available in the existing literature, we give a precise proof for further developments.  
For example, we show that the set of traces of elements in $G_q(\zeta)$  and the set of the special values at $\zeta$ of the normalized Jones polynomials of rational links are finite if and only if $\zeta =\zeta_n$ for $n=2,3,4,5$ and also for $n=6$.  We also study the quantized Pythagorean triples.  
\end{abstract}

\maketitle

\section{Introduction}
For  $n \in \NN$, the Euler $q$-integer $[n]_q=1+q+\cdots +q^{n-1}$ is a very classical subject of mathematics. Recently, 
Morier-Genoud and Ovsienko \cite{MO1} introduced the $q$-deformation $\displaystyle \left[\frac{r}{s} \right]_q$ of an irreducible fraction $\frac{r}{s} \in \QQ$ which can be written the quotient of two (Laurent) polynomials as follows:
$$
\left[\displaystyle\frac{r}{s}\right]_q= \frac{\cR_{\frac{r}s}(q)}{\cS_{\frac{r}s}(q)}.
$$
To define the $q$-deformed rational  $\displaystyle\left[\frac{r}{s}\right]_q$, they introduced $q$-deformations of continued fractions and the modular group. 
It is a classical result that  $\mathrm{SL}(2,\mathbb{Z})$ is generated by 
$$
R:=\begin{pmatrix}
1 & 1 \\
0 & 1 \\
\end{pmatrix} \quad \text{and} \quad
S:=\begin{pmatrix}
0 & -1 \\
1 & 0 \\
\end{pmatrix}, 
$$
and it (and its quotient $\mathrm{PSL}(2,\mathbb{Z}):=\SL(2,\ZZ)/\{ \pm E_2\}$) acts on $\displaystyle \mathbb{Q}\cup \left\{\left(\frac{1}{0}\right)\right\}$ by 
\[
\displaystyle
\begin{pmatrix}
a & b \\
c & d \\
\end{pmatrix}\left(\frac{r}{s}\right)=\frac{ar+bs}{cr+ds}.\]

They define
$R_q:=\begin{pmatrix}
q & 1 \\
0 & 1 \\
\end{pmatrix}$ and $S_q:=\begin{pmatrix}
0 & -q^{-1} \\
1 & 0 \\
\end{pmatrix}$ as the $q$-deformations of $R$ and $S$. 
Let $G_q$ be the subgroup of $\mathrm{GL}(2,\mathbb{Z}[q^{\pm1}])$ generated by $R_q$ and $S_q$.
Then $Z(G_q)= \{ \pm q^n E_2 \mid n \in \ZZ \}$, and  
\cite{LM} showed that  $\mathrm{PSL}_q(2,\mathbb{Z}) :=G_q/Z(G_q)$ is isomorphic to the modular group  $\mathrm{PSL}(2,\mathbb{Z})$.
The $q$-deformed rationals  $\displaystyle\left[\frac{r}{s}\right]_q$ can be defined via $\mathrm{PSL}_q(2,\mathbb{Z})$, in particular, 
the entries of a matrix $M_q \in \PSL_q(2,\ZZ)$ are 
$\cR_{\frac{r}s}(q)$ or $\cS_{\frac{r}s}(q)$ for some $\frac{r}s \in  \QQ$, up to multiplication by $\pm q^n$ for $n \in \ZZ$. See \S2 for detail.

The notion of $q$-deformed rational numbers is further extended to arbitrary real numbers \cite{MO2} by some number-theoretic properties of irrational numbers. These works are related to many directions including triangulated categories~\cite{BBL}, the Markov-Hurwitz approximation theory \cite{K, LMOV, XR}, 
Jones polynomials of rational knots \cite{KR, MO1, BBL, XR2, KMRWY} and combinatorics of posets \cite{OEK1, OEK2}. 

 In this paper, for $\zeta \in \CC^*$, we study the subgroup 
$$G_q(\zeta):=\{M_q|_{q=\zeta} \mid M_q \in G_q \}  \subset \mathrm{GL}(2,\mathbb{C}).$$
Let $\zeta_n \in \CC$ be a primitive $n$-th roots of unity. 
Although the following result is essentially implicit in previous works \cite{LLS, L24, FK} (see Remark~\ref{existing results} below), we include a detailed proof because it serves as the foundation of new results presented in the latter half of the paper. 



\begin{thm}[{cf. \cite{LLS, L24,FK}}]\label{finite} The following hold. 

(1) For $\zeta \in \CC^*$, $G_q(\zeta)$ is a finite group if and only if $\zeta =\zeta_n$ for $n=2,3,4,5$. 

(2) We have $G_q(\zeta_2)\cap \SL(2, \ZZ)=G_q(-1) \cap \SL(2, \ZZ)\cong C_6$, 
$$G_q(\zeta_3) \cap \SL(2, \CC) \cong 
G_q(\zeta_4) \cap \SL(2, \CC) \cong \SL(2,\FF_3)$$
and 
$$G_q(\zeta_5) \cap \SL(2, \CC)  \cong \SL(2,\FF_5).$$
Here $\SL(2,\FF_3)$ (resp. $\SL(2,\FF_5)$) is called the ``binary tetrahedral group" (resp. ``binary icosahedral group"), and has order $24$ (resp. $120$).  Moreover, $G_q(\zeta_2) \cong D_6$, 
$G_q(\zeta_3) \cong \SL(2,\FF_3) \times C_3$,   $G_q(\zeta_4) \cong \SL(2, \FF_3) \rtimes C_4$, 
and $G_q(\zeta_5) \cong \SL(2,\FF_5) \times C_5$.  
\end{thm}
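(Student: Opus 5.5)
The plan is to reduce everything to three generators with known orders, and then to read finiteness off from traces. I would first record the basic relations. We have $\det R_q=q$ and $\det S_q=q^{-1}$, and $S_q^2=-q^{-1}E_2$. A direct computation gives $R_qS_q=\begin{pmatrix}1&-1\\1&0\end{pmatrix}$, which lies in $\SL(2,\ZZ)$ and satisfies $(R_qS_q)^3=-E_2$.

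\textbf{Necessity of $\zeta=\zeta_n$.} If $G_q(\zeta)$ is finite, then $R_\zeta$ has finite order. Its eigenvalues are $\zeta$ and $1$, so $\zeta$ must be a root of unity. The case $\zeta=1$ is excluded, since $G_q(1)=\SL(2,\ZZ)$ is infinite. So $\zeta=\zeta_n$ with $n\ge 2$, and $R_\zeta$ is diagonalizable of order $n$.

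\textbf{Excluding $n\ge 6$.} For the converse direction I use that every finite subgroup of $\SL(2,\CC)$ is conjugate into $SU(2)$. Hence for $M$ in a finite subgroup of $\GL(2,\CC)$, the normalized trace $\Tr(M)/\sqrt{\det M}$ is real and lies in $[-2,2]$. This holds for every Galois conjugate as well, since $G_q(\zeta)$ and $G_q(\zeta')$ are conjugate under Galois action on the entries. Now take $M=R_q^kS_q=\begin{pmatrix}[k]_q&-q^{k-1}\\1&0\end{pmatrix}$. Its normalized trace is $\pm[k]_q/q^{(k-1)/2}$, which at $q=e^{2\pi i/n}$ equals $\pm\sin(k\pi/n)/\sin(\pi/n)$. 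For $k=3$ this is $1+2\cos(2\pi/n)$.
\begin{itemize}
\item If $n\ge 7$, this value exceeds $2$, so the group is infinite.
\item If $n=6$, the value is exactly $2$. Then $M/\sqrt{\det M}$ is a non-scalar unipotent element, hence of infinite order.
\end{itemize}
This proves the ``only if'' part of (1).

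\textbf{Sufficiency for $n\le5$.} Pass to the image $\bar G$ of $G_q(\zeta_n)$ in $\mathrm{PGL}(2,\CC)$. It is generated by $\bar S$ of order $2$, $\overline{RS}$ of order dividing $3$, and $\bar R$ of order $n$. Moreover $\bar R=\overline{RS}\cdot\bar S^{-1}$. So $\bar G$ is a quotient of the triangle group $\Delta(2,3,n)$, which is finite for $n\le5$ because $\frac12+\frac13+\frac1n>1$; its orders are $6,12,24,60$ for $n=2,3,4,5$.

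The kernel of $G_q(\zeta_n)\to\bar G$ consists of scalars. Determinants of elements of the group are powers of $\zeta_n$, and the scalars are of the form $\pm\zeta_n^m E_2$, so the kernel is finite. Hence $G_q(\zeta_n)$ is finite, completing (1).

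\textbf{Identifying the groups in (2).} The main obstacle is identifying the groups exactly; I would do this by order counting.
\begin{itemize}
\item For $n=2$: everything lies in $\GL(2,\ZZ)$. The group $\langle R_{-1},S_{-1}\rangle$ is dihedral of order $12$. Its intersection with $\SL(2,\ZZ)$ is the cyclic group generated by $R_qS_q$, of order $6$.
\item For $n=3,4,5$: the intersection $G\cap\SL(2,\CC)$ is a finite subgroup of $\SL(2,\CC)$ containing $-E_2$ (since $(R_qS_q)^3=-E_2$). Its image in $\PSL(2,\CC)$ is computed from $\bar G$ together with the determinant condition. That image should be $A_4$ for $n=3,4$ and $A_5$ for $n=5$. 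Note that for $n=4$ the projective image is $S_4$, but the determinant-$1$ part only sees $A_4$; this needs checking, and I expect it to be the delicate point. The binary lifts are then $\SL(2,\FF_3)$ and $\SL(2,\FF_5)$ by the classification of finite subgroups of $\SL(2,\CC)$.
\end{itemize}

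For the full groups I use the exact sequence $1\to G\cap\SL\to G\xrightarrow{\det}\langle\zeta_n\rangle\to1$.
\begin{itemize}
\item For $n=3,5$ (odd): the scalar $\zeta_n E_2$ lies in $G$. I would exhibit it as a word, for example via $S_q^2=-q^{-1}E_2$ combined with $-E_2$. Since its determinant $\zeta_n^2$ generates $\langle\zeta_n\rangle$, the central subgroup $\langle\zeta_nE_2\rangle\cong C_n$ splits the sequence, giving a direct product.
\item For $n=4$: the determinant map cannot be split centrally, because $\pm\zeta_4 E_2$ has determinant $-1$. Instead $R_{\zeta_4}$, which has order $4$ and determinant $\zeta_4$, gives a non-central splitting. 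This yields $\SL(2,\FF_3)\rtimes C_4$.
\end{itemize}
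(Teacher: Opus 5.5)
Your proof is correct in its overall structure. For the ``if'' direction of (1) and for (2), it follows a genuinely different route from the paper.

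\textbf{Comparison with the paper.} The paper works by explicit enumeration. It lists the $24$ elements of $H=G_q(\zeta_n)\cap\SL(2,\CC)$ by hand for $n=3,4$ and recognizes $\SL(2,\FF_3)$ through quaternion units inside $H$. For $n=5$ it uses a computer to obtain $|G_q(\zeta_5)|=600$ and words $A,B$ with $A^3=B^5=(AB)^2=-E_2$.

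You instead note that the projective image $\overline G$ is generated by $\overline S$ and $\overline{RS}$ subject to $\overline S^{\,2}=\overline{RS}^{\,3}=(\overline{RS}\,\overline S^{\,-1})^n=1$. So $\overline G$ is a quotient of the $(2,3,n)$ triangle group, and the scalar kernel is finite. This route buys three things:
\begin{itemize}
\item it is computer-free;
\item it explains why exactly $n\le 5$ occurs, via $\frac12+\frac13+\frac1n>1$;
\item it yields $|G_q(\zeta_n)|=|\overline G|\cdot|Z|$ and $\overline G\cong S_3,A_4,S_4,A_5$ conceptually.
\end{itemize}
What the paper's enumeration buys in return is the explicit list of matrices and traces. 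That list is reused in Corollaries~\ref{zeta5} and \ref{trace for 3,4,5}, and your argument does not deliver it.

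Your exclusion of $n\ge 6$ is essentially the paper's; at $n=6$ you use the same element $R_q^3S_q$. For $n\ge 7$, your uniform choice $k=3$, with $|[3]_{\zeta_n}|=1+2\cos(2\pi/n)>2$, is a slightly simpler substitute for Lemma~\ref{lemma_j-1}.

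\textbf{Steps you still need to prove.} Two steps in (2) are stated rather than proved.

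First, you need $\overline G$ to be the full triangle group and not a proper quotient. This holds because $\overline S$, $\overline{RS}$, $\overline R$ have exact orders $2,3,n$ ($R_{\zeta_n}$ and $R_{\zeta_n}S_{\zeta_n}$ have distinct eigenvalues). Moreover, no proper quotient of the corresponding group $S_3,A_4,S_4,A_5$ has order divisible by $\operatorname{lcm}(6,n)$.

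Second, write $Z$ for the scalar matrices in $G=G_q(\zeta_n)$. Then $[\overline G:\overline H]=[G:HZ]=n/|\det Z|$, and $|H|=2|\overline H|$ because $H\cap Z=\{\pm E_2\}\ni (R_{\zeta_n}S_{\zeta_n})^3$.
\begin{itemize}
\item For odd $n$, $S_{\zeta_n}^2=-\zeta_n^{-1}E_2$ gives $\det Z=\langle\zeta_n\rangle$, so $\overline H=\overline G$.
\item For $n=4$, the point you flagged is settled by your unproved remark that scalars are $\pm\zeta_n^mE_2$. The justification is that all entries lie in $\ZZ[i]$, so $\zeta_8E_2\notin G$. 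Hence $Z=\{\pm E_2,\pm iE_2\}$ and $\det Z=\{\pm1\}$, so $\overline H$ is the index-$2$ subgroup $A_4$ of $S_4$.
\end{itemize}
The classification then gives $\SL(2,\FF_3)$ and $\SL(2,\FF_5)$ as you say.

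The same count settles $n=2$, which you only asserted. Here $|\overline G|=6$ and $Z=\{\pm E_2\}$, so $|G|=12$ and $H=\langle R_{-1}S_{-1}\rangle\cong C_6$. The involution $R_{-1}$ inverts $R_{-1}S_{-1}$, giving $D_6$.
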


In \cite{MO1,KMRWY}, it has been shown that $\cS_{\frac{r}s}(\zeta_n)=0$ if and only if $s \, (=\cS_{\frac{r}s}(1)) \in n \ZZ$ for $n=2,3,4$. In the present paper, we show the following. 

\begin{cor}[Corollaries~\ref{zeta5} and \ref{trace for 3,4,5}] 
The following hold. 
\begin{itemize}
\item[(1)] $\cS_{\frac{r}s}(\zeta_5)=0$ if and only if  $s \in 5\ZZ$ and $r \equiv \pm 1 \pmod{5}$. 
\item[(2)] Let $M_q \in G_q$, and set $f(q):=\Tr M_q$. 
For $n=3,4,5$, $f(1) \in n\ZZ$ implies $f(\zeta_n)=0$. 
\end{itemize}
\end{cor}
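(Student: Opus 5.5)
Both parts will be derived from Theorem~\ref{finite}. The tool is the specialization homomorphism $\varphi_n\colon G_q \to G_q(\zeta_n)$, $M_q\mapsto M_q|_{q=\zeta_n}$, compared with $\varphi_1\colon G_q\to \SL(2,\ZZ)$ (up to sign) reduced modulo $n$.

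\textbf{Part (1).} I will realize $\cS_{\frac rs}$ as a matrix entry. By the construction recalled in \S2, there is an $M_q\in G_q$ with first column $(\cR_{\frac rs}(q),\cS_{\frac rs}(q))^T$ up to a unit $\pm q^k$; its specialization at $q=1$ has first column $(r,s)^T$. Hence $\cS_{\frac rs}(\zeta_5)=0$ exactly when $\varphi_5(M_q)$ is upper triangular. The key step is to understand the composite
\[
\SL(2,\ZZ)\to \PSL(2,\ZZ)\cong \PSL_q(2,\ZZ)\to G_q(\zeta_5)/(\text{scalars})\cong A_5\cong \PSL(2,\FF_5).
\]
I expect this to factor through reduction mod $5$. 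The reason is that $\PSL(2,\ZZ)=\langle S,RS\rangle$ with $S^2=(RS)^3=1$, and the image is $A_5$ with the extra relation $R^5=1$. Since $R_q^5|_{\zeta_5}$ is scalar, $\Gamma(5)$ lies in the kernel, giving a surjection $\PSL(2,\FF_5)\to A_5$. This surjection is an isomorphism by counting orders, using Theorem~\ref{finite}(2).

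An isomorphism $A_5\cong\PSL(2,\FF_5)$ need not be the "identity", but that is harmless for the argument. The upper triangular matrices in $G_q(\zeta_5)$ modulo scalars form a subgroup $B'$ containing the image of $R$, which has order $5$. In $\PSL(2,\FF_5)$ the reduction of $R$ generates a Sylow $5$-subgroup, whose normalizer is the Borel subgroup, of order $10$. So I must decide whether $B'$ is that Borel (order $10$) or just $\langle R\rangle$ (order $5$).

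\begin{itemize}
\item If $B'$ is the Borel, the condition would be $s\equiv0 \pmod 5$ with no restriction on $r$. This contradicts the claimed answer, so presumably it fails.
\item The alternative is that upper triangularity at $\zeta_5$ corresponds to $M\in \pm\langle R\rangle\Gamma(5)$, i.e.\ $s\equiv 0$ and $a\equiv\pm1 \pmod 5$, with $a=r$. This is the stated condition.
\end{itemize}

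To decide, I will check directly that the diagonal class $\mathrm{diag}(2,3)$ (i.e.\ $r\equiv\pm2$) does not specialize to a triangular matrix. The test case is $\frac rs=\frac25$: I will compute $\cS_{2/5}(q)$ and verify it does not vanish at $\zeta_5$, whereas $\cS_{1/5}=[5]_q$ does vanish there. The computation of $B'$ by this test case is the main obstacle. It should follow from $\mathrm{diag}(2,3)$ having order $2$ in $\PSL(2,\FF_5)$, while the triangular subgroup of $G_q(\zeta_5)$ is generated by $R_q$ and scalars: I will show this via the structure $\SL(2,\FF_5)\times C_5$.

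\textbf{Part (2).} For $n=3,4,5$, Theorem~\ref{finite} identifies $G_q(\zeta_n)$ modulo scalars with $\PSL(2,\FF_n)$ for $n=3,5$, and with $A_4$ for $n=4$. Again this identification is compatible with reduction mod $n$ of $\varphi_1(M_q)$, via the relation $R^n=1$. First I will write $M_q=\pm q^k N$ with $\varphi_n(N)$ in the $\SL$-part. Then $f(1)\equiv0\pmod n$ says the image of $M$ in $\SL(2,\ZZ/n)$ is trace zero, i.e.\ has order $4$ (order $2$ in $\PSL$). An element of order $2$ in the projective image lifts in the binary polyhedral group $G_q(\zeta_n)\cap\SL(2,\CC)$ to an element of order $4$, which has eigenvalues $\pm i$ and hence trace $0$. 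Multiplying by the scalar keeps the trace $0$, so $f(\zeta_n)=0$.

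For $n=4$ the reduction of $\SL(2,\ZZ)$ mod $4$ must be handled through the quotient $A_4$; I will check that trace $\equiv0\pmod4$ still forces order $2$ in the image. One caveat is that identity elements with trace $\equiv 0$ cannot occur, since $\pm2\not\equiv0\pmod n$ for $n\ge3$.
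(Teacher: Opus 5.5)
Part (1) and the cases $n=3,5$ of part (2) are sound. They take a more structural route than the paper, which simply reads these facts off the computer-generated list of the $600$ elements of $G_q(\zeta_5)$ and the resulting trace lists. Two remarks on that part:
the step ``$R_q^5|_{q=\zeta_5}$ is scalar, so $\Gamma(5)$ lies in the kernel'' uses the classical fact that, for $n\le 5$, the image of $\Gamma(n)$ in $\PSL(2,\ZZ)$ is the normal closure of $R^n$. Equivalently, the $(2,3,n)$ triangle group has order $|\PSL(2,\ZZ/n\ZZ)|$; you should cite this. Also, your single test case does settle the Borel-versus-$C_5$ dichotomy. Indeed $\cS_{2/5}(q)=\cS_{7/5}(q)=1+q+2q^2+q^3$ gives $\cS_{2/5}(\zeta_5)=\zeta_5^2(1-\zeta_5^2)\neq 0$. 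Meanwhile $M(2,2,3)=\begin{pmatrix}7&-3\\5&-2\end{pmatrix}\equiv\begin{pmatrix}2&2\\0&3\end{pmatrix}\pmod 5$ lies in the Borel but not in $\pm\langle \overline{R}\rangle$.

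The case $n=4$ of part (2), however, does not work as written, for two reasons.

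\emph{The quotient is not $A_4$.} $|G_q(i)|=96$ and the only scalars in $G_q(i)$ are $\pm E_2,\pm iE_2$, so the quotient has order $24$. It is $\PSL(2,\ZZ/4\ZZ)\cong S_4$, which has no quotient isomorphic to $A_4$. So ``handling $n=4$ through the quotient $A_4$'' is not a workable plan.

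\emph{The normalization fails.} Writing $M_q=\pm q^kN$ with $N|_{q=i}\in\SL(2,\CC)$ is impossible whenever $\det M_q=q^m$ with $m$ odd, because $\det(\pm q^kN)=q^{2k}\det N$. For instance $M_q=S_qR_q^4$ has $\Tr M_q=[4]_q$ (so $f(1)=4$) and $\det M_q=q^3$. Such specializations have determinant $\pm i$ and are not scalar multiples of elements of the binary tetrahedral group $G_q(i)\cap\SL(2,\CC)$. Your ``lift to an element of order $4$'' argument never reaches them.

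The repair is short and uniform. Since $R_q^4|_{q=i}=E_2$ and the image of $\Gamma(4)$ in $\PSL(2,\ZZ)$ is the normal closure of $R^4$, you get a surjection $\PSL(2,\ZZ/4\ZZ)\to G_q(i)/\{\pm E_2,\pm iE_2\}$. It is bijective because both groups have order $24$. If $f(1)\in 4\ZZ$, Cayley--Hamilton gives $\overline{M}^{\,2}=-E_2$ in $\SL(2,\ZZ/4\ZZ)$, with $\overline{M}\neq\pm E_2$. Hence $X:=M_q|_{q=i}$ is non-scalar with $X^2=\lambda E_2$. Its minimal polynomial is then $x^2-\lambda$, which forces $\Tr X=0$. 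This elementary observation works equally for $n=3,5$, and makes both the $\SL$-normalization and the binary-polyhedral lifting unnecessary.
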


The following result states that the case $n=6$ still exhibits a certain degree of finiteness.

\begin{thm}[Corollaries~\ref{trace finite} and \ref{flat finite}] Let $\frac{r}s>1$ be an irreducible fraction, and let $J_{\frac{r}s}(q)$ denote the normalized Jones polynomial of the rational link associated with $\frac{r}s$. 
For $\zeta \in \CC^*$, the following are equivalent. 
\begin{itemize}
\item[(1)] $\{ \Tr M \mid M \in G_q(\zeta)\}$ is finite, 
\item[(2)] $\{J_{\frac{r}s}(\zeta) \mid \frac{r}s \in \QQ\}$ is finite, 
\item[(3)] $\zeta=\zeta_n$ for $n=2,3,4,5,6$. 
\end{itemize}
\end{thm}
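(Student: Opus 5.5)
The plan is to prove $(3)\Rightarrow(1)$, then $(1)\Rightarrow(3)$ by contraposition, and finally to tie $(2)$ to $(1)$ through a trace formula for the Jones polynomial.

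\emph{Trace formula.} I would first record, from the existing literature on $q$-rationals and rational links \cite{MO1, KR}, an identity of the form
$$J_{\frac{r}{s}}(q)=\pm q^{k}\,\Tr M_q$$
for a suitable $M_q\in G_q$ built from the continued fraction of $\frac{r}{s}$. The candidate is a combination such as $q\cR_{\frac{r}{s}}(q)+(1-q)\cS_{\frac{r}{s}}(q)$, read off from the entries of the matrix sending $\infty$ to $\frac{r}{s}$. Conversely, every trace should arise this way up to a factor $\pm q^k$. When $\zeta$ is a root of unity, $\pm\zeta^k$ takes only finitely many values, so this identity gives $(1)\Leftrightarrow(2)$ there. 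When $\zeta$ is not a root of unity, I would show directly that both sets are infinite. For traces, $\Tr R_q^k=q^k+1$ already takes infinitely many values. For Jones values, the torus-link family $\frac{r}{s}=\frac{m}{1}$ has $J$ essentially a $q$-integer, which gives infinitely many values.

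\emph{$(3)\Rightarrow(1)$.} For $n=2,3,4,5$ this is immediate, since $G_q(\zeta_n)$ is finite by Theorem~\ref{finite}. The case $n=6$ is where I expect the main difficulty. Every trace is a Laurent polynomial with integer coefficients evaluated at $\zeta_6$, so it lies in $\ZZ[\zeta_6]$, which is a discrete lattice in $\CC$. It therefore suffices to bound $|\Tr M|$ uniformly on $G_q(\zeta_6)$; since $|\det M|=1$ there, this is the natural normalization. I would look for a Hermitian form $H$ with $M^*HM=\det(M)\,H$ for the generators at $|q|=1$, a Burau-type unitarity. At $\zeta_6$ this $H$ should be degenerate positive semidefinite. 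Then every $M$ preserves the line $\ker H$ and acts unitarily on the quotient. Each $M$ is then upper triangular with unimodular diagonal in a suitable basis, so $|\Tr M|\le 2$. If no such form exists, the fallback is to compute explicitly: $R_qS_q$ has order $6$, and one would check that $G_q(\zeta_6)$ is virtually abelian, with an abelian subgroup of unipotent-times-scalar elements.

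\emph{$(1)\Rightarrow(3)$.} Suppose $\zeta$ is not a root of unity; then $\Tr R^k=\zeta^k+1$ already gives infinitely many values. Otherwise $\zeta=\zeta_n$ with $n\ge 7$ (the case $n=1$ is $\SL(2,\ZZ)$, where traces are unbounded). Since traces are integer Laurent polynomials in $q$, the trace set at one primitive $n$-th root is finite if and only if it is finite at every primitive $n$-th root, so I may take $\zeta=e^{2\pi i/n}$. I would use the elements $M=R^aSR^bS$ (with small $a,b$, e.g.\ $a=b=3$), whose traces are expressions like $[a]_q[b]_q$ times $q^{-1}$ minus a unit. For $n\ge7$ these $q$-integers are close to $a,b$, so
$$\frac{|\Tr M|}{\sqrt{|\det M|}}>2.$$
The matrix $M$ is then loxodromic with eigenvalues of distinct moduli, and $|\Tr M^k|\to\infty$, contradicting finiteness. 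The estimate needs checking for small $n$ such as $n=7$, possibly with a different choice of $a,b$; the contrast at $n=6$ is $[2]_{\zeta_6}[2]_{\zeta_6}\zeta_6^{-1}$ of modulus $3$, so the threshold is delicate and must be verified carefully.
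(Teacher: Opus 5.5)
The gap is the bridge between $(1)$ and $(2)$. The identity $J_{\frac rs}(q)=\pm q^k\Tr M_q$ with $M_q\in G_q$ is false.

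\begin{itemize}
\item By Theorem~\ref{Tr A}, every $\Tr M_q$ is palindromic up to $\pm q^k$. But $J_{\frac31}(q)=q\cR_{\frac31}(q)+(1-q)\cS_{\frac31}(q)=1+q^2+q^3$ is not palindromic.
\item The converse also fails: $\Tr S_q=0$ is never $\pm q^kJ_{\frac rs}(q)$, since $J_{\frac rs}(1)=r$.
\end{itemize}

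The combination $q\cR+(1-q)\cS$ is the row vector $(q,\,1-q)$ applied to the first column of $M_q$. Equivalently it is $\Tr(M_qX)$ for the singular matrix $X=\begin{pmatrix} q&1-q\\ 0&0\end{pmatrix}$, not a trace.

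So you have no argument for $(2)\Rightarrow(3)$ at $\zeta_n$ with $n\ge7$, nor for $(3)\Rightarrow(2)$ at $n=6$. For $n=2,3,4,5$, finiteness of $G_q(\zeta_n)$ gives $(3)\Rightarrow(2)$ directly from $J=q\cR+(1-q)\cS$.

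Trace estimates cannot patch this. At $n\ge7$, an unbounded set of columns $(\cR,\cS)$ need not make one fixed functional of them unbounded; that is exactly what happens at $n=6$. You need two independent functionals.

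The paper obtains them from \eqref{Thomas}:
\begin{itemize}
\item $(\flR_\alpha^\vee,\,q^{v(\alpha)}\flS_\alpha^\vee)$ arises from $(\cR_\alpha,\cS_\alpha)$ via a matrix of determinant $\Delta(q)=q^2-q+1$, which is invertible at $\zeta_n$ for $n\ne6$.
\item By Theorem~\ref{J-L} and \eqref{R and S for flat}, the value sets of $J$, $\flS$ and $\flR$ at $\zeta_n$ are finite simultaneously.
\item Applying $\vee$ only conjugates a value and multiplies it by a root of unity.
\end{itemize}
Hence finitely many $J$-values force $\{\cS_{\frac rs}(\zeta_n)\}$ to be finite, contradicting Corollary~\ref{S finite}.

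For $n=6$ the paper quotes the classical special value of the Jones polynomial, but your own setting gives it. We have $(q,1-q)|_{q=-\omega}=-\omega(1,\omega)$. The vector $(1,\omega)$ is a common left eigenvector of $R_{-\omega}$ and $S_{-\omega}$, with eigenvalues $-\omega$ and $\omega$. Therefore $J_{\frac rs}(-\omega)=-\omega\,(1,\omega)M(1,0)^T$ is always a sixth root of unity.

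The rest of your proposal works.
\begin{itemize}
\item At $\zeta_6$, your degenerate Hermitian form is the paper's simultaneous triangularization in disguise. Write the invariance as $M^*HM=H$, since $\det M$ is not real. You still must exhibit the common eigenvector, e.g.\ $(1,-\omega^2)^T$ for $R_{-\omega}$ and $S_{-\omega}$.
\item For $n\ge7$, $(R_q^3S_q)^2$ has trace $[3]_q^2-2q^2$. At $q=e^{2\pi i/n}$ its modulus is $(1+2\cos\frac{2\pi}{n})^2-2$, which equals $2$ at $n=6$ and exceeds $2$ for $n\ge7$. This is a uniform substitute for Lemma~\ref{lemma_j-1}.
\item Your Galois reduction to $\zeta=e^{2\pi i/n}$ is correct.
\end{itemize}
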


The above results naturally lead to the following conjecture.

\begin{conj}\label{[n]_q divides}
For $n \ge 2$, if $\cS_{\frac{r}s}(\zeta_n)=0$, then $[n]_q$  
divides $\cS_{\frac{r}s}(q)$ 
(equivalently, $\cS_{\frac{r}s}(\zeta_n^{\, k})=0$ for all $0 < k <n$). 
In particular, $\cS_{\frac{r}s}(\zeta_n)=0$ implies $s \in n \ZZ$.
\end{conj}

The conjecture is obvious when $n$ is prime, and we will show that it also holds for the composite numbers $n=4,6,8,9,10,15$ and $25$ (Proposition~\ref{evidence}).

For an irreducible fraction $\frac{r}s>1$, the integers $a:=2rs$, $b:=r^2-s^2$ and $c:=r^2+s^2$ form a Pythagorean triple, that is, $a^2+b^2=c^2$. Quantizing the triple $(a,b,c)$, Mathevet, Morier-Genoud and Ovsienko \cite{MMO} defined  $\cA_{\frac{r}s}(q), \cB_{\frac{r}s}(q), \cC_{\frac{r}s}(q) \in \ZZ[q]$. Theorem~\ref{ABC} states that $\cA_{\frac{r}s}(q)$ and $\cB_{\frac{r}s}(q)$ satisfy ``the 2,3,4,5 property" unconditionally, that is,   
\begin{center}
for $n=2,3,4,5$, \quad $\cA_{\frac{r}s}(1) \in n\ZZ$  $\Longleftrightarrow$ $\cA_{\frac{r}s}(\zeta_n)=0$, 
\end{center}
and the same is true for $\cB_{\frac{r}s}(q)$. 
 An analogue of Conjecture~\ref{[n]_q divides} is expected to hold for $\cA_{\frac{r}s}(q)$ and $\cB_{\frac{r}s}(q)$. 


\section{Preliminaries}\label{Pre}
At the beginning, we briefly recall the definition and basic properties of the $q$-deformed rational numbers. For details, see \cite{LM,MO1,MOV}.

An irreducible fraction $\frac{r}{s}>1$ with $r,s \in \NN$ has the negative continued fraction (or the {\it Hirzebruch-Jung continued fraction}) expansions as follows\\
$$
\frac{r}{s}=c_1-\frac{1}{c_2-\cfrac{1}{\ddots-\cfrac{1}{c_k}}}$$
with $c_i\geq 2$ for all $i$. This expansion is denoted by $[[c_1,c_2,\ldots,c_k]]$. 
For the matrices $R=\begin{pmatrix}
1 & 1 \\
0 & 1 \\
\end{pmatrix}$ and 
$S=\begin{pmatrix}
0 & -1 \\
1 & 0 \\
\end{pmatrix}$ given in \S 1,  
a rational number $\frac{r}{s}=[[c_1,\ldots,c_k]]>1$ can be expressed by the following formula:
\begin{equation}\label{R_r_over_s-1}
\frac{r}{s}=M(c_1,\ldots, c_k)\left(\frac{1}{0}\right),  
\quad \text{where $M(c_1,\ldots, c_k):=R^{\, c_1}SR^{\, c_2}S\cdots R^{\, c_{k}}S$.}
\end{equation}

Let $q$ be a formal parameter, for the matrices 
\[\displaystyle
R_q=\begin{pmatrix}
q & 1 \\
0 & 1 \\
\end{pmatrix}, \ \ \ \ 
S_q=\begin{pmatrix}
0 & -q^{-1} \\
1 & 0 \\
\end{pmatrix}, \ \ \ \
\]
we consider the subgroup  
$$G_q :=\<R_q, S_q\> \subset \mathrm{GL}(2,\mathbb{Z}[q^{\pm1}]).$$ Then we have $Z(G_q)=\{\pm q^n E_2 \mid n \in \ZZ\}$, and set 
$$\PSL_q(2,\ZZ):= G_q/Z(G_q).$$

\begin{prop}[Leclere \&  Morier-Genoud {\cite[Proposition 1.1]{LM}}]\label{PSLq} 
With the above notation, the assignment 
$R_q \longmapsto R$, $S_q \longmapsto S$, 
induces an isomorphism $$\mathrm{PSL}_q(2,\mathbb{Z}) \cong \mathrm{PSL}(2,\mathbb{Z}).$$
\end{prop}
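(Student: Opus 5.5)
The plan is to build homomorphisms in both directions and show that they are mutually inverse on generators.

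\emph{From $\PSL_q(2,\ZZ)$ to $\PSL(2,\ZZ)$.} Specializing $q=1$ is a ring homomorphism $\ZZ[q^{\pm1}]\to\ZZ$. It therefore induces a group homomorphism $\varphi\colon G_q\to \GL(2,\ZZ)$ with $\varphi(R_q)=R$ and $\varphi(S_q)=S$. Its image is $\<R,S\>=\SL(2,\ZZ)$. The center $Z(G_q)=\{\pm q^nE_2\}$ is sent into $\{\pm E_2\}$. Hence $\varphi$ descends to a surjective homomorphism
$$\bar\varphi\colon \PSL_q(2,\ZZ)\to \PSL(2,\ZZ),\qquad \bar R_q\mapsto \bar R,\quad \bar S_q\mapsto \bar S.$$

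\emph{From $\PSL(2,\ZZ)$ to $\PSL_q(2,\ZZ)$.} I would use the classical presentation $\PSL(2,\ZZ)\cong C_2*C_3=\<s,u\mid s^2=u^3=1\>$, with $s=\bar S$ and $u=\overline{RS}$; this is standard and I would cite it. It then suffices to check the same two relations in $\PSL_q(2,\ZZ)$, that is, modulo $Z(G_q)$.
\begin{itemize}
\item $S_q^2=-q^{-1}E_2\in Z(G_q)$.
\item Directly, $R_qS_q=\begin{pmatrix}1&-1\\1&0\end{pmatrix}$. This matrix does not involve $q$, has trace $1$ and determinant $1$, so Cayley--Hamilton gives $(R_qS_q)^3=-E_2\in Z(G_q)$.
\end{itemize}
Consequently $s\mapsto \bar S_q$, $u\mapsto \overline{R_qS_q}$ defines a homomorphism $\psi\colon\PSL(2,\ZZ)\to\PSL_q(2,\ZZ)$. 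Since $R=(RS)S^{-1}$, we get $\psi(\bar R)=\bar R_q$. So $\psi$ is surjective, because $\PSL_q(2,\ZZ)$ is generated by $\bar R_q$ and $\bar S_q$.

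\emph{Conclusion.} The composite $\bar\varphi\circ\psi$ fixes the generators $\bar R,\bar S$, so it is the identity and $\psi$ is injective. Being also surjective, $\psi$ is an isomorphism. Its inverse is $\bar\varphi$, which is the map $R_q\mapsto R$, $S_q\mapsto S$ in the statement.

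The only substantive input is the presentation $\PSL(2,\ZZ)\cong C_2*C_3$, which I take as classical. Everything else is the two matrix computations above, together with the fact that $Z(G_q)=\{\pm q^nE_2\}$. That fact is stated earlier in the paper; we only use the inclusion $\{\pm q^nE_2\}\subseteq Z(G_q)$, which holds because scalar matrices are central, and that $\varphi$ maps this set into $\{\pm E_2\}$. So I expect no real obstacle beyond correctly invoking the presentation.
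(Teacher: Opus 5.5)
Your proof is correct. The paper gives no proof of its own and only cites Leclere--Morier-Genoud; your argument is the standard one. The relations $S_q^2=-q^{-1}E_2$ and $(R_qS_q)^3=-E_2$ give a surjection from $C_2*C_3\cong\PSL(2,\ZZ)$ onto $\PSL_q(2,\ZZ)$, and specializing at $q=1$ gives a left inverse.

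One small correction to your closing remark concerns which inclusion you need. Well-definedness of $\bar\varphi$ on $G_q/Z(G_q)$ requires $\varphi(Z(G_q))\subseteq\{\pm E_2\}$. That follows from the \emph{reverse} inclusion $Z(G_q)\subseteq\{\pm q^nE_2\}$, not from $\{\pm q^nE_2\}\subseteq Z(G_q)$. You can also get it for free: $\varphi$ maps $G_q$ onto $\SL(2,\ZZ)$, so it sends central elements into $Z(\SL(2,\ZZ))=\{\pm E_2\}$. Moreover, if you run your argument with $N=\{\pm q^nE_2\}$ in place of $Z(G_q)$, you get $G_q/N\cong\PSL(2,\ZZ)$. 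Since $\PSL(2,\ZZ)$ has trivial center, this recovers $Z(G_q)=N$ as well.
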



For a sequence of integers $c_1, \ldots, c_k$, we set 
$$M_q(c_1,\ldots,c_k):=R_q^{\, c_1}S_q R_q^{\, c_2}S_q\cdots R_q^{\, c_k} S_q \in G_q \subset  \mathrm{GL}(2,\mathbb{Z}[q^{\pm 1}]).$$ 

\begin{defn}[\cite{MO1,MOV}]\label{def of q-deform}
{\normalfont
For an irreducible fraction $\frac{r}{s}=[[c_1,\ldots,c_k]]>1$, 
then the $q$-deformed rational number $\left[\displaystyle\frac{r}{s}\right]_q$ is defined by
$$
\left[\displaystyle\frac{r}{s}\right]_q= \frac{\cR_{\frac{r}s}(q)}{\cS_{\frac{r}s}(q)}, \quad 
\text{where} \quad M_q(c_1,\ldots, c_k)=\begin{pmatrix}
  \cR_{\frac{r}s}(q) & *\\
    \cS_{\frac{r}s}(q) & *
\end{pmatrix}.
$$
Hence, as a $q$-deformation of \eqref{R_r_over_s-1}, we have 
\[\displaystyle
\left[\displaystyle\frac{r}{s}\right]_q=M_q(c_1,\ldots, c_k)\left(\frac{1}{0}\right).
\]
}
\end{defn}

 We have $\mathcal{R}_{\frac{r}{s}}(q),\  \mathcal{S}_{\frac{r}{s}}(q)  \in \ZZ_{\ge 0}[q]$ with  $\mathcal{R}_{\frac{r}{s}}(1)=r$, $\mathcal{S}_{\frac{r}{s}}(1)=s$ and  $\mathcal{R}_{\frac{r}{s}}(0)=\mathcal{S}_{\frac{r}{s}}(0)=1$. 
These polynomials, which are in fact monics, can be seen as the rank polynomials of {\it fence posets} (cf. \cite{OEK2}).  
Therefore, results on $\mathcal{S}_{\frac{r}{s}}(q)$ inherit a combinatorial flavor.

The definition of the $q$-deformed rational number $\displaystyle\left[\frac{r}{s}\right]_q$ can be extended to the case where $\frac{r}{s} \leq 1$ including negative rational numbers by the following formulas (we always assume that $s \ge 1$):
\begin{equation}\label{TRANS}
\left[\frac{r}{s} +1\right]_q=q\left[\frac{r}{s}\right]_q+1.
\end{equation}
In particular, we set
$\displaystyle
\left[\frac{0}{1}\right]_q:=\frac{0}{1}$ and  
$\displaystyle
\left[\frac{1}{0}\right]_q:=\frac{1}{0}$.

In order to uniquely determine $\cR_{\frac{r}{s}}(q)$ and $\mathcal{S}_{\frac{r}{s}}(q)$ from 
 $$
\left[\frac{r}{s}\right]_q=\frac{\mathcal{R}_{\frac{r}{s}}(q)}{\mathcal{S}_{\frac{r}{s}}(q)}, 
$$
we impose the following conditions:
\begin{itemize}
\item $\mathcal{R}_{\frac{r}{s}}(q)$ and $\mathcal{S}_{\frac{r}{s}}(q)$ are coprime (Laurent) polynomials with integer coefficients.  
\item $\mathcal{S}_{\frac{r}{s}}(q) \in \ZZ[q]$ with $\mathcal{S}_{\frac{r}{s}}(0)=1$. 
\end{itemize}

Note that $\mathcal{S}_{\frac{r}{s}+n}(q)=\mathcal{S}_{\frac{r}{s}}(q)$ for all $n \in \ZZ$. 
For $\frac{r}s<0$, $\cR_{\frac{r}s}(q) \in \ZZ_{\le 0}[q^{-1}]$. Similarly, for $0 < \frac{r}s<1$, we have $\cR_{\frac{r}s}(q) \in \ZZ_{\ge 0}[q]$ with $\cR_{\frac{r}s}(0)=0$. 
For a general $\frac{r}s$, we have 
\begin{equation}\label{r and s}
\cR_{\frac{r}s}(q)=\pm q^n \cS_{-\frac{s}r}(q) \qquad (\exists n \in \ZZ),   
\end{equation}
and hence 
\begin{equation}\label{R and S}
\left \{ \cS_{\frac{r}s}(q) \, \middle | \,  \frac{r}s \in \QQ \right \}= 
\left \{ \cS_{\frac{r}s}(q) \, \middle | \,  \frac{r}s >1 \right \}=
\left\{ \cR_{\frac{r}s}(q) \, \middle | \,   \frac{r}s >1 \right \}.
\end{equation}

While the following fact is already implicit in earlier works of Morier-Genoud and Ovsienko, it was made explicit in  \cite{RY}.

\begin{prop}[{c.f. \cite[Corollary~3.3]{RY}}]\label{column of a general element} 
For 
$$\begin{pmatrix}
\cR(q) & \mathcal{V}(q) \\
\cS(q) &  \mathcal{U}(q) \\
\end{pmatrix} \in \mathrm{PSL}_{q}(2,\mathbb{Z})
$$   
with $r:=\cR(1)$, $s:=\cS(1)$, $v:=\mathcal{V}(1)$ and 
$u:=\mathcal{U}(1)$, we have 
$$
\left[\displaystyle\frac{r}{s}\right]_q= \frac{\cR(q)}{\cS(q)}  \quad \text{and} \quad  \left[\displaystyle\frac{v}{u}\right]_q= \frac{\mathcal{V}(q)}{\mathcal{U}(q)}. 
$$ 
\end{prop}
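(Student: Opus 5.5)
The plan is to show that the map $x \mapsto [x]_q$ on $\QQ\cup\{\frac{1}{0}\}$ intertwines the action of $\PSL(2,\ZZ)$ by M\"obius transformations with the action of $\PSL_q(2,\ZZ)$ by $q$-M\"obius transformations. Concretely, I would prove that for every $M_q\in G_q$ with image $M\in\PSL(2,\ZZ)$ under Proposition~\ref{PSLq}, and every $x\in\QQ\cup\{\frac10\}$,
$$M_q\bigl([x]_q\bigr)=[M(x)]_q .$$
Both sides are unchanged if $M_q$ is multiplied by a central element $\pm q^nE_2$. So the statement is well posed on $\PSL_q(2,\ZZ)$, and the ratios $\cR(q)/\cS(q)$ and $\mathcal V(q)/\mathcal U(q)$ do not depend on the chosen representative. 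Given the intertwining identity, the proposition follows by evaluating at the two base points. Indeed, $M_q\bigl(\frac{1}{0}\bigr)=\cR(q)/\cS(q)$ and $M_q\bigl(\frac01\bigr)=\mathcal V(q)/\mathcal U(q)$. Moreover $[\frac10]_q=\frac10$ and $[\frac01]_q=\frac01$, while $M(\frac10)=r/s$ and $M(\frac01)=v/u$ because specializing $q=1$ sends $M_q$ to $\pm M$.

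Since $G_q$ is generated by $R_q$ and $S_q$, the identity reduces by induction on word length to the two generators and their inverses; the inverses follow formally once the identity holds for $R_q$ and $S_q$. For $R_q$ we have $R_q(y)=qy+1$ and $R(x)=x+1$, so the required identity is exactly the translation rule \eqref{TRANS}. For $S_q$ we have $S_q(y)=-q^{-1}/y$ and $S(x)=-1/x$, so we need the inversion rule
$$\Bigl[-\frac{1}{x}\Bigr]_q=-\frac{q^{-1}}{[x]_q}.$$
This is essentially the content of \eqref{r and s}, which gives $\cR_{\frac rs}=\pm q^n\cS_{-\frac sr}$, but with the exact unit pinned down. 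The degenerate points $x=0,\frac10$ are checked directly.

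The main obstacle is establishing the inversion rule with the correct power of $q$ and sign, uniformly for all $x$, including negative values and values in $(0,1)$.

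\begin{itemize}
\item For $x=\frac rs>1$ with $x=[[c_1,\dots,c_k]]$, I would use Definition~\ref{def of q-deform}. Using $R_q^{\,c}=\begin{pmatrix} q^c & [c]_q\\ 0&1\end{pmatrix}$ together with \eqref{TRANS}, one computes $-1/x$ as $S$ applied to $x$. The $q$-deformation of $-1/x$ is then reached via \eqref{TRANS} from a value in $(-1,0)$. The task is to compare the continued fraction of $x$ with that of $1-1/x=[[2,c_1,\ldots]]$-type expansions and compute the resulting matrix products.
\item For the remaining ranges of $x$, I would bootstrap from the case $x>1$ using \eqref{TRANS} repeatedly.
\item If this case analysis becomes messy, my first fallback is to cite the corresponding statement from Morier-Genoud--Ovsienko \cite{MO1,MOV}, where $[\,\cdot\,]_q$ is shown to be $\PSL_q(2,\ZZ)$-equivariant in exactly this form.
\end{itemize}
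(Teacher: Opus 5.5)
The paper gives no argument of its own here; it simply cites \cite[Corollary~3.3]{RY}. Your overall route is the standard mechanism behind that citation: prove $M_q([x]_q)=[M(x)]_q$ on the generators, extend by induction on word length, then evaluate at $\frac10$ and $\frac01$. The reduction to $R_q,S_q$ and their inverses, the independence of the central factor, and the evaluation step are all fine. The gap is in the step you flag yourself, the inversion rule $[-\frac1x]_q=S_q([x]_q)$.

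For $x>1$ your idea works once corrected. It is $2-\frac1x$, not $1-\frac1x$, that equals $[[2,c_1,\dots,c_k]]$. So $[2-\frac1x]_q=R_q^2S_q([x]_q)$ by Definition~\ref{def of q-deform}, and two applications of \eqref{TRANS} give the rule. At $x=1$ the rule is checked directly: $[-1]_q=-q^{-1}=S_q([1]_q)$. Since $S_q^2=-q^{-1}E_2$ acts trivially, the rule at $x$ is equivalent to the rule at $-\frac1x$, so you also get $-1\le x<0$.

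The range $0<x<1$ (equivalently $x<-1$) cannot be reached by ``bootstrapping with \eqref{TRANS}'', because translating $x$ does not translate $-\frac1x$. In fact the ingredients you list are formally insufficient. Replace $S_q$ by $S$ throughout while keeping $R_q$. Then the continued-fraction definition, \eqref{TRANS}, the $x>1$ computation and the involution argument all go through verbatim. Yet the resulting deformation has $[\frac12]'=\frac1{1+q}$ and $[-2]'=-\frac{1+q}{q^2}$, whereas $S$ sends $\frac1{1+q}$ to $-(1+q)$.

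The missing input is the relation that distinguishes $S_q$: $(R_qS_q)^3=-E_2$ (note $R_qS_q=RS$), equivalently $S_qR_qS_q=q\,R_q^{-1}S_qR_q^{-1}$. With it, the case $0<x=\frac rs<1$ goes by induction on $s$:
\begin{itemize}
\item Write $x+1=[[2,c_2,\dots,c_k]]$ and $y:=[[c_2,\dots,c_k]]=\frac{s}{s-r}>1$. Then $[x]_q=R_qS_q([y]_q)$ by the definition and \eqref{TRANS}.
\item We have $-\frac1x=-1-\frac1{y-1}$, and $y-1=\frac{r}{s-r}$ is either $\ge1$ (already covered) or lies in $(0,1)$ with smaller denominator (covered by induction).
\end{itemize}
Hence
\[
[-\tfrac1x]_q=R_q^{-1}S_q([y-1]_q)=R_q^{-1}S_qR_q^{-1}([y]_q)=S_qR_qS_q([y]_q)=S_q([x]_q),
\]
and $x<-1$ then follows from the involution.

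With this supplement the rest of your proof is complete. Alternatively, your fallback of citing the equivariance from \cite{MO1,MOV} is legitimate and puts you on the same footing as the paper's citation.
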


For a polynomial $f(q)\in\mathbb{Z}[q]$, we set \[ f(q)^{\vee}:=q^{\mathsf{deg}(f)}f(q^{-1}) \in \ZZ[q].\]  
If $f(0) \ne 0$, we have $(f(q)^\vee)^\vee=f(q)$. 
We say $f(q)$ is  {\it palindromic} if $f(q)=f(q)^{\vee}$. For $\frac{r}s>1$, we have
\begin{equation}\label{s/r full}
\cR_{\frac{s}r}(q)= q^d \cS_{\frac{r}s}(q^{-1}) \quad \text{and} \quad   \cS_{\frac{s}r}(q)= q^d \cR_{\frac{r}s}(q^{-1})  \quad   \text{for} \quad  d := \deg \cR_{\frac{r}s}(q), 
\end{equation}
in particular, 
\begin{equation}\label{s/r}
\cS_{\frac{s}r}(q)=\cR_{\frac{r}s}(q)^\vee.    
\end{equation}

\begin{thm}[Leclere \&  Morier-Genoud {\cite[Theorem~3]{LM}}]\label{Tr A}
For $M_q \in G_q$, the trace $\Tr M_q$ is a palindromic polynomial whose coefficients are non-negative up to a multiple of $\pm q^n$ for some $n \in \ZZ$. 
\end{thm}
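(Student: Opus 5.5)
The statement has two parts: palindromicity up to a monomial, and nonnegativity of the coefficients up to $\pm q^n$. I would treat them separately, and both arguments are insensitive to multiplying $M_q$ by a central element $\pm q^nE_2$.

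\textbf{Palindromicity.} The plan is to build an anti-automorphism of $G_q$ that intertwines $q\mapsto q^{-1}$ with transposition. Put $J=\begin{pmatrix}0&1\\1&0\end{pmatrix}$ and $D=\mathrm{diag}(q,1)$, and define
$$\varphi(M(q)):=D^{-1}J\,M(q^{-1})^{T}\,J D.$$
A direct computation gives $\varphi(R_q)=q^{-1}R_q$ and $\varphi(S_q)=qS_q$. Since $\varphi$ reverses products, for a word $w=X_1\cdots X_k$ in $R_q^{\pm1},S_q^{\pm1}$ we get $\varphi(w)=q^{m}\,X_k\cdots X_1$ for some $m\in\ZZ$. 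Taking traces, and using that transposition and conjugation preserve trace, yields
$$\Tr w(q^{-1})=q^{m}\,\Tr(X_k\cdots X_1).$$
It then remains to compare $\Tr(w^{\mathrm{rev}})$ with $\Tr(w)$. For this I would find an anti-automorphism at fixed $q$ of the form $M\mapsto K M^{T}K^{-1}$ that sends $R_q$ and $S_q$ to scalar multiples of themselves. I would look for $K$ by solving the linear conditions $KR_q^{T}=R_qK$ and $KS_q^{T}=\lambda S_qK$; since $R_q$ has distinct eigenvalues $q,1$, $R_q^T$ is conjugate to $R_q$, so such a $K$ plausibly exists. Granting it, $\Tr(w^{\mathrm{rev}})=\pm q^{m'}\Tr(w)$. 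Hence $\Tr w(q^{-1})=\pm q^{m''}\Tr w(q)$, which is exactly palindromicity up to $\pm q^n$.

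If no such $K$ exists, the fallback uses the Cayley--Hamilton identity $\Tr M^{-1}=\Tr M/\det M$ for $2\times 2$ matrices, together with the fact that $\det$ of any word is a power of $q$. I would try to realize $w^{\mathrm{rev}}$ as a conjugate of $w^{-1}$ by passing to the generators $S_q$ and $T_q:=R_qS_q$, which have order $2$ and $3$ in $\PSL_q(2,\ZZ)$.

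\textbf{Positivity.} Trace is a class function, and conjugating inside $G_q$ only changes the trace by $\pm q^n$. By Proposition~\ref{PSLq}, conjugacy classes in $G_q/Z(G_q)$ correspond to those of $\PSL(2,\ZZ)$. So it suffices to check one representative per class. Elliptic classes are those of $S$, $RS$ and $(RS)^2$; parabolic classes are those of $R^{\pm n}$. Both are finite lists, or explicit families like $R_q^n$, and I would compute their traces directly: for example $\Tr R_q^n=q^n+1$ and $\Tr S_q=0$. Every hyperbolic class contains a positive word $R^{a_1}L^{b_1}\cdots R^{a_k}L^{b_k}$ with $a_i,b_i\ge1$, where $L=\begin{pmatrix}1&0\\1&1\end{pmatrix}$. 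I would take a $q$-lift $L_q\in G_q$ of $L$, for instance $L_q=S_q^{-1}R_q^{-1}S_q$ up to a scalar, and check that after multiplying by a suitable $\pm q^n$ it has entries in $\ZZ_{\ge0}[q^{\pm1}]$. Then the lifted word is a product of matrices with nonnegative Laurent-polynomial entries, so its trace has nonnegative coefficients.

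The main obstacle I expect is making this sign and scalar bookkeeping uniform. One must show that $R_q$ and the chosen $L_q$ are simultaneously nonnegative after a single normalization, and that the lift of a positive word agrees with the given element up to the center. This should follow from Proposition~\ref{PSLq}.
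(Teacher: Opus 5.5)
The paper does not prove this statement; it quotes it from \cite{LM}, so there is no in-text argument to compare with. Judged on its own, your argument is correct. The two points you left open are both true and close by short computations.

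\textbf{Palindromicity.} Your formulas $\varphi(R_q)=q^{-1}R_q$ and $\varphi(S_q)=qS_q$ are right. The matrix $K$ you were looking for exists:
\[
K=\begin{pmatrix} q-1 & q\\ q & q(1-q)\end{pmatrix},\qquad KR_q^{T}=R_qK,\qquad KS_q^{T}=S_qK,\qquad \det K=-q(q^2-q+1).
\]
It is invertible over $\QQ(q)$ but not over $\ZZ[q^{\pm1}]$, which is irrelevant for traces. Hence $M\mapsto KM^{T}K^{-1}$ fixes $R_q$ and $S_q$, so $w^{\mathrm{rev}}=Kw^{T}K^{-1}$ and $\Tr w(q^{-1})=q^{m}\Tr w(q)$ with no sign.

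Remove the ``$\pm$'' from your write-up, because the sign matters: $f(q^{-1})=-q^{m}f(q)$ would make $f$ anti-palindromic. In your ansatz, determinants force $\lambda=\pm1$, and $\lambda=-1$ forces $K$ to be singular. Positivity would also exclude the minus sign.

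Drop the fallback; taken literally it fails. Already at $q=1$, $w^{\mathrm{rev}}$ and $w^{-1}$ differ by conjugation by $\mathrm{diag}(-1,1)\notin\SL(2,\ZZ)$. For example, $R^{\mathrm{rev}}=R$ is not conjugate to $R^{-1}$ in $\PSL(2,\ZZ)$.

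As an aside, among $q\in\CC^*$ the matrix $K$ degenerates exactly at $q=\zeta_6$. There its image is spanned by $(1,-\omega^2)^{T}$, the common eigenvector (first column of $P$) in the proof of Theorem~\ref{TR-omega}.

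\textbf{Positivity.} This part works as planned, with two small corrections.

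First, conjugation inside $G_q$ does not change the trace at all. The factor $\pm q^n$ comes from lifting along $G_q\to G_q/Z(G_q)\cong\PSL(2,\ZZ)$. Injectivity in Proposition~\ref{PSLq} says exactly that two lifts of the same element differ by some $\pm q^nE_2$, which is all that ``agrees up to the center'' needs.

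Second, include the identity class, whose trace is $2$.

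The normalization obstacle you anticipate does not arise. Take
\[
L_q:=q\,S_q^{-1}R_q^{-1}S_q=\begin{pmatrix} q&0\\ q&1\end{pmatrix},
\]
which lies in $G_q$ since $qE_2\in Z(G_q)$, and which specializes to $L$ at $q=1$. Both $R_q$ and $L_q$ have entries in $\ZZ_{\ge0}[q]$, so every product $R_q^{a_1}L_q^{b_1}\cdots R_q^{a_k}L_q^{b_k}$ does too.

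The remaining input is that every hyperbolic class of $\PSL(2,\ZZ)$ contains such a word with all exponents $\ge1$. This is classical (periodic continued fractions, or reduced forms) and should simply be cited. The other representatives give $\Tr R_q^{\,n}=q^n+1$, $\Tr S_q=0$, $\Tr R_qS_q=1$ and $\Tr (R_qS_q)^2=-1$.
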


The rank polynomial of a {\it circular fence poset} equals the trace  $\Tr M_q$ of some $M_q \in G_q$ (see \cite{OEK1}). 
In this paper, we will give several results on $\Tr M_q$, and  they have a combinatorial side via the above interpretation.

\medskip

 Bapat, Becker and Licata \cite{BBL} gave another $q$-deformation $$\left[\displaystyle\frac{r}{s}\right]^\flat_q=\frac{\flR_{\frac{r}{s}}(q)}{\flS_{\frac{r}{s}}(q)}$$ of $\frac{r}s$. As before, we have $\flS_{\frac{r}{s}}(q) \in \ZZ_{\ge 0}[q]$ with $\flS_{\frac{r}{s}}(0)=1$.  
 To emphasize the contrast with $\left[\displaystyle\frac{r}{s}\right]^\flat_q$, one should denote the original deformation $\left[\displaystyle\frac{r}{s}\right]_q$ by  $\left[\displaystyle\frac{r}{s}\right]_q^\sharp$; however, for simplicity, we just write $\left[\displaystyle\frac{r}{s}\right]_q$ throughout this paper. 
 The same is true for $\cR_{\frac{r}s}(q)$ and $\cS_{\frac{r}s}(q)$. 

A presentation of $\left[\dfrac{r}s \right]^\flat_q$ directly using $\left[\dfrac{r}s \right]_q$  was given  by Thomas \cite{T24}, subsequently refined by Jouteur \cite{J25}, and the second and third authors of the present paper. The following is found in ``another proof of Theorem 4.2'' of \cite{RY}. 
For $\frac{r}s=\alpha>1$, we have 
\begin{equation}\label{Thomas}
\begin{pmatrix}
\flR_\alpha(q)^\vee \\
q^{v(\alpha)} \flS_\alpha(q)^\vee
\end{pmatrix}
=
\begin{pmatrix}
q & 1-q \\
q-1 &1
\end{pmatrix}
\begin{pmatrix}
\cR_\alpha(q) \\
\cS_\alpha(q)
\end{pmatrix}, 
\end{equation}
where $v(\alpha)$ is the maximam number $n$ such that $x^n$ divides $(q-1)\cR_\alpha(q)+\cS_\alpha(q)$.  


 The equations corresponding to \eqref{r and s}, \eqref{R and S} and \eqref{s/r} also hold for $\flR_{\frac{r}s}(q)$ and $\flS_{\frac{r}s}(q)$ (c.f. \cite[Proposition 2.11.]{RY}). Especially, we have 
\begin{equation}\label{R and S for flat}
\left \{ \flS_{\frac{r}s}(q) \, \middle | \,  \frac{r}s \in \QQ \right \}=
\left \{ \flS_{\frac{r}s}(q) \, \middle | \,  \frac{r}s >1 \right \}=
\left\{ \flR_{\frac{r}s}(q) \, \middle | \,   \frac{r}s >1 \right \}.
\end{equation}

An irreducible fraction $\frac{r}s >1$ determines a {\it rational link} (or {\it 2-bridge link}) $L({\frac{r}s})$. For more details, see \cite[\S4]{KR} and the references cited therein.
As a useful isotopy invariant for an oriented link $L$ in $\mathbb{S}^3$, the Jones polynomial $V_L(t)~\in \ZZ[t^{\pm 1}]\cup t^{\frac12}\ZZ[t^{\pm 1}]$ is well-studied. 
Lee and Schiffler~\cite[Proposition 1.2 (b)]{KR} introduced the following normalization $J_{{\frac{r}s} }(q)$ of the Jones polynomial $V_{L({\frac{r}s} )}(t)$ of a rational link  $L({\frac{r}s} )$: 
\begin{equation}\label{def normalized Jones}
 J_{{\frac{r}s} }(q):=\pm t^{h}V_{L({\frac{r}s})}(t)|_{t=-q^{-1}},
\end{equation}
where $\pm t^{h}$ is the leading term of $V_{L({\frac{r}s})}(t)$, and the sign is chosen so that the constant term of $J_{\frac{r}s}(q)$ is 1.

\begin{thm}[Morier-Genoud \& Ovsienko {\cite[Proposition A.1]{MO1}}, Bapat, Becker \& Licata {\cite[Theorem~A3]{BBL}}] \label{J-L}
For a rational number ${\frac{r}s}>1$, we have 
\begin{equation}\label{Jones and flat}
J_{\frac{r}s}(q)=q \cR_{\frac{r}s}(q)+(1-q)\cS_{\frac{r}s}(q)=\flR_{\frac{r}s} (q)^\vee=\flS_{\frac{s}r}(q).
\end{equation}
\end{thm}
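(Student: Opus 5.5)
The middle and last equalities in \eqref{Jones and flat} follow from material already recalled in this section. The first row of \eqref{Thomas} reads $\flR_{\frac{r}s}(q)^\vee = q\cR_{\frac{r}s}(q)+(1-q)\cS_{\frac{r}s}(q)$, which is exactly the middle equality. For the last one, I would use the flat analogue of \eqref{s/r}, available by \cite[Proposition 2.11]{RY}: for $\frac{r}s>1$ it gives $\flS_{\frac{s}r}(q)=\flR_{\frac{r}s}(q)^\vee$. So the real content is the first equality, $J_{\frac{r}s}(q)=q\cR_{\frac{r}s}(q)+(1-q)\cS_{\frac{r}s}(q)$.

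For that equality I would argue by induction along the negative continued fraction $\frac{r}s=[[c_1,\ldots,c_k]]$, following the matrix product $M_q(c_1,\ldots,c_k)$.

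\begin{itemize}
\item First, I would realize $L(\frac{r}s)$ as the numerator (or denominator) closure of a rational tangle $T(c_1,\ldots,c_k)$. This tangle is built by alternately adding $c_i$ horizontal half-twists and then rotating, the rotation corresponding to $S$.
\item Next, I would write the Kauffman bracket of the tangle as $\langle T\rangle = f\,\langle 0\rangle + g\,\langle\infty\rangle$ in the two-dimensional skein module of the $4$-punctured disk. Adding one twist acts on the pair $(f,g)$ by a $2\times 2$ matrix with entries in $\ZZ[A^{\pm1}]$, and rotation swaps the two basis tangles.
\item Then I would substitute $A^{-4}=t=-q^{-1}$. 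The claim to check is that, up to scalars $\pm A^m$, these two matrices become conjugate by a fixed matrix to $R_q$ and $S_q$. It follows that the coefficient vector of $T$ is a fixed linear image of the first column $(\cR_{\frac{r}s},\cS_{\frac{r}s})^T$ of $M_q(c_1,\ldots,c_k)$.
\item Finally, taking the closure pairs this vector with the brackets of the closures of $\langle 0\rangle$ and $\langle\infty\rangle$ (one loop versus two loops, i.e. the values $1$ and $-A^2-A^{-2}$). After the writhe correction and the substitution, this yields a combination $\alpha\cR_{\frac{r}s}+\beta\cS_{\frac{r}s}$ up to a unit $\pm q^n$.
\end{itemize}

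To identify $\alpha=q$ and $\beta=1-q$, I would check small cases. For $\frac{r}s=\frac{n}1$, the link is the $(2,n)$ torus link, whose Jones polynomial is known, and one compares it with $\cR_{\frac n1}=[n]_q$ and $\cS_{\frac n1}=1$. Then $\frac{r}s=\frac 32$ serves as a check on the rotation step. Once the conjugating matrix is fixed, the linear form is forced. The normalization in \eqref{def normalized Jones} (constant term $1$, leading term removed) absorbs the unit $\pm q^n$, because $q\cR_{\frac{r}s}+(1-q)\cS_{\frac{r}s}$ has constant term $\cS_{\frac{r}s}(0)=1$.

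The main obstacle will be the bookkeeping of signs, powers of $A$, and orientation/writhe across the induction. The writhe depends on the orientation of the closure, which changes with the parity of $r$ and $s$. To handle this, I would work with the unoriented bracket throughout and use that $J$ is normalized only up to $\pm t^h$, so every monomial ambiguity is killed at the end. The remaining check is that no non-monomial discrepancy arises, which the base cases pin down.
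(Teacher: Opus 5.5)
The paper does not prove Theorem~\ref{J-L}. It quotes the first two equalities from \cite[Proposition A.1]{MO1} and \cite[Theorem~A3]{BBL}, and only remarks that the last one is the $\flat$-version of \eqref{s/r}. Your treatment of the last equality is exactly that remark. Reading the middle one off the first row of \eqref{Thomas} is legitimate within the paper, although it only trades one imported result for another.

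The genuine difference is the first equality, which you prove instead of citing, and the Kauffman-bracket route works. In fact the linear form is computed directly, with no fitting to small cases:
\begin{itemize}
\item Take $q=-A^4$ and $\delta=-A^2-A^{-2}$, and expand a crossing as $A^{-1}\langle 0\rangle+A\langle\infty\rangle$.
\item In the basis $(\langle 0\rangle,\langle\infty\rangle)$ the twist acts by $\mathcal{T}=\bigl(\begin{smallmatrix}A^{-1}&0\\ A&-A^{3}\end{smallmatrix}\bigr)$ and the rotation by the swap $\sigma$.
\item One checks $A\mathcal{T}=PR_qP^{-1}$ and $A^{-2}\sigma=PS_qP^{-1}$ for $P=\bigl(\begin{smallmatrix}0&A^{-2}\\ 1&0\end{smallmatrix}\bigr)$, and $P^{-1}$ sends the vector of $\langle\infty\rangle$ to $e_1$.
\item Hence $\langle T\rangle=A^{m}\bigl(\cR_{\frac rs}\langle\infty\rangle+A^{-2}\cS_{\frac rs}\langle 0\rangle\bigr)$.
\item The numerator closure has values $(\delta,1)$, so it gives $A^{m}q^{-1}\bigl(q\cR_{\frac rs}+(1-q)\cS_{\frac rs}\bigr)$. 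Every other factor is a monomial, so the normalization \eqref{def normalized Jones} finishes the proof.
\end{itemize}
Compared with the citation, this gives a self-contained argument that realizes $R_q,S_q$ as the bracket action of twist and rotation on the two-dimensional skein module. As a bonus, the denominator closure, with values $(1,\delta)$, gives $(q-1)\cR_{\frac rs}+\cS_{\frac rs}$ up to a unit, which is the right-hand side of the second row of \eqref{Thomas}.

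One step in your sketch is too quick, and it is not mere bookkeeping. Conjugacy of the two matrices to $(R_q,S_q)$ up to scalars does not by itself make the coefficient vector a linear image of the first column. You also need $\langle\infty\rangle$ to correspond to $e_1$, the eigenvector of $R_q$ for the eigenvalue $q$, and this pins down the chirality.

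With the other crossing, $A\langle 0\rangle+A^{-1}\langle\infty\rangle$, the pair is still conjugate to $(R_q,S_q)$ up to the scalars $-A^{3}$ and $-A^{-2}$. However, $\langle\infty\rangle$ then goes to the eigenvector $(1,1-q)^{T}$ for the eigenvalue $1$. The numerator closure comes out as $\cR+(1-q)\cV$ up to a unit, where $\cV$ is the $(1,2)$-entry of $M_q(c_1,\ldots,c_k)$. For $\frac31$ this is $1+q+q^3=J_{\frac31}(q)^\vee$.

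So mirror image, i.e.\ $J$ versus $J^\vee$, is not one of the monomial ambiguities that the normalization removes. Contrary to your closing remark, the base cases do not kill it after the fact. You must fix the crossing convention, and use the numerator closure, before running the induction, by checking against $J_{\frac n1}(q)=1+q^2+\cdots+q^n$.
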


Here the last equality of \eqref{Jones and flat} follows from the $\flat$ version of \eqref{s/r} (c.f. \cite[Proposition 2.11.(3)]{RY}). Finally, we remark that our study in this paper is closely related to the following historical result. See the classical book \cite{Kl} of F. Klein.  

\begin{thm}[Classification of  finite subgroups of $\SL(2,\CC)$]\label{finite SL(2-C)}
The finite subgroups of $\SL(2,\CC)$, up to conjugacy, are precisely: the cyclic groups $C_n$, the binary dihedral groups (or dicyclic groups) $\Dic_n$ of order $4n$, the binary tetrahedral group (isomorphic to $\SL(2,\FF_3)$) of order $24$, the binary octahedral group of order $48$, and the binary icosahedral group (isomorphic to $\SL(2,\FF_5)$) of order $120.$
\end{thm}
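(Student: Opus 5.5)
The plan follows Klein's classical argument: reduce to the unitary group, pass to rotations of the sphere, classify finite rotation groups by a counting argument, and lift back. Let $G \subset \SL(2,\CC)$ be finite. First I make $G$ unitary. Averaging the standard Hermitian form, $\langle x,y\rangle_G:=\sum_{g\in G}\langle gx,gy\rangle$, gives a $G$-invariant positive definite Hermitian form. Changing basis to one that is orthonormal for this form conjugates $G$ into $\mathrm{SU}(2)$, so up to conjugacy $G \subset \mathrm{SU}(2)$. Next I use the double cover $\pi:\mathrm{SU}(2)\to \mathrm{SO}(3)$, with kernel $\{\pm E_2\}$, coming from the conjugation action on trace-zero skew-Hermitian matrices. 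Then $\bar G:=\pi(G)$ is a finite subgroup of $\mathrm{SO}(3)$.

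The central step, and the one I expect to carry the real content, is classifying finite $\bar G\subset \mathrm{SO}(3)$ of order $N>1$. Each nontrivial element is a rotation fixing exactly two antipodal \emph{poles} on $\mathbb S^2$. Let $P$ be the set of all poles. Counting pairs $(g,p)$ with $g\neq 1$ and $gp=p$ in two ways gives $2(N-1)=\sum_{p\in P}(|\bar G_p|-1)$. Grouping the poles into $\bar G$-orbits $O_1,\dots,O_k$ with stabilizer orders $n_i\ge2$, and using $|O_i|=N/n_i$, this becomes
$$2-\frac{2}{N}=\sum_{i=1}^{k}\Bigl(1-\frac{1}{n_i}\Bigr).$$
Since each summand lies in $[1/2,1)$ and the left side lies in $[1,2)$, we get $k=2$ or $k=3$. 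If $k=2$, then $n_1=n_2=N$ and $\bar G$ is cyclic. If $k=3$, the condition $\sum 1/n_i>1$ forces $(n_1,n_2,n_3)$ to be one of $(2,2,m)$, $(2,3,3)$, $(2,3,4)$, $(2,3,5)$, with $N=2m,12,24,60$ respectively. Looking at the orbit of smallest size identifies $\bar G$, up to conjugacy, as the rotation group of a regular $m$-gon (dihedral), tetrahedron, octahedron, or icosahedron. The identification-up-to-conjugacy step needs the most care: one shows the small orbit forms the vertices of the corresponding regular polygon or polyhedron.

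Finally I lift back to $\mathrm{SU}(2)$. The only element of order $2$ in $\mathrm{SU}(2)$ is $-E_2$, since its eigenvalues $\lambda,\lambda^{-1}$ must both equal $-1$. So if $|G|$ is even, Cauchy's theorem gives $-E_2\in G$ and $G=\pi^{-1}(\bar G)$. If $|G|$ is odd, then $G\cap\ker\pi$ is trivial, so $G\cong\bar G$ has odd order; by the list above $\bar G$ is cyclic, hence $G$ is cyclic. In the even case, preimages of cyclic groups are cyclic. The preimage of the dihedral group of order $2m$ is the dicyclic group $\Dic_m$ of order $4m$, and the remaining preimages are the binary tetrahedral, octahedral and icosahedral groups of orders $24,48,120$.

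Two things remain. The isomorphisms with $\SL(2,\FF_3)$ and $\SL(2,\FF_5)$ follow because each of those groups has center $\{\pm1\}$, with quotient $\PSL(2,\FF_3)\cong A_4$ and $\PSL(2,\FF_5)\cong A_5$, and is a non-split central extension. Uniqueness up to conjugacy follows since conjugate subgroups of $\mathrm{SO}(3)$ have conjugate preimages.
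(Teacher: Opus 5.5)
The paper does not prove this statement. It quotes it as a classical result and refers to Klein's book. Your outline is the standard proof of that result, and it is correct in substance: averaging to land in $\mathrm{SU}(2)$, the double cover $\mathrm{SU}(2)\to\mathrm{SO}(3)$, pole-counting to classify the finite rotation groups, and lifting back using that $-E_2$ is the unique involution. The polyhedral identification step is only sketched, as you note, but it is the textbook argument.

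Two places need one more line each.

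First, the identification with $\SL(2,\FF_3)$ and $\SL(2,\FF_5)$. Knowing that both groups are non-split central extensions of $A_4$ (resp.\ $A_5$) by $C_2$ does not by itself make them isomorphic. For example, $C_4\times C_2$, $D_4$ and $Q_8$ are pairwise non-isomorphic non-split central extensions of $V_4$ by $C_2$. You must add that such an extension is unique. This holds because
\[
H^2(A_4;\ZZ/2)\cong H^2(A_5;\ZZ/2)\cong \ZZ/2 ,
\]
since the Schur multipliers are $C_2$ and the abelianizations have odd order.

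Second, uniqueness up to conjugacy in the odd-order case. There $G$ is not a full preimage, so ``conjugate subgroups of $\mathrm{SO}(3)$ have conjugate preimages'' does not literally apply to $G$. Either of the following closes it:
\begin{itemize}
\item $G$ is the unique index-$2$ subgroup of the cyclic group $\pi^{-1}(\bar G)$, so conjugating the preimage conjugates $G$.
\item Directly, any cyclic subgroup of order $n$ in $\SL(2,\CC)$ is conjugate to $\{\mathrm{diag}(\zeta^k,\zeta^{-k})\}$ for a primitive $n$-th root of unity $\zeta$.
\end{itemize}
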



\section{Finite specializations of the \texorpdfstring{$q$}{q}-deformed modular group}

This section is devoted to the proof of Theorem~\ref{finite}.

For $\zeta \in \CC^*$, $R_q|_{q=\zeta}$ has a finite order if and only if $\zeta =\zeta_n$ for some $n \ge 2$.  Hence, if $G_q(\zeta)$ is a finite group, then  $\zeta =\zeta_n$ for some $n \ge 2$. First, we consider the cases $n=2,3,4$, and 5. Each case is treated in a separate subsection. Finally, we consider the case $n \ge 6$ in the final subsection.

\subsection{The case \texorpdfstring{$n=2$}{n=2}} Clearly, $\zeta_2=-1$, and $G_q(-1)$ is generated by 
$$
R_{-1}:=R_q|_{q=-1}=\begin{pmatrix} -1 &1 \\ 0 &1 \end{pmatrix} \qquad \text{and} \qquad
S_{-1}:=S_q|_{q=-1}=\begin{pmatrix}0&1\\1&0 \end{pmatrix}. 
$$
An easy calculation shows that  $R_{-1}S_{-1}=\begin{pmatrix} 1 &-1 \\ 1 & 0 \end{pmatrix}$ has order 6, and generates $H:=G_q(-1) \cap \SL(2,\ZZ)$. Moreover, $\<R_{-1} \> \cdot H =G_q(-1)$ and $G_q(-1)=H \rtimes \<R_{-1} \>   \cong D_6$.   \qed

\subsection{The case \texorpdfstring{$n=3$}{n=3}}
In the rest of the paper, $\omega$ means a primitive cube root of unity $\zeta_3$. That is, $\omega =\dfrac{-1 \pm \sqrt{-3}}2$. 

The group $G_q(\omega)$ is the subgroup of $\mathrm{GL}(2, \CC)$ generated by 
$$
R_\omega:=R_q|_{q=\omega}=\begin{pmatrix}\omega &1 \\ 0 &1 \end{pmatrix} \qquad \text{and} \qquad
S_\omega:=S_q|_{q=\omega}=\begin{pmatrix}0&-\omega^2\\1&0 \end{pmatrix}. 
$$ 
A direct computation shows that $G_q(\omega)$ is a group of order 72 whose subgroup $H:=G_q(\omega) \cap \SL(2, \CC)$ consists of the following 24 elements  
$$
\pm \begin{pmatrix}1&0\\0&1 \end{pmatrix}, \quad  \pm \begin{pmatrix} \omega^2 & \omega \\ 0& \omega \end{pmatrix}, \quad 
\pm \begin{pmatrix}0&\omega \\ -\omega^2 &0 \end{pmatrix}, \quad 
\pm \begin{pmatrix}\omega& -\omega \\0& \omega^2 \end{pmatrix},
$$
$$ 
\pm \begin{pmatrix}1&-1\\1&0 \end{pmatrix}, \quad 
\pm \begin{pmatrix}0&-\omega^2 \\ \omega &1 \end{pmatrix}, \quad 
\pm \begin{pmatrix}1&\omega^2\\-\omega&0 \end{pmatrix}, \quad  
\pm \begin{pmatrix} \omega & 0 \\ -\omega^2 & \omega^2 \end{pmatrix},
$$

$$
\pm \begin{pmatrix} \omega^2 & 0 \\ \omega^2 & \omega \end{pmatrix}, \quad  \pm \begin{pmatrix}0&-1 \\ 1 & -1 \end{pmatrix}, \quad \pm \begin{pmatrix} -\omega^2 &\omega^2 \\ 1 & \omega^2 \end{pmatrix}, \quad 
\pm \begin{pmatrix} \omega& 1  \\ \omega & -\omega \end{pmatrix}.$$
To show that $H \cong \SL(2,\FF_3)$, one can use the classification theorem (Theorem~\ref{finite SL(2-C)}). For example, it is enough to compare the traces of the above  matrices with the character tables of the corresponding groups.   
However, we take an alternative approach here. 
From the above matrices, we set
$$\beine := E_2, \quad \bi:=  \begin{pmatrix}0&\omega \\ -\omega^2 &0 \end{pmatrix}, \quad \bj:=\begin{pmatrix} -\omega^2 &\omega^2 \\ 1 & \omega^2 \end{pmatrix}, \quad \bk:= \begin{pmatrix} \omega& 1  \\ \omega & -\omega \end{pmatrix} \in H.$$
Then we  have $$\bi^2=\bj^2=\bk^2=\bi\bj\bk=-\beine,$$
and hence $\{a\beine+b\bi+c\bj+d\bk \mid a,b,c,d \in \RR \}$ forms a quaternion. 
An easy calculation shows that $H \setminus \{ \pm \beine, \pm \bi, \pm \bj, \pm \bk\}$
consists of 16 elements given by 
$\frac12(\pm \beine \pm \bi \pm \bj \pm \bk).$ 
So we have $H \cong \mathrm{SL}(2,\FF_3)$ by \cite[\S 11.2]{V}. 
We also have $H \triangleleft G_q(\omega)$, $\omega E_2 \in Z(G_q(\omega))\setminus H$, and $G_q(\omega) = H \times \<\omega E_2\> \cong \SL(2,\FF_3) \times C_3.$


\subsection{The case \texorpdfstring{$n=4$}{n=4}} In the rest of the paper, $i$ means $\sqrt{-1}=\zeta_4$. 
A direct computation shows that $G_q(i)$ is a group of order 96 whose subgroup $H:=G_q(i) \cap \SL(2, \CC)$ consists of the following 24 elements
$$
\pm \begin{pmatrix}1&0\\0&1 \end{pmatrix}, \quad  
\pm \begin{pmatrix}i&1-i \\ 0 & -i \end{pmatrix}, \quad  
\pm \begin{pmatrix}i& 0 \\1+i & -i \end{pmatrix}, \quad  
\pm\begin{pmatrix} -1 & 1+i \\ -1+i& 1 \end{pmatrix}, 
$$
$$ 
\pm \begin{pmatrix} -1 &i \\ i & 0 \end{pmatrix}, \ 
\pm \begin{pmatrix} i& -i  \\ 1 & -1-i \end{pmatrix}, \   
\pm \begin{pmatrix}0&-1 \\ 1 &-1 \end{pmatrix}, \ 
\pm \begin{pmatrix} 1+i & -i \\ 1 & -i \end{pmatrix},
$$
$$
\pm  \begin{pmatrix}1&-1\\1&0 \end{pmatrix}, \  
\pm \begin{pmatrix}1-i&-1 \\ -i & i \end{pmatrix}, 
 \  
\pm \begin{pmatrix} 0 & i \\ i & 1 \end{pmatrix}, \  
\pm \begin{pmatrix}i&1\\i&1-i \end{pmatrix}.$$

Set 
\begin{equation}\label{ijk}
\beine := E_2, \quad \bi:= \begin{pmatrix}i&1-i \\ 0 & -i \end{pmatrix}, \ \bj:=\begin{pmatrix}i& 0 \\1+i & -i \end{pmatrix}, \ \bk:=\begin{pmatrix} -1 & 1+i \\ -1+i& 1 \end{pmatrix}  \in H.
\end{equation}
Then $\{a\beine+b\bi+c\bj+d\bk \mid a,b,c,d \in \RR \}$ forms a quaternion, and we can show that $H \cong   \mathrm{SL}(2,\FF_3)$ by a similar argument to the case $n=3$. 


We have  $\{ X \in G_q(i) \mid \det(X)=-1\}= iE_2 \cdot H$, 
$$\{ X \in G_q(i) \mid \det(X)=i\}=Y_1 \cdot H =\{ \pm  Y_k \mid 1 \le k \le 12 \}$$ and  
$\{ X \in G_q(i) \mid \det(X)
=-i\}=iY_1 \cdot H =\{ \pm i Y_k \mid 1 \le k \le 12 \},$ where 
$$Y_1:=R_q|_{q=i}= \begin{pmatrix}i &1\\ 0 & 1\end{pmatrix}, \ Y_2:=   \begin{pmatrix}1 &-1\\ 0 & i\end{pmatrix},  \ Y_3 :=  \begin{pmatrix}i &-i\\ i+1 & -i\end{pmatrix}, \  Y_4 := 
 \begin{pmatrix}-1 &i\\ i-1 & 1\end{pmatrix},$$  
 $$Y_5:= \begin{pmatrix}0 &1\\ -i & 0\end{pmatrix}, \ Y_6:= \begin{pmatrix}0 &i\\ -1 & i+1\end{pmatrix}, \ 
Y_7:= \begin{pmatrix}-1 &i+1\\ -1 & 1\end{pmatrix}, 
\  Y_8:=  \begin{pmatrix}i  &1-i\\ 1 & -i\end{pmatrix},$$
$$Y_9:= \begin{pmatrix}i+1 &-i\\ 1 & 0\end{pmatrix}, \  Y_{10}:= \begin{pmatrix}1 &0\\ -i & i\end{pmatrix}, \ Y_{11}:= \begin{pmatrix}i &0\\ i & 1\end{pmatrix}, \  Y_{12}:=  \begin{pmatrix}i-1 &1\\ i & 1-i\end{pmatrix}. \quad $$
 Note that  $H \triangleleft G_q(i)$, $\< Y_1 \> \cap H =\{ E_2\}$, $\< Y_1 \> \cdot H=G_q(i)$, $\<Y_1\> \cong C_4$. 
 So we have $G_q(i)= H \rtimes \< Y_1 \> \cong \SL(2, \FF_3) \rtimes C_4$. 
 Easy calculation shows that $Z(G_q(i))=\{\pm E_2, \pm iE_2\}$, 
 while $Z( \SL(2, \FF_3)\times C_4) = \{\pm E_2\} \times C_4$. 
Hence $G_q(i) \not \cong \SL(2, \FF_3) \times C_4$.
 \qed 

\begin{rem}
Set $H:=G_q(i) \cap \SL(2, \CC)$ as in the above proof, and set $J:=\{M \in G_q(i)  \mid \det M=\pm 1\}= H \sqcup i H$. Note that $J= H \times \< \, i \, \bi \, \> \cong \SL(2,\FF_3) \times C_2$, where $\bi \in H$ is the one given in \eqref{ijk}.  
We also have $H/\{ \pm E_2 \} \cong A_4$ and $G_q(i)/\{\pm E_2\} \cong A_4 \rtimes C_4$. Hence it seems that there is no direct relation between our $G_q(i)$ and the binary octahedral group, whose order is 48. 
Note that the binary octahedral group appears in the context of Burau's representation at $i$ (\cite[Proposition~3.1]{FK}). 
\end{rem}


\subsection{The case \texorpdfstring{$n=5$}{n=5}}
To prove this case, we use computer calculations, and the relevant files are available at:\\
\begin{minipage}{0.9\textwidth}
\footnotesize
\url{https://drive.google.com/drive/folders/1vaeBPDucbgmqz65iPUXneeKLmZ23JB1s?usp=sharing}
\end{minipage}
\*\\

We can check that
\[
R_{\zeta_5}:=R_q\big|_{q=\zeta_5}=
\begin{pmatrix}
\zeta_5 & 1\\
0 & 1
\end{pmatrix},
\qquad
S_{\zeta_5}:=S_q\big|_{q=\zeta_5}=
\begin{pmatrix}
0 & -\zeta_5^{\, -1}\\
1 & 0
\end{pmatrix}
\]
generate a finite subgroup of order $600$ in $\mathrm{GL}(2,\mathbb{C})$.
Thus, we have $|G_q(\zeta_5)|=600$, and $H:=G_q(\zeta_5)\cap \mathrm{SL}(2,\mathbb{C})$ has order $120$. 
Since $H$ is non-abelian, it is isomorphic to either the binary dihedral group $\Dic_{30}$ or the binary icosahedral group  $\SL(2, \FF_5)$ by the classification of finite subgroups of $\mathrm{SL}(2,\CC)$ (Theorem~\ref{finite SL(2-C)}).
The former possibility can be excluded. Indeed, we will show  that $H/\{ \pm E_2\} \cong \PSL(2, \ZZ/5\ZZ) \cong A_5$ in Remark~\ref{PSL(Z/nZ)} below, whereas no quotient group of $\Dic_{30}$ is isomorphic to $A_5$. 
We can also show that $H \cong \SL(2,\FF_5)$ by a more direct argument. We can find
$$
A:=
\begin{pmatrix}
0 & 1\\
-1 & 1
\end{pmatrix}
=S_{\zeta_5}^2 R_{\zeta_5}^2 S_{\zeta_5} R_{\zeta_5} S_{\zeta_5} R_{\zeta_5}^2 S_{\zeta_5},
\qquad
B:=
\begin{pmatrix}
-\zeta_5^2 & 0\\
\zeta_5^3 & -\zeta_5^3
\end{pmatrix}
=S_{\zeta_5} R_{\zeta_5} S_{\zeta_5}^5
$$
in $H$, which satisfy $A^3 = B^5 = (AB)^2 = -E_2$ and $H=\<A,B\>$. 
Therefore, $H \cong \SL(2, \FF_5)$. 
We can also verify that $G_q(\zeta_5)=H\times \<\, \zeta_5 E_2 \, \> \cong \mathrm{SL}(2,\FF_5) \times C_5$.

\subsection{The case \texorpdfstring{$n\ge 6$}{n>=6}}
It remains to show that $G_q(\zeta_n)$ is infinite for $n \ge 6$. It has been checked in \cite[Remark 8.5]{KMRWY} that $G_q(\zeta_6)$ is an infinite group. In fact, $M:=(R^{3}_q S_q)|_{q=\zeta_6} \in G_q(\zeta_6)$ is not diagonalizable, and hence $M^m \ne E_2$ for all positive integers $m$. So we may assume that $n \ge 7$. 
In the rest of the paper, for $\zeta \in \CC^*$,  $[j]_\zeta$ denotes $[j]_q|_{q=\zeta}$. 

\begin{lem}\label{lemma_j-1}
For any integer $n\geq7$, there is 
a positive integer $j$ with $0\leq j \leq n-1$ such that $\left\vert   \left[j \right]_{\zeta_{n}}   \right\vert>2$.  
\end{lem}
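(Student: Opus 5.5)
Write $\zeta_n=e^{2\pi i k/n}$ with $\gcd(k,n)=1$. The plan is to compute $|[j]_{\zeta_n}|$ in closed form and then choose $j$ so that it is large. For $1\le j\le n-1$ we have $\zeta_n^j\ne 1$, so
$$
[j]_{\zeta_n}=\frac{\zeta_n^{\,j}-1}{\zeta_n-1}, \qquad
\left|[j]_{\zeta_n}\right|=\frac{|\sin(\pi jk/n)|}{|\sin(\pi k/n)|}.
$$
Using $|e^{i\theta}-1|=2|\sin(\theta/2)|$, the problem reduces to an elementary inequality between sines.

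We choose $j$ to make the numerator as large as possible. Since $k$ is invertible modulo $n$, the residues $jk \bmod n$ run over all of $\{1,\dots,n-1\}$ as $j$ does. So we can pick $j$ with $jk\equiv \lfloor n/2\rfloor \pmod n$. For this $j$ the numerator is $|\sin(\pi\lfloor n/2\rfloor/n)|$. It equals $1$ if $n$ is even, and $\cos(\pi/(2n))$ if $n$ is odd.

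Next we bound the denominator from above. Replacing $k$ by $n-k$ does not change $|\sin(\pi k/n)|$, so the worst case is $k$ as close to $n/2$ as possible. Then the denominator is at most $|\sin(\pi\lfloor n/2\rfloor/n)|$, which is at most $1$. So this choice alone gives a ratio of only about $1$, and $j$ must depend more cleverly on $k$. This is the main obstacle: a large numerator is useless if $k$ itself is near $n/2$, making the denominator close to $1$.

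The fix is to choose the target residue $m\equiv jk$ relative to $k$, using the fact that $|[j]_{\zeta_n}|=|\sin(\pi m/n)|/|\sin(\pi k/n)|$ depends only on $m=jk \bmod n$. Normalize $k$ (up to sign and complex conjugation, which preserve absolute values) so that $1\le k\le n/2$. Two cases:
\begin{itemize}
\item If $k$ is small, say $\sin(\pi k/n)<\tfrac12\cos(\pi/(2n))$, take $m=\lfloor n/2\rfloor$. Then the ratio exceeds $2$ directly.
\item Otherwise $\pi k/n$ lies in a middle range, and the denominator is not small enough for that trick. Here we instead exploit that $k$ has a multiplicative inverse, so $\zeta_n^{k'}$ with $kk'\equiv 1$ is again primitive; but this does not change the set of values. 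So a cleaner route is needed: bound $\max_j|[j]_{\zeta_n}|$ from below by $1/\sin(\pi k/n)\cdot\max_m|\sin(\pi m/n)|$. This exceeds $2$ only when $\sin(\pi k/n)<1/2$, i.e.\ $k<n/6$.
\end{itemize}

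Hence, for $k$ near $n/2$ the statement seems to require an actual check. For instance, $n=7$, $k=3$ gives $|[j]|$ values of $\sin(3\pi j/7)/\sin(3\pi/7)$, all at most about $1.03$. So I expect the lemma is intended for one specific primitive root, namely $\zeta_n=e^{2\pi i/n}$ ($k=1$), and I would prove it in that form. With $k=1$ and $j=\lfloor n/2\rfloor$, we get $|[j]|\ge \cos(\pi/14)/\sin(\pi/7)\approx 2.25>2$ for $n\ge7$, since $\sin(\pi/n)$ decreases in $n$. This also settles the finiteness application, because Galois conjugation preserves finiteness of $G_q(\zeta)$.
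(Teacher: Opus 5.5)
Your final argument is essentially the paper's proof. Both reduce to $|[j]_{\zeta_n}|=\sin(j\pi/n)/\sin(\pi/n)$ and take $j$ near $n/2$: the paper uses $j=\frac{n+1}{2}$ resp.\ $\frac{n+2}{2}$, getting $\frac12\csc\frac{\pi}{2n}$ resp.\ $\cot\frac{\pi}{n}$, while your uniform $j=\lfloor n/2\rfloor$ with the lower bound $1/(2\sin\frac{\pi}{14})\approx 2.25$ works just as well.

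You are also right that the argument needs $\zeta_n=e^{2\pi i/n}$, which the paper fixes silently when it writes $\zeta_n^{\,j}=\cos\frac{2j\pi}{n}+i\sin\frac{2j\pi}{n}$. For $\zeta_7=e^{6\pi i/7}$ one has $\max_j|[j]_{\zeta_7}|=1$ exactly, so the literal statement fails for other primitive roots. Your Galois-conjugation remark is what transfers the finiteness conclusions to every primitive root. In a write-up, cut the inconclusive case analysis for general $k$.
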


\begin{proof}
We note that
\begin{align*}
\displaystyle
\left\vert 1 - \zeta_{n}^{\, j}   \right\vert&=\left\vert 1 - \left( \mathrm{cos}\left(\frac{2j\pi }{n}\right)+i \mathrm{sin}\left(\frac{2j\pi }{n}\right) \right)   \right\vert \\
&=\sqrt{\left(1 -  \mathrm{cos}\left(\frac{2j\pi}{n}\right) \right)^2+ \mathrm{sin}^2\left(\frac{2j\pi}{n}\right)}\\
&=\sqrt{2-2\, \mathrm{cos}\left(\frac{2j\pi }{n}\right)}=2\, \mathrm{sin}\left(\frac{j\pi }{n}\right).
\end{align*}
So we have 
$$\displaystyle
\left\vert   \left[j \right]_{\zeta_{n}}   \right\vert
=\frac{\left\vert 1 - \zeta_{n}^{\,j}\right \vert}{\left\vert 1 - \zeta_{n}\right \vert}
=\frac{\mathrm{sin}\left(\frac{j \pi }{n}\right)}{\mathrm{sin}\left(\frac{\pi}{n}\right)}.
$$
If $n\geq 7$ and odd, we take $j=\frac{n+1}{2}$. Since $n \ge 7$ now, we have 
\begin{eqnarray*}
\left\vert   \left[j \right]_{\zeta_{n}}   \right\vert =
\frac{\sin\left(\frac{\pi (n+1)}{2n}\right)}{\sin\left(\frac{\pi}{n}\right)}
=\frac{\sin \left(\frac{\pi }{2}+\frac{\pi}{2n}\right)}{\sin\left(\frac{\pi}{n}\right)}=\frac{\cos\left(\frac{\pi}{2n}\right)}{\sin\left(\frac{\pi}{n}\right)}
&=&\frac{\cos\left(\frac{\pi}{2n}\right)}{2\, \sin\left(\frac{\pi}{2n}\right)\cos\left(\frac{\pi}{2n}\right)}\\
&=&\frac{\csc\left(\frac{\pi}{2n}\right)}{2} > 2. 
\end{eqnarray*}
If $n\geq 7$ and even,  we  take $j=\frac{n+2}{2}$. 
Then,  
$$
\left\vert   \left[j \right]_{\zeta_{n}}   \right\vert =
\frac{\mathrm{sin}\left(\frac{\pi (n+2)}{2n}\right)}{\mathrm{sin}\left(\frac{\pi}{n}\right)}=\frac{\mathrm{sin}\left(\frac{\pi }{2}+\frac{\pi}{n}\right)}{\mathrm{sin}\left(\frac{\pi}{n}\right)}=\frac{\mathrm{cos}\left(\frac{\pi}{n}\right)}{\mathrm{sin}\left(\frac{\pi}{n}\right)}=\mathrm{cot}\left(\frac{\pi}{n}\right)>2. 
$$ 
\end{proof}

\noindent{\bf The final step of the proof of Theorem~\ref{finite}:}
Assume that $n \ge 7$. For a positive integer $j$, we have 
$$X_j := (R^{\, j}_q S_q)|_{q=\zeta_n}=\left( \begin{array}{cccc}[j]_{\zeta_n}&-\zeta_n^{\, j-1}\\1&0 \end{array}\right)\in G_q(\zeta_n).$$
The eigenvalues $\lambda, \lambda'$ of $X_j$ are the roots of the equation 
\begin{equation}\label{CHEQ}
x^2-\left[ j \right]_{\zeta_{n}}x+\zeta^{\, j-1}_{n}=0. 
\end{equation}
By Lemma~\ref{lemma_j-1}, we have
$$
\left\vert\lambda +\lambda'\right\vert = 
\left\vert   \left[ j \right]_{\zeta_{n}}   \right\vert >2$$ 
for some $j$. Since  $\vert\lambda \lambda'\vert=\vert\zeta^{\, j-1}_{n}\vert=1$, we may assume that $\vert\lambda\vert >1$ and $\vert \lambda'\vert <1$. 
So we have $\vert\Tr (X_j^{\, m})\vert \to \infty$ ($m
\to \infty$), and hence 
$\{ \Tr M \mid M \in G_q(\zeta_n)\}$ is an infinite set, and $G_q(\zeta_n)$ is an infinite group. \qed

\medskip



\begin{cor}\label{S finite} 
For $\zeta \in \CC^*$, 
the set $\{ \cS_{\frac{r}s}(\zeta) \mid \frac{r}s >1 \} \ (=\{ \cR_{\frac{r}s}(\zeta) \mid \frac{r}s >1 \} )$ is finite if and only if $\zeta=\zeta_n$ for $n=2,3,4, 5$. 
\end{cor}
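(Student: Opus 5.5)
Both directions will be read off from Theorem~\ref{finite} by relating the entries of elements of $G_q(\zeta)$ to the polynomials $\cS_{\frac{r}s}(\zeta)$, $\cR_{\frac{r}s}(\zeta)$.

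\emph{If $\zeta=\zeta_n$ with $n=2,3,4,5$.} Take $\frac{r}s=[[c_1,\ldots,c_k]]>1$. By Definition~\ref{def of q-deform}, $\cR_{\frac{r}s}(\zeta)$ and $\cS_{\frac{r}s}(\zeta)$ are the $(1,1)$- and $(2,1)$-entries of $M_q(c_1,\ldots,c_k)|_{q=\zeta}$, which lies in $G_q(\zeta)$. By Theorem~\ref{finite}(1) this group is finite, so only finitely many values occur. The equality of the two sets follows by specializing \eqref{R and S} at $q=\zeta$.

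\emph{If $\zeta$ is not of this form.} I will show the set is infinite by contraposition. Suppose $\{\cS_{\frac{r}s}(\zeta)\mid \frac{r}s>1\}$ is finite. By \eqref{R and S}, the set $\{\cS_{\frac{r}s}(\zeta)\mid \frac{r}s\in\QQ\}$ is then the same finite set $F$. Now take any $M_q\in G_q$.
\begin{itemize}
\item By Proposition~\ref{column of a general element}, the first column of $M_q$ is $\pm q^m(\cR_{\frac{r}s}(q),\cS_{\frac{r}s}(q))^T$ for some $\frac{r}s$ and some $m$.
\item Likewise the second column is $\pm q^{m'}(\cV(q),\cU(q))^T$, a multiple of $(\cR_{\frac vu}(q),\cS_{\frac vu}(q))^T$.
\item By \eqref{r and s}, each $\cR$ is $\pm q^{\ell}$ times some $\cS$.
\end{itemize}
Hence every entry of $M_q|_{q=\zeta}$ lies in $\{\pm\zeta^k x\mid x\in F,\ k\in\ZZ\}$.

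To make this set finite, I first show that $\zeta$ must be a root of unity. The step I expect to need care is controlling these monomial factors $\pm\zeta^k$. I would use the explicit family $\cS_{\frac{j}{1}}=1$ together with $\cR_{j}(q)=[j]_q$, since $R_q^{\,j}S_q=M_q(j)$ has first column $([j]_q,1)^T$. Then $\{[j]_\zeta\}_{j\ge1}$ must be finite. This is impossible unless $\zeta$ is a root of unity $\ne1$:
\begin{itemize}
\item For $|\zeta|\ne1$, $|[j]_\zeta|\to\infty$ or $[j]_\zeta\to1/(1-\zeta)$ with distinct values.
\item For $\zeta=1$, $[j]_1=j$ is unbounded.
\item For $|\zeta|=1$ not a root of unity, the values $(1-\zeta^j)/(1-\zeta)$ are pairwise distinct.
\end{itemize}
So $\zeta=\zeta_n$ is a root of unity, $\{\pm\zeta^k\}$ is finite, every entry of every matrix in $G_q(\zeta)$ lies in a finite set, and therefore $G_q(\zeta)$ is finite. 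Theorem~\ref{finite}(1) then forces $n\in\{2,3,4,5\}$.

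Alternatively, for $n\ge 6$ one could argue directly. For $n\ge7$, use the matrices $X_j^{\,m}$ from the final step of the proof of Theorem~\ref{finite}, whose entries are unbounded. For $n=6$, use powers of the non-diagonalizable $M$. Both yield unbounded entries, hence unbounded $|\cS_{\frac{r}s}(\zeta)|$. I would nonetheless present the group-finiteness argument above, as it avoids case analysis.
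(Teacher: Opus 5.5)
Your proposal is correct and follows essentially the same route as the paper. First, the values $[m]_\zeta$ force $\zeta$ to be a root of unity $\zeta_n$ with $n\ge 2$; the paper uses $\cS_{\frac1m}(q)=[m]_q$ here, while you use $\cR_{\frac m1}(q)=[m]_q$. Then, since every entry of an element of $G_q$ is $\pm q^k$ times some $\cR_{\frac rs}(q)$ or $\cS_{\frac rs}(q)$, the value set is finite exactly when $G_q(\zeta_n)$ is, and Theorem~\ref{finite} finishes the argument; your extra detail on this step is what the paper compresses into its citation of \eqref{r and s} and \eqref{R and S}.
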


\begin{proof}
If $\zeta \ne \zeta_n$ for all $n \ge 2$, then $[l]_\zeta \ne 0$ for all $l$, and $\{ [m]_\zeta \mid m \in \NN\}$ is infinite. Since $\cS_{\frac{1}m}(q)=[m]_q$, if $\{ \cS_{\frac{r}s}(\zeta) \mid\frac{r}s >1 \}$ is finite, then  $\zeta =\zeta_n$ for some $n \ge 2$. 
So we may assume that $\zeta=\zeta_n$.
Since $G_q(\zeta_n)$ is finite if and only if so is $\{ \cS_{\frac{r}s}(\zeta_n) \mid \frac{r}s >1 \}$ by \eqref{r and s} and \eqref{R and S}, the assertion follows from Theorem~\ref{finite}. 
\end{proof}

\begin{cor}\label{barG-q}
For $\zeta \in \CC^*$,  $\overline{G}_q(\zeta):=\PSL_q(2,\ZZ)|_{q=\zeta}$ is a finite group if and only if $\zeta=\zeta_n$ for $n=2,3,4, 5$. 
\end{cor}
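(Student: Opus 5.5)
We need to prove Corollary~\ref{barG-q}: $\overline{G}_q(\zeta)$ is finite if and only if $\zeta=\zeta_n$ with $n=2,3,4,5$. Here $\overline{G}_q(\zeta)$ should be read as the image of $G_q(\zeta)$ in $\mathrm{PGL}(2,\CC)$, that is, $G_q(\zeta)/(G_q(\zeta)\cap \CC^*E_2)$. This is the specialization at $q=\zeta$ of the quotient of $G_q$ by its center $Z(G_q)=\{\pm q^nE_2\}$.

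The plan is to compare $\overline{G}_q(\zeta)$ with $G_q(\zeta)$ and apply Theorem~\ref{finite}. There is a natural surjection
$$G_q(\zeta)\to\overline{G}_q(\zeta)$$
whose kernel $K$ is contained in the scalar matrices of $G_q(\zeta)$. Hence if $G_q(\zeta)$ is finite, so is $\overline{G}_q(\zeta)$. By Theorem~\ref{finite}(1), this gives the ``if'' direction for $n=2,3,4,5$.

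For the converse, suppose $\overline{G}_q(\zeta)$ is finite. It suffices to show that $G_q(\zeta)$ is finite, for then Theorem~\ref{finite} forces $\zeta=\zeta_n$ with $n\in\{2,3,4,5\}$. The step is done in four parts.

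First, since $\overline{G}_q(\zeta)$ is finite, the image of $R_q|_{q=\zeta}$ has finite order $m$. So $R_\zeta^m=\mu E_2$ for some scalar $\mu$. The matrix $R_\zeta$ has eigenvalues $\zeta$ and $1$, so $\zeta^m=1$ and $\mu=1$. If $\zeta=1$, then $R_1$ is unipotent and nontrivial, so no power of it is scalar; hence $\zeta=1$ is excluded. Therefore $\zeta=\zeta_n$ for some $n\ge2$.

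Second, every element of $G_q(\zeta_n)$ has determinant a power of $\zeta_n$, because $\det R_q=q$ and $\det S_q=q^{-1}$.

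Third, a scalar matrix $cE_2\in K$ satisfies $c^2=\det(cE_2)$, which is a power of $\zeta_n$. So $c$ lies in the finite group of $2n$-th roots of unity, and $K$ is finite.

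Fourth, $G_q(\zeta_n)$ is then an extension of the finite group $\overline{G}_q(\zeta_n)$ by the finite group $K$, so it is finite, as required.

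An alternative route avoids the extension argument. Traces are defined only up to the scalar ambiguity, but $|\Tr M|^2/|\det M|$ is well defined on $\overline{G}_q(\zeta)$ and equals $|\Tr M|^2$ since $|\det M|=1$. Finiteness of $\overline{G}_q(\zeta_n)$ would make $\{|\Tr M|\}$ finite, contradicting the final step of the proof of Theorem~\ref{finite}, where $|\Tr X_j^m|\to\infty$ for $n\ge7$. The case $n=6$ is handled by the non-diagonalizable $M=(R_q^3S_q)|_{q=\zeta_6}$: no power $M^m$ is scalar, so the image of $M$ has infinite order.

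The main obstacle is making precise what ``$\PSL_q(2,\ZZ)|_{q=\zeta}$'' means, namely that the specialized quotient is $G_q(\zeta)$ modulo its scalar subgroup. Once that is fixed, the kernel-finiteness argument above is routine.
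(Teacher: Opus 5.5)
Your proof is correct and follows the paper's argument. You use the image of $R_q$ to force $\zeta=\zeta_n$ with $n\ge 2$, observe that $\overline{G}_q(\zeta_n)$ is finite exactly when $G_q(\zeta_n)$ is, and invoke Theorem~\ref{finite}. You also justify that equivalence, which the paper leaves implicit: the kernel consists of scalars $cE_2$ with $c^{2n}=1$. This justification works equally well for your reading of $\overline{G}_q$ and for the paper's reading $G_q(\zeta_n)/\<\pm\zeta_n E_2\>$ in Remark~\ref{PSL(Z/nZ)}.
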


\begin{proof}
Considering the image of $R_q^{\, m}$ in $\overline{G}_q(\zeta)$, we see that $\overline{G}_q(\zeta)$ is infinite unless $\zeta=\zeta_n$ for some $n \ge 2$. 
So we may assume that $\zeta=\zeta_n$. Since 
$\overline{G}_q(\zeta_n)$ is finite if and only if so is $G_q(\zeta_n)$, the assertion follows from Theorem~\ref{finite}.  \end{proof}

If $n \ge 7$, $\{ \,  \Tr M \mid M \in G_q(\zeta_n) \, \}$ is unbounded as a subset of $\CC$ by the last step of the proof of Theorem~\ref{finite}, and hence $\{ \, \cS_{\frac{r}s}(\zeta_n) \mid \frac{r}s >1  \, \}$ is also. 
 Since $\ZZ[\zeta_6] \, (=\ZZ[\omega])$ is discrete in $\CC$ and  $\{ \, \cS_{\frac{r}s}(\zeta_6) \mid \frac{r}s>1 \, \}$ is infinite, the latter set cannot be bounded. Hence $\{ \, \cS_{\frac{r}s}(\zeta_n) \mid \frac{r}s >1 \,  \}$ is unbounded for $n \ge 6$.

\begin{rem}\label{existing results}
Leclere \cite{L24} showed a result corresponding to Theorem~\ref{finite}, but she directly treated $\PSL_q(2,\ZZ)|_{q=\zeta_n}$.  Topkara and Uludag \cite{TU25} also announced a similar observation. So $\SL(2, \FF_3)$ and $\SL(2, \FF_5)$ do not appear in their works.  
Labb\'e, Lapointe and Steiner \cite{LLS} 
stated that the proper subgroup of $G_q(\zeta_n)$ generated by two elements related to the Markoff injectivity conjecture is finite if and only if $n=2,3,4,5$ (\cite[Remark~2.4]{LLS}). The groups $\SL(2, \FF_3)$ and $\SL(2,\FF_5)$ also appear in their work. In addition, we believe that Theorem~\ref{finite} can also be obtained from \cite{FK}, although making this implication explicit seems to require additional effort. Note that \cite{FK} studies the images of Burau's representations of the braid group $B_3$ at $\zeta_n$ for $n=1,2,3,4,5,6$. 
\end{rem}



\section{The 2,3,4,5 property}
In the rest of the paper, for the $n$ under consideration and $z, z' \in \CC$, we write $z \equiv z'$ if $z = \pm \zeta_n^{\, k} z'$ for some $k \in \mathbb{Z}$. For $f(q) \in \ZZ[q]$, we have 
\begin{equation}\label{dual and conjugate}
 f(\zeta_n)^\vee \equiv f(\zeta_n^{\, -1}) = f(\overline{\zeta_n}) = \overline{f(\zeta_n)}.   
\end{equation}

Morier-Genoud and Ovsienko (\cite[Proposition~1.8]{MO1})  pointed out that $\cS_{\frac{r}s}(-1)=0, \pm 1$, and $s (=\cS_{\frac{r}s}(1))$ is even if and only if $\cS_{\frac{r}s}(-1)=0$. 
In \cite[Corollary~7.2]{KMRWY}, it is shown that 
\begin{equation}\label{omega}
\cS_{\frac{r}s}(\omega) \equiv \begin{cases}
0 & \text{if $s \equiv 0 \pmod{3}$,} \\
1  & \text{if $s \not \equiv 0 \pmod{3}$.}
\end{cases}
\end{equation}
Similarly, \cite[Theorem~7.6]{KMRWY} states that 
\begin{equation}\label{zeta_4}
\cS_{\frac{r}s}(i)=0 \Longleftrightarrow \text{$[4]_q \, (=1+q+q^2+q^3)$ divides $\cS_{\frac{r}s}(q)$}\Longleftrightarrow s \in 4\ZZ,  \end{equation}
and \cite[Corollary~7.7]{KMRWY} states that 
\begin{equation}\label{zeta_4 2}
\cS_{\frac{r}s}(i) \equiv \begin{cases}
0 &\text{if $s \equiv 0 \pmod{4}$},\\
1+i  &  \text{if $s \equiv 2 \pmod{4}$},\\
1  &  \text{if $s \equiv 1 \pmod{2}$}. 
\end{cases} 
\end{equation}
Hence, for $n=2,3,4$, we have that $s$ is a multiple of $n$ if and only if $\cS_{\frac{r}s}(\zeta_n)=0$. 
The corresponding statement also holds for $\cR_{\frac{r}s}(q)$.

The following result follows from the computational data obtained in the proof of Theorem~\ref{finite}.

\begin{cor}\label{zeta5}
For an irreducible fraction $\frac{r}s$, we have 
$$
\cS_{\frac{r}s}(\zeta_5) \equiv
\begin{cases}
0 & \text{if $s \in 5\ZZ$ and $r \equiv \pm 1 \pmod{5}$}\\
1-\zeta_5^2 & \text{if $s \in 5\ZZ$ and $r \equiv \pm 2 \pmod{5}$ }\\
1 & \text{if $s \equiv \pm 1 \pmod{5}$}\\
1+\zeta_5 & \text{if $s \equiv \pm 2 \pmod{5}$}
\end{cases}
$$
and 
$$
\cR_{\frac{r}s}(\zeta_5) \equiv
\begin{cases}
0 & \text{if $r \in 5\ZZ$ and $s \equiv \pm 1 \pmod{5}$}\\
1-\zeta_5^2 & \text{if $r \in 5\ZZ$ and $s \equiv \pm 2 \pmod{5}$ }\\
1 & \text{if $r \equiv \pm 1 \pmod{5}$}\\
1+\zeta_5 & \text{if $r \equiv \pm 2 \pmod{5}$.}
\end{cases}
$$ 
Hence $\cS_{\frac{r}s}(\zeta_5)=0$ if and only if $s \in 5\ZZ$ and $r \equiv \pm 1 \pmod{5}$. The same is true for $\cR_{\frac{r}s}(q)$. 
\end{cor}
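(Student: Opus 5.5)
The plan is to read the value of $\cS_{\frac{r}s}(\zeta_5)$ off the finite group $G_q(\zeta_5)$, using the reduction map $G_q \to \PSL(2,\ZZ) \to \PSL(2,\ZZ/5\ZZ)$. By Proposition~\ref{column of a general element}, every element of $\PSL_q(2,\ZZ)$ has the form $\begin{pmatrix}\cR_{\frac{r}s}(q) & \mathcal{V}(q)\\ \cS_{\frac{r}s}(q) & \mathcal{U}(q)\end{pmatrix}$ up to $\pm q^m$. Here $\frac{r}s=\frac{\cR(1)}{\cS(1)}$, and every irreducible $\frac{r}s$ occurs this way.

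So, up to $\equiv$, the pair $(\cR_{\frac{r}s}(\zeta_5),\cS_{\frac{r}s}(\zeta_5))$ is the first column of an element $X \in G_q(\zeta_5)$. The set of such columns is finite: $|G_q(\zeta_5)|=600$, and modulo the scalars $\pm\zeta_5^k$ there are $60$ classes. By Proposition~\ref{PSLq}, the class of $X$ in $G_q(\zeta_5)/\langle -E_2,\zeta_5E_2\rangle$ depends only on the image of $M_q$ in $\PSL(2,\ZZ)$. That image is a matrix $M=\begin{pmatrix} r & v\\ s & u\end{pmatrix}$ up to sign.

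The key step is to show that this assignment factors through reduction modulo $5$, giving an isomorphism
$$\PSL(2,\ZZ/5\ZZ) \cong G_q(\zeta_5)/\langle -E_2,\zeta_5E_2\rangle \cong H/\{\pm E_2\} \cong A_5.$$
This is the content promised in Remark~\ref{PSL(Z/nZ)}. To prove it, I would check that the kernel of $\PSL(2,\ZZ)\to G_q(\zeta_5)/\text{scalars}$ contains the normal closure of $R^5$. This holds because $R_{\zeta_5}^5=E_2$, as $\zeta_5$ is a root of $[5]_q$. I would then use the classical fact that $\PSL(2,\ZZ)/\langle\langle R^5\rangle\rangle \cong A_5$ has order $60$, which is the $(2,3,5)$ triangle group. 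Since $G_q(\zeta_5)/\text{scalars}$ also has order $60$ (as $600/10$), the induced surjection is an isomorphism. Hence the scalar class of $X$, and in particular the scalar class of its first column, depends only on $\pm(r,s,v,u) \bmod 5$.

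With this in hand, the statement reduces to a finite check. For each element of $\PSL(2,\FF_5)$, choose a representative word in $R_q,S_q$, evaluate at $\zeta_5$ (using the computer data already produced for Theorem~\ref{finite}), and record the lower-left entry up to $\equiv$.

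A refinement is needed to show that $\cS(\zeta_5)$ depends only on the pair $\pm(r,s) \bmod 5$ rather than on the whole matrix. Right multiplication by $R_q^k$ changes the second column but fixes the first column up to powers of $q$. It also realizes every $(v,u)$ compatible with a given $(r,s)$ modulo $5$, so the first column is determined by $\pm(r,s)$.

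The obstacle I expect is organizing this check cleanly and matching it with the four cases. I would tabulate the $12$ classes of nonzero vectors $\pm(r,s)\in\FF_5^2$. For $s\not\equiv0$, I would first use the translation $\frac{r}s\mapsto\frac{r}s+1$ together with $\cS_{\frac{r}s+n}=\cS_{\frac{r}s}$ to reduce to the representatives $\frac{r}s=\frac1s$, where $\cS_{\frac1m}=[m]_q$. Then $[1]_{\zeta_5}=1$, $[4]_{\zeta_5}\equiv1$, $[2]_{\zeta_5}=1+\zeta_5$, and $[3]_{\zeta_5}\equiv1+\zeta_5$ (as $[3]=-\zeta_5^3[2]^{\vee}$-type identity, to be checked). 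For $s\equiv0$, I would use the representatives $\frac{1}{5}$ and $\frac{2}{5}$. These give $\cS_{\frac15}(\zeta_5)=[5]_{\zeta_5}=0$, and I would compute $\cS_{\frac25}(q)=1+q+2q^2+q^3$ directly, verifying that its value at $\zeta_5$ is $\equiv 1-\zeta_5^2$. The statement for $\cR$ then follows from \eqref{r and s}, $\cR_{\frac{r}s}\equiv\cS_{-\frac{s}r}$, which swaps the roles of $r$ and $s$.
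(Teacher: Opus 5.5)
Your argument is correct, but it takes a genuinely different route from the paper. The paper's proof is a one-line appeal to the brute-force enumeration of the $600$ matrices of $G_q(\zeta_5)$ from the proof of Theorem~\ref{finite}. The dependence of $\cS_{\frac{r}s}(\zeta_5)$ on $(r,s)$ modulo $5$ is simply read off from that data. The isomorphism $G_q(\zeta_5)/\langle \pm\zeta_5 E_2\rangle\cong\PSL(2,\ZZ/5\ZZ)$ is only deduced afterwards, in Remark~\ref{PSL(Z/nZ)}, from the computer-based Corollary~\ref{PSL(zeta_n)}.

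You reverse this logic. First you prove the isomorphism conceptually, via the $(2,3,5)$ triangle group $\PSL(2,\ZZ)/\langle\langle R^5\rangle\rangle\cong A_5$. Then you use right multiplication by $R_q^k$, the overall sign, and the translation invariance $\cS_{\frac{r}s+1}=\cS_{\frac{r}s}$ to collapse the $12$ classes $\pm(r,s)\in\FF_5^2\setminus\{0\}$ to four hand evaluations:
\begin{itemize}
\item $[1]_{\zeta_5}$, $[2]_{\zeta_5}$ and $[5]_{\zeta_5}$;
\item $\cS_{\frac25}(\zeta_5)$, where $\cS_{\frac25}(q)=\cS_{\frac75}(q)=1+q+2q^2+q^3$.
\end{itemize}
Your ``to be checked'' identities are exactly those recorded in Remark~\ref{remark 5}(1).

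What your route buys is a proof a human can verify, which also explains why only residues modulo $5$ matter. You do not even need the computer value $|G_q(\zeta_5)|=600$. Since $A_5$ is simple and $R_{\zeta_5}$ is not scalar, the surjection $A_5\to G_q(\zeta_5)/\langle\pm\zeta_5E_2\rangle$ is automatically injective. This incidentally reproves, without a computer, finiteness and the order $600$ in the case $n=5$ of Theorem~\ref{finite}, as well as Corollary~\ref{PSL(zeta_n)} for $n=5$.

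What the paper's enumeration buys is uniformity: the same data gives, at no extra cost, the trace sets, the values $J_{\frac{r}s}(\zeta_5)$, and the check of $\cA_{\frac{r}s}(\zeta_5)$.
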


\begin{rem}\label{remark 5}
(1) The explicit forms of $\cS_{\frac{r}s}(\zeta_5)$ and $\cR_{\frac{r}s}(\zeta_5)$ are considerably more diverse. For example,
$$\zeta_5^{\, 3}(1+\zeta_5)=\zeta_5^{\, 3}+\zeta_5^{\, 4}=\zeta_5^{\, 3}-(1+\zeta_5+\zeta_5^{\, 2}+\zeta_5^{\, 3})=
-(1+\zeta_5+\zeta_5^{\, 2}),$$
$$\zeta_5^{\, 2}(1-\zeta_5^{\, 2})=\zeta_5^{\, 2}-\zeta_5^{\, 4}=\zeta_5^{\, 2}+1+\zeta_5+\zeta_5^{\, 2}+\zeta_5^{\, 3}=
1+\zeta_5+2\zeta_5^{\, 2}+\zeta_5^{\, 3},$$
$\zeta_5^{\, 3}(1-\zeta_5^{\, 2})=\zeta_5^{\, 3}-1$ and 
$\zeta_5^{\, 4}(1-\zeta_5^{\, 2})=-(1+2\zeta_5+\zeta_5^{\, 2}+\zeta_5^{\, 3}).$ 

(2) If we take $\displaystyle \zeta_5=\cos \frac{2\pi}5+i \sin \frac{2\pi}5$, then $\displaystyle 1+\zeta_5 \equiv -\zeta_5^{\, 2}(1+\zeta_5) = \frac{\sqrt{5}+1}2$, where the last quantity is the golden ratio. 

(3) For $0 \ne f(q) \in \ZZ[q]$, let 
\begin{equation}\label{norm}
N(f(\zeta_5)):=\prod_{j=1}^4f(\zeta_5^{\, j}) \in \NN
\end{equation}
be the field norm of $\QQ(\zeta_5)$ over $\QQ$. We can check that $N(1-\zeta_5)=5$ and $N(1+\zeta_5^{\, 2})=1$. By Corollary~\ref{zeta5}, we have
$N(\cS_{\frac{r}s}(\zeta_5)) \le 5$ for all $\frac{r}s$. 
\end{rem}

\begin{cor}\label{PSL(zeta_n)}
Assume that   
$$M_q=\begin{pmatrix}
\cR(q) & \mathcal{V}(q) \\
\cS(q) &  \mathcal{U}(q) \\
\end{pmatrix} \in G_q.$$ 
For $n=2, 3,4,5$, if the natural image of $M_q|_{q=1}$ in $\GL(2,\ZZ/n\ZZ)$ is $\pm E_2$, then $\cR(\zeta_n)=\cU(\zeta_n) =\pm \zeta_n^{\, j}$ for some $0 \le j < n$ and $\cV(\zeta_n)=\cS(\zeta_n)=0$. 
\end{cor}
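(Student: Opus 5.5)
We reduce the statement to the zero patterns of the specialized entries, which we already control through the column description. By Proposition~\ref{column of a general element}, after multiplying $M_q$ by a central element $\pm q^m E_2$ we may assume $\cR(q)=\pm q^a\cR_{\frac{r}s}(q)$ and $\cS(q)=\pm q^b\cS_{\frac{r}s}(q)$, and likewise $\cV(q)=\pm q^{c}\cR_{\frac{v}u}(q)$, $\cU(q)=\pm q^{d}\cS_{\frac{v}u}(q)$, with $r=\cR(1)$, $s=\cS(1)$, $v=\cV(1)$, $u=\cU(1)$. Such a central factor only changes the specialized matrix by $\pm\zeta_n^{\,m}E_2$, so the conclusion is unaffected. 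The hypothesis says $r\equiv u\equiv \pm1$ and $s\equiv v\equiv 0 \pmod n$.

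The first step is to show that the off-diagonal entries vanish. For $n=2,3,4$ we use \cite[Prop.~1.8]{MO1}, \eqref{omega} and \eqref{zeta_4}: since $s\in n\ZZ$, these give $\cS_{\frac{r}s}(\zeta_n)=0$, hence $\cS(\zeta_n)=0$. For $v$ we combine $v\in n\ZZ$ with the remark that the same statements hold for $\cR$ (via \eqref{r and s}), which gives $\cV(\zeta_n)=0$. For $n=5$ we use Corollary~\ref{zeta5}: because $s\in5\ZZ$ and $r\equiv\pm1\pmod 5$, we get $\cS(\zeta_5)=0$. Similarly $v\in 5\ZZ$ and $u\equiv \pm1$ give $\cR_{\frac{v}u}(\zeta_5)=0$ by the $\cR$-part of Corollary~\ref{zeta5}, so $\cV(\zeta_5)=0$. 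This is exactly where the extra condition $r\equiv\pm1$ in the $\zeta_5$ criterion is supplied by the hypothesis $M_q|_{q=1}\equiv\pm E_2$.

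The specialized matrix $M_q|_{q=\zeta_n}$ is therefore diagonal, say $\mathrm{diag}(\alpha,\beta)$. It lies in the finite group $G_q(\zeta_n)$ of Theorem~\ref{finite}. Its determinant is a power of $\zeta_n$, since $\det R_q=q$ and $\det S_q=q^{-1}$. So $\alpha\beta=\zeta_n^{\,k}$, and $\alpha,\beta$ are roots of unity.

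The remaining task, and the main obstacle, is to show that $\alpha=\beta=\pm\zeta_n^{\,j}$. I would first try the classification of diagonal elements in the groups of Theorem~\ref{finite}. For $n=2$ the diagonal elements of $D_6$ given explicitly are $\pm E_2$ and $\pm\mathrm{diag}(-1,1)$; the latter needs $\det=-1$ and must be excluded. For this I would use that $\cR(-1)=\pm1$ and $\cU(-1)=\pm1$ have the same sign modulo the parity constraint. An alternative is to compare with $M_q|_{q=1}$, but this fails. A cleaner approach for all $n$ is the following. Write $\alpha=\cR(\zeta_n)$ and $\beta=\cU(\zeta_n)$. Since $\det M_q=\pm q^{k}$, we have $\cR\cU-\cS\cV=\pm q^k$. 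Then use the trace: by Theorem~\ref{Tr A}, $\Tr M_q$ is, up to $\pm q^m$, a palindromic polynomial, so $\Tr M_q(\zeta_n)\equiv\overline{\Tr M_q(\zeta_n)}$ by \eqref{dual and conjugate}. Setting $\alpha=\epsilon\zeta^a$ and $\beta=\epsilon'\zeta^b$, the condition that $\alpha+\beta$ equals a root of unity times its conjugate, together with $\alpha\beta=\pm\zeta^k$, should force $\alpha=\beta$ except in degenerate cases. Those cases I would rule out using the explicit element lists for $n=3,4$, where $H$ contains only the scalar diagonal matrices $\pm E_2$ among matrices with zero off-diagonal entries, multiplied by the determinant cosets. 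For $n=5$ I would use the computer data, or the structure $G_q(\zeta_5)=H\times\<\zeta_5E_2\>$: a diagonal element of $H\cong\SL(2,\FF_5)$ reducing to $\pm E_2$ is central and hence $\pm E_2$. In that last case I would verify centrality through the map $H/\{\pm E_2\}\to\PSL(2,\ZZ/5\ZZ)$ of Remark~\ref{PSL(Z/nZ)}, which is injective.
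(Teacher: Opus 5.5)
Your Step~1 is sound. Proposition~\ref{column of a general element}, together with the criterion $\cS_{\frac{r}s}(-1)=0\Leftrightarrow s\in 2\ZZ$, \eqref{omega}, \eqref{zeta_4} and Corollary~\ref{zeta5}, does give $\cS(\zeta_n)=\cV(\zeta_n)=0$. The gap is in Step~2, which is the real content of the statement.

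Your uniform argument via the trace is vacuous. Since $|\alpha|=|\beta|=1$, we have $\overline{\alpha+\beta}=\alpha^{-1}+\beta^{-1}=(\alpha+\beta)/(\alpha\beta)$. So $\alpha+\beta\equiv\overline{\alpha+\beta}$ holds for \emph{every} candidate, e.g.\ $\mathrm{diag}(1,\zeta_n)$, and palindromicity of $\Tr M_q$ forces nothing.

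For $n=2$ you start from a false premise. $G_q(-1)$ consists of $\pm E_2$, $\pm X$, $\pm X^2$, $\pm R_{-1}$, $\pm S_{-1}$, $\pm R_{-1}X^2$ with $X=R_{-1}S_{-1}$, and it contains no $\mathrm{diag}(-1,1)$. In any case, a parity argument can never separate $1$ from $-1$.

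For $n=5$, the isomorphism $G_q(\zeta_n)/\<\pm\zeta_n E_2\>\cong\PSL(2,\ZZ/n\ZZ)$ in Remark~\ref{PSL(Z/nZ)}(1) is deduced \emph{from} the corollary you are proving. Invoking its injectivity is therefore circular.

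The paper simply reads the statement off the computed groups from the proof of Theorem~\ref{finite}, and some input of that kind is unavoidable. For $n=2,3,5$ there is a clean non-circular version that even makes your Step~1 unnecessary. The map $\Phi\colon G_q(\zeta_n)\to\PSL(2,\ZZ/n\ZZ)$ of Remark~\ref{PSL(Z/nZ)}(2) is defined and shown surjective without using the corollary. It sends $M_q|_{q=\zeta_n}$ to the class of $M_q|_{q=1}$ modulo $n$, and its kernel contains $\<\pm\zeta_n E_2\>$, of order $2$, $6$, $10$ respectively. Comparing orders ($12/6=2$, $72/12=6$, $600/60=10$) gives $\Ker\Phi=\<\pm\zeta_n E_2\>$, which is exactly the claim.

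This breaks down for $n=4$, because there is no ring map $\ZZ[i]\to\ZZ/4\ZZ$ with $i\mapsto 1$ (it would force $-1\equiv 1$). There you genuinely need your Step~1 plus an inspection of all $96$ elements $h$, $ih$ ($h\in H$), $\pm Y_k$, $\pm iY_k$ listed in the proof of Theorem~\ref{finite}. That inspection shows the only diagonal elements of $G_q(i)$ are $\pm E_2$ and $\pm iE_2$. Checking $H$ alone is not enough, since the cosets $Y_1H$ and $iY_1H$ are not scalar multiples of $H$.

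For $n=3$ your list-based check is fine as stated, because $G_q(\omega)=H\times\<\omega E_2\>$ and the only diagonal elements of $H$ are $\pm E_2$.
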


\begin{proof}
Follows from the computational data obtained in the proof of Theorem~\ref{finite}.
\end{proof}

\begin{rem}\label{PSL(Z/nZ)}
(1)  By Corollary~\ref{PSL(zeta_n)}, we see that $$G_q(\zeta_n)/\<\pm \zeta_n E_2\> \, (=\PSL_q(2,\ZZ)|_{q=\zeta_n})
\cong \PSL(2,\ZZ/n\ZZ)$$ for $n=2,3,4,5$. Moreover, $\PSL(2,\ZZ/3\ZZ) \cong A_4$ and $\PSL(2,\ZZ/5\ZZ) \cong A_5$ are quotients of $G_q(\omega) \cap \SL(2,\CC) \cong \SL(2,\FF_3)$ and $G_q(\zeta_5) \cap \SL(2,\CC) \cong \SL(2,\FF_5)$ respectively. 
On the contrary, $\PSL(2,\ZZ/2\ZZ) \cong S_3$ and 
$\PSL(2,\ZZ/4\ZZ) \cong S_4$ are  {\it not} quotients of $G_q(-1) \cap \SL(2,\ZZ) \cong C_6$ and 
$G_q(i) \cap \SL(2,\CC) \cong \SL(2,\FF_3)$ respectively, just quotients of $G_q(-1)$ and $G_q(i)$.    

(2) For a general $n$, the statement of Corollary~\ref{PSL(zeta_n)} does not hold. 
 As ideals of a polynomial ring $\ZZ[q]$, we have $(q-1, [n]_q)=(q-1,n)$. If $n$ is prime, then $[n]_q$ coincides with the $n$-th cyclotomic polynomial, and we have the surjective ring homomorphism $\varphi: \ZZ[\zeta_n] \, ( \cong \ZZ[q]/([n]_q)) \too \ZZ/n\ZZ$ given by $\overline{q} \longmapsto \overline{1}$, which induces the group homomorphism $\Phi: G_q(\zeta_n) \too \PSL(2, \ZZ/n\ZZ)$. By Proposition~\ref{PSLq}, $\Phi$ is surjective. 
Since $|G_q(\zeta_n)|=\infty$ for $n \ge 6$ by Theorem~\ref{finite} and $|\PSL(2,\ZZ/n\ZZ)|<\infty$, 
we have that $\Ker \Phi$ is an infinite group  for a prime integer $n \ge 7$. This means that the statement of Corollary~\ref{PSL(zeta_n)} does not hold also for a prime $n \ge 7$. This statement does not hold also for $n=6$. For example, $X_q:=M_q(2,4,3,2) \cdot R_q$ satisfies 
$X_q|_{q=1}=\begin{pmatrix}31&12\\ 18 & 7\end{pmatrix}$ and its image  $\GL(2,\ZZ/6\ZZ)$ is $E_2$. However, $X_q|_{q=\zeta_6} \not \equiv \pm E_2$.  
\end{rem}

For $f(q) \in \ZZ[q]$,  
we say that $f(q)$ has the {\it 2,3,4,5 property} if 
$$ \text{$f(1)$ is a multiple of $n$} \Longleftrightarrow f(\zeta_n)=0$$
holds for $n=2,3,4,5$ (possibly under a mild assumption or with a few exceptions). 
When $n$ is a prime integer, we have $f(\zeta_n)=0$  if and only if $[n]_q \mid f(q)$. Hence, for $n=2,3,5$, the implication $\Leftarrow$ obviously holds. The above results state that $\cS_{\frac{r}s}(q)$ satisfies the 2,3,4,5 property, although an additional assumption is required for $n=5$. 

Recall that $J_{\frac{r}s}(q) \in \ZZ[q]$ is the normalized Jones polynomial of the rational link $L(\frac{r}s)$. 
We have  $J_{\frac{r}s}(-1) \ne 0$ and $J_{\frac{r}s}(\omega)\ne 0$ for all $\frac{r}s$, and $J_{\frac{r}s}(i) =0$ if and only if $r \equiv 2 \pmod{4}$ (recall that $J_{\frac{r}s}(1)=r$). See, for example, \cite[\S8]{KMRWY}. 
So we cannot say $J_{\frac{r}s}(q)$ satisfies the 2,3,4,5 property. However, the following holds.   

\begin{cor}\label{J zeta5}
We have     
$$
J_{\frac{r}s}(\zeta_5) \equiv
\begin{cases}
0 & \text{if $r \in 5\ZZ$ and $s \equiv \pm 2 \pmod{5}$}\\
1-\zeta_5 & \text{if $r \in 5\ZZ$ and $s \equiv \pm 1 \pmod{5}$ }\\
1 & \text{if $r \equiv \pm 1 \pmod{5}$}\\
1+\zeta_5^{\, 2} \, (=(1+\zeta_5)^{-1})& \text{if $r \equiv \pm 2 \pmod{5}$.}
\end{cases}
$$ 
Hence $J_{\frac{r}s}(\zeta_5)=0$ if and only if $r \in 5\ZZ$ and $s \equiv \pm 2 \pmod{5}$. 
\end{cor}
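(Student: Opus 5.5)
We derive Corollary~\ref{J zeta5} directly from Corollary~\ref{zeta5} via Theorem~\ref{J-L}, which gives
$$J_{\frac{r}s}(q)=q\,\cR_{\frac{r}s}(q)+(1-q)\,\cS_{\frac{r}s}(q) \qquad \Bigl(\tfrac{r}s>1\Bigr).$$
Write $\zeta=\zeta_5$ and let $M_q=\begin{pmatrix}\cR & \cV\\ \cS & \cU\end{pmatrix}$ be the matrix of Definition~\ref{def of q-deform}. Then $J_{\frac{r}s}(\zeta)$ is the inner product of the first column of $M_q|_{q=\zeta}\in G_q(\zeta_5)$ with the fixed row vector $(\zeta,\,1-\zeta)$. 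Corollary~\ref{zeta5} only fixes $\cR(\zeta)$ and $\cS(\zeta)$ up to the equivalence $\equiv$. The relative phase $\pm\zeta^k$ between them matters when we add, so we cannot simply plug in the case table.

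The main obstacle is controlling this relative phase. The cleanest route is the one the paper itself uses for Corollary~\ref{zeta5}. Since $G_q(\zeta_5)$ is finite of order $600$, the set of first columns $\{(\cR(\zeta),\cS(\zeta))\}$ of the matrices $M_q(c_1,\dots,c_k)|_{q=\zeta}$ is finite.

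By Corollary~\ref{PSL(zeta_n)} and Remark~\ref{PSL(Z/nZ)}, the map $G_q(\zeta_5)\to \PSL(2,\ZZ/5\ZZ)$ is well defined, with kernel $\<\pm\zeta E_2\>$. So the first column at $\zeta$ is determined, up to a common factor $\pm\zeta^j$, by $(r,s)\bmod 5$ up to sign. A common factor is harmless for the relation $\equiv$.

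It therefore suffices to choose one representative fraction $\frac{r}s>1$ for each class of $\pm(r,s)\in(\ZZ/5\ZZ)^2\setminus\{0\}$ and compute $\zeta\cR(\zeta)+(1-\zeta)\cS(\zeta)$ explicitly. This is equivalent to checking the finite list of first columns of elements of $G_q(\zeta_5)$ in the computer data. There are $12$ classes, reduced further by the symmetries below, and we simplify each value in $\ZZ[\zeta]$ to one of $0,\ 1-\zeta,\ 1,\ 1+\zeta^2$ up to $\pm\zeta^k$.

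One subtlety to address is that the first columns are needed only for $\frac{r}s>1$, not for arbitrary group elements. By the translation rule \eqref{TRANS}, every residue class $\pm(r,s)$ with $\gcd(r,s)=1$ is attained by some fraction $>1$, so each class has a representative.

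To shorten the casework, we use symmetries. The formula $J_{\frac{r}s}=\flS_{\frac{s}r}$ in \eqref{Jones and flat}, together with \eqref{dual and conjugate}, shows that conjugation, i.e.\ $\zeta\mapsto\zeta^{-1}$, preserves the $\equiv$-classes involved. For instance, $1+\zeta^2\equiv(1+\zeta)^{-1}$ via $\zeta^2(1+\zeta^{-2})$, and $1-\zeta\equiv 1-\zeta^{-1}$.

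As a sanity check, we compare norms using Remark~\ref{remark 5}(3). We have $N(1-\zeta)=5$, and the norm of $J(\zeta)$ must divide the appropriate power of $5$ consistently with $J(1)=r$. This also confirms the answer in the cases $r\equiv\pm1,\pm2\pmod 5$, where $J(\zeta)$ must be a unit.

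The final claim, that $J_{\frac{r}s}(\zeta_5)=0$ if and only if $r\in5\ZZ$ and $s\equiv\pm2\pmod 5$, is then read off from the table. The row $r\in 5\ZZ$, $s\equiv\pm1$ gives $1-\zeta\neq0$, and the rows with $r\notin5\ZZ$ give units.
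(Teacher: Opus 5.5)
Your proposal is correct and is essentially the paper's argument: the paper likewise combines the first equality $J_{\frac{r}s}(q)=q\cR_{\frac{r}s}(q)+(1-q)\cS_{\frac{r}s}(q)$ of Theorem~\ref{J-L} with the finite computational data for $G_q(\zeta_5)$. Your reduction to the $12$ classes of $\pm(r,s) \bmod 5$, using the kernel $\langle \pm\zeta_5 E_2\rangle$ from Corollary~\ref{PSL(zeta_n)}, just makes that finite check explicit; passing from the kernel to first columns needs the extra remark that two elements of $\PSL(2,\ZZ/5\ZZ)$ with the same first column up to sign differ by a right factor $R^t$, and right multiplication by $R_q^t|_{q=\zeta_5}$ rescales the first column by $\zeta_5^t$, while your norm ``sanity check'' overreaches, since $J_{\frac{r}s}(1)=r$ alone only makes $J_{\frac{r}s}(\zeta_5)$ prime to $1-\zeta_5$, not a unit (though nothing depends on that paragraph).
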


\begin{proof}
By the first equality of \eqref{Jones and flat}, the assertion follows from the computational data in the proof of Theorem~\ref{finite}. The relevant files are available at the above address. 
\end{proof}

By Theorem~\ref{J-L}, we have  
$\flS_{\frac{r}s}(-1) \ne 0$ and $\flS_{\frac{r}s}(\omega)\ne 0$ for all $\frac{r}s$, $\flS_{\frac{r}s}(i) =0$ if and only if $s \equiv 2 \pmod{4}$, and  $\flS_{\frac{r}s}(\zeta_5)=0$ if and only if $s \in 5\ZZ$ and $r \equiv \pm 2 \pmod{5}$.

For $M_q \in G_q$, set $f(q):=\Tr M_q$. Then we have $f(-1)=0, \pm 1, \pm 2$. Hence, $f(-1)$ can be non-zero even if $f(1)$ is even. However, $f(q)=\Tr M_q$ still satisfies a weaker form of the 2,3,4,5 property. 

\begin{cor}\label{trace for 3,4,5}
The following hold. 
\begin{itemize}
\item[(1)] For $\omega=\zeta_3$, we have  $\{ \, \operatorname{Tr} M \mid M \in G_q(\omega) \, \}= \{0, \pm \omega^j, \pm 2 \omega^j \mid j=0,1,2 \}.$ 
Similarly,  
\begin{align*}
&\{ \, \operatorname{Tr} M \mid M \in G_q(i) \, \}\\
&=\{0, \pm 1, \pm 2, \pm i, \pm 2i, \pm1 \pm i \text{ (with arbitrary signs) }\}\\
&=\{0, c, \sqrt{2} \, \zeta_8\, c, 2c \mid c=i^j, j= 0,1,2,3 \} 
\end{align*}
and 
\begin{align*}
&\{ \, \operatorname{Tr} M \mid M \in G_q(\zeta_5) \, \} \\
&=\left\{0, c, 2c, (1+\zeta_5)c,  (1+\zeta_5^2)c\,\middle|\, c=\pm\zeta_5^j, j=0,1,2,3,4\right\}\\
&=\left\{0, \frac{\sqrt{5}-1}2c, c, \frac{\sqrt{5}+1}2c, 2c\,\middle|\, c=\pm\zeta_5^j, j=0,1,2,3,4\right\}. 
\end{align*}
\item[(2)] For $M_q \in G_q$, set $f(q):=\Tr M_q$. 
For $n=3,4,5$, if $f(1) \in n\ZZ$ then $f(\zeta_n)=0$. For $n=3,5$, the converse also holds.  
\end{itemize}
\end{cor}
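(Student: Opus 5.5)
Part (1) follows from the explicit descriptions of the finite groups obtained in the proof of Theorem~\ref{finite}. For $n=3$, write $G_q(\omega)=H\times\<\omega E_2\>$. Then every trace equals $\omega^j\Tr h$ with $h\in H$. From the list of the $24$ matrices in $H$, the traces are $\pm2$ (for $\pm\beine$), $0$ (for $\pm\bi,\pm\bj,\pm\bk$, the trace-zero quaternion units), and $\pm1$ up to a power of $\omega$ for the remaining elements $\frac12(\pm\beine\pm\bi\pm\bj\pm\bk)$. Reading these off the explicit list (for instance $\Tr\begin{pmatrix}\omega^2&\omega\\0&\omega\end{pmatrix}=\omega+\omega^2=-1$) gives the claimed set.

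For $n=4$ I use $G_q(i)=H\sqcup iH\sqcup Y_1H\sqcup iY_1H$. Since $H\cup iH$ is closed under multiplication by $\pm1,\pm i$, its traces are $\{0,\pm1,\pm2\}$ times powers of $i$. The coset $Y_1H$ is listed explicitly as $\{\pm Y_k\}$, and its traces $\Tr Y_k$ are seen directly to lie in $\{\pm1\pm i,\ \pm1,\pm i,0\}$. For example, $\Tr Y_1=1+i$, $\Tr Y_5=0$ and $\Tr Y_2=1+i$. Multiplying by $i$ then fills out the set. The rewriting $1+i=\sqrt2\,\zeta_8$ gives the second description. For $n=5$ I rely on the computer enumeration of the $600$ elements, exactly as in the proof of Theorem~\ref{finite}. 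Concretely, $G_q(\zeta_5)=H\times\<\zeta_5E_2\>$ with $H\cong\SL(2,\FF_5)$. In a faithful $2$-dimensional representation, the traces of elements of the binary icosahedral group are $\pm2,0,\pm1$ and $\pm\frac{\sqrt5\pm1}2$, so I only need to identify these traces with the elements $\pm(1+\zeta_5)$ and $\pm(1+\zeta_5^2)$ up to powers of $\zeta_5$. This identification uses Remark~\ref{remark 5}(2).

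For part (2), the key step is to show that $f(\zeta_n)$ depends only on $M_q|_{q=1}$ modulo $n$, up to the equivalence $\equiv$. I would argue as follows. By Corollary~\ref{PSL(zeta_n)} and Remark~\ref{PSL(Z/nZ)}(1), the map $G_q(\zeta_n)\to\PSL(2,\ZZ/n\ZZ)$ has kernel $\<\pm\zeta_nE_2\>$. So if two elements $M_q,M'_q$ have the same image modulo $n$ at $q=1$, up to sign, then $M_q|_{q=\zeta_n}=\pm\zeta_n^jM'_q|_{q=\zeta_n}$. Now suppose $f(1)\in n\ZZ$. Then $\overline{M_q|_{q=1}}\in\SL(2,\ZZ/n\ZZ)$ is a trace-zero element. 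I would then choose a representative $M'_q$ in the same class whose trace at $\zeta_n$ is visibly $0$. The natural candidate is $S_q$ or one of its conjugates, since $\Tr S_q=0$ identically.

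The claim therefore reduces to the following statement: every trace-zero element of $\PSL(2,\ZZ/n\ZZ)$ for $n=3,4,5$ is conjugate to $\overline S$. This is the main obstacle. For $n=3,5$ (primes), a trace-zero element $X$ satisfies $X^2=-E$ by Cayley--Hamilton. Such elements are conjugate in $\SL(2,\FF_p)$ to $\pm S$ when $p\equiv3\pmod 4$. For $p=5$ the conjugacy classes need more care, and the cleanest route may be to verify the case $p=5$ by the enumeration. Conjugation preserves the trace, and $\pm\zeta_n^j$ scaling preserves vanishing, so $f(\zeta_n)=0$ follows. For $n=4$, I would check the finitely many trace-zero classes in $\SL(2,\ZZ/4\ZZ)$ directly against the list of $Y_k$ and $H$.

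For the converse when $n=3,5$, I would compute the trace mod $n$ through the ring homomorphism $\varphi:\ZZ[\zeta_n]\to\ZZ/n\ZZ$ of Remark~\ref{PSL(Z/nZ)}(2), which sends $\zeta_n$ to $1$. We have $\varphi(f(\zeta_n))=f(1)\bmod n$. Hence $f(\zeta_n)=0$ forces $f(1)\in n\ZZ$, and this argument applies precisely because $n$ is prime.
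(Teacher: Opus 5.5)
Your part (1) is essentially the paper's: read the traces off the explicit descriptions of $G_q(\zeta_n)$. For $n=5$, your use of the character values $0,\pm1,\pm2,\pm\frac{\sqrt5\pm1}2$ of the natural representation of $\SL(2,\FF_5)$ is a harmless variant. Note that $\frac{\sqrt5-1}2\equiv 1+\zeta_5^2$ needs $(1+\zeta_5)(1+\zeta_5^2)=-\zeta_5^{4}$, not Remark~\ref{remark 5}(2) alone.

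For part (2) your route is genuinely different. The paper gets $n=3,5$ directly from (1). Under $\varphi:\ZZ[\zeta_n]\to\ZZ/n\ZZ$, every nonzero listed trace maps to $\pm1$ or $\pm2$, so $f(1)\in n\ZZ\iff f(\zeta_n)=0$. This is exactly your converse argument, so it already gives the forward implication too, and your conjugacy step is unnecessary for $n=3,5$.

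For $n=4$ there is no ring map $\ZZ[i]\to\ZZ/4\ZZ$ with $i\mapsto 1$, and the paper argues by contradiction. The nonzero-trace possibilities for $M_q|_{q=i}$ compatible with $f(1)\in 4\ZZ$ and the entrywise description \eqref{zeta_4 2} are $i^jE_2, i^jY_1, i^jY_2, i^jY_{10}, i^jY_{11}$. Each has $\cS(i)=0$ or $\cV(i)=0$, which forces $\cR(1)\equiv\cU(1)\equiv\pm1 \pmod 4$ and hence $f(1)\equiv 2\pmod 4$.

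You instead apply Corollary~\ref{PSL(zeta_n)} to $M_q{M'_q}^{-1}$ and lift a conjugator through the surjections $G_q\to\SL(2,\ZZ)\to\SL(2,\ZZ/n\ZZ)$. Since $\Tr(g_qS_qg_q^{-1})=0$ identically, everything reduces to one lemma: every trace-zero element of $\SL(2,\ZZ/n\ZZ)$ is conjugate to $\pm S$. This is uniform in $n$ and avoids the entrywise mod-$4$ results. It also explains why $n=2$ fails: $E_2$ has trace $0$ modulo $2$ but is not conjugate to $S$.

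The price is that lemma, which you left open for $n=4,5$. It does hold, and can be checked as follows.
\begin{itemize}
\item[$n=5$:] There are $30$ trace-zero elements. The centralizer of $S$ is $\{\begin{pmatrix}x&y\\-y&x\end{pmatrix} : x^2+y^2=1\}$, which has order $4$ in a group of order $120$. So its class has $30$ elements and is the whole set.
\item[$n=4$:] $\SL(2,\ZZ/4\ZZ)$ has order $48$ and contains $12$ trace-zero elements. The centralizer of $S$ has order $8$, so the classes of $S$ and of $-S$ have $6$ elements each. They are distinct, because $gS=-Sg$ forces $g=\begin{pmatrix}x&y\\y&-x\end{pmatrix}$ with $x^2+y^2\equiv -1\pmod 4$, which is impossible. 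Hence these two classes exhaust the trace-zero elements.
\end{itemize}
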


\begin{proof}
(1) Immediate from the computation given in the proof of Theorem~\ref{finite}, 

(2) The assertion directly follows from (1) for $n=3,5$,  and it remains to show the case $n=4$.  For contradiction, assume that $f(1)$ is a multiple of 4, but $f(i) \ne 0$. Set
$$M_q=\begin{pmatrix}
\cR(q) & \cV(q) \\
\cS(q)  & \cU(q) \\
\end{pmatrix}.$$
By the present assumption and \cite[Corollary~7.7]{KMRWY}, $M_q|_{q=i}$ is one of the following matrices 
$$i^j \begin{pmatrix}
1& 0 \\ 0&1 \\
\end{pmatrix}, \ 
i^j  \begin{pmatrix}i &1\\ 0 & 1\end{pmatrix}, \ i^j \begin{pmatrix}
1& -1 \\ 0& i \\
\end{pmatrix}, \ i^j \begin{pmatrix}1 &0\\ -i & i\end{pmatrix}, \ i^j \begin{pmatrix}
i& 0 \\ i & 1 \\
\end{pmatrix} \quad (j=0,1,2,3).$$
In any of these cases, at least one of  $\cS(i)$ and $\cV(i)$ is 0, 
that is, at least one of  $\cS(1)$ and $\cV(1)$ is a multiple of 4. Since $M_q|_{q=1} \in \mathrm{SL}(2,\ZZ)$, we have $\cR(1) \cdot \cU(1) \equiv 1 \pmod{4}$, and it implies that  $\cR(1) \equiv \cU(1) \equiv \pm 1 \pmod{4}$. Hence 
$$\Tr(M_q|_{q=1}) = \cR(1)+\cU(1) \equiv 2 \pmod{4},$$
and this is a contradiction. 
\end{proof}

\begin{rem}
(1) With the notation of Corollary~\ref{trace for 3,4,5} (2), $f(i)=0$ does not necessarily imply $f(1) \in 4\ZZ$, although $f(1) \in 2\ZZ$ in this case.  For example, $\Tr(M_q(2,2))=q^2+1$.

(2)  With the above notation, \cite[Theorem~12]{EJMV} (see also \cite{K}) shows that  if $M_q$ is a $q$-{\it Cohn matrix}, then $f(\omega)=0$. The $q$-Cohn matrices constitute a further special subclass of the matrices $M_q \in G_q$ with $f(1) \in 3\ZZ$. If $M_q$ is $q$-Cohn, then $\Tr M_q/[3]_q$ has a special meaning. It is not clear how much / in what way this result can be generalized.
\end{rem}

For an irreducible fraction $\frac{r}s >1$, the triplet $(a,b,c)$ given by  
\begin{equation}\label{abc}
a:=2rs, \quad b:=r^2-s^2, \quad c:=r^2+s^2
\end{equation}
is a Pythagorean triple (i.e., $a^2+b^2=c^2$) with $\operatorname{gcd}(a,b,c)=1,2$. Mathevet, Morier-Genoud and Ovsienko \cite{MMO} quantize \eqref{abc}. More precisely, they set
$$\cA_{\frac{r}s}(q):=q \cR_{\frac{r}s}(q) \cR_{\frac{s}r}(q)+ \cS_{\frac{r}s}(q) \cS_{\frac{s}r}(q), \quad \cB_{\frac{r}s}(q):= \cR_{\frac{r}s}(q) \cS_{\frac{s}r}(q)- \cS_{\frac{r}s}(q) \cR_{\frac{s}r}(q),$$
$$\cC_{\frac{r}s}(q)=q \cdot \cR_{\frac{r}s}(q)^2+ \cS_{\frac{r}s}(q)^2.$$
They are monic polynomials whose constant terms are 1. Moreover, they satisfy $\cA_{\frac{r}s}(1)=2rs$, $\cB_{\frac{r}s}(1)=r^2-s^2$, $\cC_{\frac{r}s}(1)=r^2+s^2$, and 
$$\cA_{\frac{r}s}(q)^2 + q \cB_{\frac{r}s}(q)^2 = \cC_{\frac{r}s}(q) \cdot 
\cC_{\frac{r}s}(q)^\vee.$$

The following result states that $\cA_{\frac{r}s}(q)$ and $\cB_{\frac{r}s}(q)$ satisfy the 2,3,4,5 property without any assumptions or exceptions.

\begin{thm}\label{ABC}
For $n=2,3,4,5$, $\cA_{\frac{r}s}(1)$ (resp. $\cB_{\frac{r}s}(1)$) is a multiple of $n$ if and only if $\cA_{\frac{r}s}(\zeta_n)=0$ (resp. $\cB_{\frac{r}s}(\zeta_n)=0$).     
\end{thm}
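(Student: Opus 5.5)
We express $\cA_{\frac{r}s}$ and $\cB_{\frac{r}s}$ through the four values $\cR_{\frac{r}s}(\zeta_n)$, $\cS_{\frac{r}s}(\zeta_n)$, $\cR_{\frac{s}r}(\zeta_n)$, $\cS_{\frac{s}r}(\zeta_n)$, each known up to the equivalence $\equiv$ from \eqref{omega}, \eqref{zeta_4 2} and Corollary~\ref{zeta5}. By \eqref{s/r full}, $\cR_{\frac{s}r}(q)=q^d\cS_{\frac{r}s}(q^{-1})$ and $\cS_{\frac{s}r}(q)=q^d\cR_{\frac{r}s}(q^{-1})$ with $d=\deg\cR_{\frac{r}s}$. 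Hence, at $q=\zeta_n$, \eqref{dual and conjugate} gives
$$\cA_{\frac{r}s}(\zeta_n)=\zeta_n^{\,d}\bigl(\zeta_n|\cR_{\frac{r}s}(\zeta_n)|^2\cdot\tfrac{\cR_{\frac{r}s}(\zeta_n)}{\overline{\cR_{\frac{r}s}(\zeta_n)}}\cdots\bigr),$$
or more cleanly, writing $R:=\cR_{\frac{r}s}(\zeta_n)$ and $S:=\cS_{\frac{r}s}(\zeta_n)$,
$$\cA_{\frac{r}s}(\zeta_n)=\zeta_n^{\,d}\bigl(\zeta_n R\overline{S}+S\overline{R}\bigr),\qquad \cB_{\frac{r}s}(\zeta_n)=\zeta_n^{\,d}\bigl(R\overline{R}-S\overline{S}\bigr)=\zeta_n^{\,d}\bigl(|R|^2-|S|^2\bigr).$$
So the problem reduces to deciding when $\zeta_n R\overline S+\overline{R}S=0$ and when $|R|=|S|$, in terms of the residues of $r,s$ modulo $n$.

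The $\cB$ part is then immediate. For $n=2,3$, $|R|,|S|\in\{0,1\}$ and at most one of them vanishes, since $\gcd(r,s)=1$. Thus $|R|=|S|$ iff neither is $0$, iff $n\nmid r,s$, iff $r^2\equiv s^2\pmod n$. For $n=4$, $|R|,|S|\in\{0,1,\sqrt2\}$, with $0$ for residue $0$, $\sqrt2$ for residue $2$ and $1$ for odd residues; equality of moduli means both are odd, which is equivalent to $4\mid r^2-s^2$. For $n=5$, Corollary~\ref{zeta5} gives moduli $1$ for residues $\pm1$, $|1+\zeta_5|=\phi$ (the golden ratio) for $\pm2$, and $0$ or $|1-\zeta_5^2|$ for residue $0$ depending on the other variable. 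One checks $|R|=|S|$ iff $r\equiv\pm s\not\equiv0\pmod 5$, using that $|1-\zeta_5^2|\ne1,\phi$ (its norm is $5$) and that $r,s$ are not both divisible by $5$. This matches $5\mid (r-s)(r+s)$.

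For $\cA$, note that $\zeta_n R\overline S+\overline R S=0$ means either $RS=0$, or $\zeta_n R\overline S/(\overline R S)=-1$, i.e.\ $\zeta_n(R/S)^2/|R/S|^2=-1$, so $\arg(R/S)\equiv \pi/2-\pi/n\pmod{\pi}$. The first case, $RS=0$, corresponds to $n\mid rs$ for $n=2,3,4$. For $n=5$ it corresponds to the narrower condition in Corollary~\ref{zeta5}. The remaining cases ($5\mid s$ with $r\equiv\pm2$, etc.), and $n=4$ with $r\equiv2\equiv$ or mixed even residues making $4\mid 2rs$ (i.e.\ $rs$ even), must be handled by the argument condition. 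This is the main obstacle: the $\equiv$ relation forgets the phases $\pm\zeta_n^k$, which is exactly what the argument condition depends on. I would therefore recover the true phases. One route is to use the fact that $(R,S)^T$ is the first column of an element of the finite group $G_q(\zeta_n)$, so that the pairs $(R,S)$ range over the finite list of first columns computed in the proof of Theorem~\ref{finite}. For $n=3,4$ these are explicit; for $n=5$ they are available from the computational data. For each possible first column, I would compute $\zeta_n R\overline S+\overline RS$ directly and compare with $n\mid 2rs$, using Corollary~\ref{PSL(zeta_n)} or the map $\Phi$ to the group over $\ZZ/n\ZZ$ to read off $(r,s)\bmod n$ from the column. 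Alternatively, and more conceptually, I would look for the identity $\cA(q)$ expressed via $J$ or traces, relating $\cA_{\frac{r}s}$ to the trace of $M_q\,S_q\,M_q^{-1}$-type elements, and then invoke Corollary~\ref{trace for 3,4,5}(2). Since $2rs\equiv0\pmod 4$ iff $rs$ even, the case $n=4$ must also be checked separately.
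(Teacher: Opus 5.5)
Your proposal is correct and follows the paper's proof. Both reduce to $\zeta_n R\overline{S}+S\overline{R}$ and $|R|^2-|S|^2$, and both handle $\cB$ by comparing moduli using \eqref{omega}, \eqref{zeta_4 2} and Corollary~\ref{zeta5}. For $\cA$, both resolve the lost-phase problem by working with the actual pairs $(\cR_{\frac rs}(\zeta_n),\cS_{\frac rs}(\zeta_n))$ up to a common unit (the first columns of the finite group $G_q(\zeta_n)$): the paper lists the six classes explicitly for $n=4$ and checks $n=5$ by computer.

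One small point: for $n=2$ the target condition is $2\mid 2rs$, which always holds, not $2\mid rs$. This still fits your framework, since $\zeta_2R\overline{S}+S\overline{R}=-RS+SR=0$ for real $R,S$. The trace-based alternative you sketch is not needed.
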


\begin{proof}
Recall that the ``if'' part is clear for $n=2,3,5$. 
By \eqref{s/r} and \eqref{dual and conjugate} we have 
$$\cA_{\frac{r}s}(\zeta_n) \equiv \zeta_n \cR_{\frac{r}s}(\zeta_n) 
\overline{\cS_{\frac{r}s}(\zeta_n)}+ \cS_{\frac{r}s}(\zeta_n) \overline{\cR_{\frac{r}s}(\zeta_n)},$$  
\begin{equation}\label{cB(q)}
\cB_{\frac{r}s}(\zeta_n)\equiv\cR_{\frac{r}s}(\zeta_n) \overline{\cR_{\frac{r}s}(\zeta_n)}- \cS_{\frac{r}s}(q) \overline{\cS_{\frac{r}s}(\zeta_n)}=
|\cR_{\frac{r}s}(\zeta_n)|^2 - |\cS_{\frac{r}s}(\zeta_n)|^2
\end{equation}

\medskip

\noindent {\bf The case $n=2$:} Since 
$$\begin{pmatrix}\cR_{\frac{r}s}(-1) \\ \cS_{\frac{r}s}(-1)\end{pmatrix} \equiv \begin{pmatrix}1 \\0\end{pmatrix}, \begin{pmatrix}0 \\1\end{pmatrix}, \begin{pmatrix}1 \\ 1\end{pmatrix},$$
the assertion follows. 
\medskip

\noindent {\bf The case $n=3$:} 
The assertion for $\cA_{\frac{r}s}(q)$ is clear from \cite[Corollary~7.2]{KMRWY}. For $\cB_{\frac{r}s}(q)$, it suffices to show that if $\cB_{\frac{r}s}(1)$ is a multiple of 3 then  $\cB_{\frac{r}s}(\omega)=0$.  Since $\frac{r}s$ is irreducible, $\cB_{\frac{r}s}(1)$ is a multiple of 3 if and only if $r,s \not \in 3\ZZ$. In this case, we have 
$$\begin{pmatrix}\cR_{\frac{r}s}(\omega) \\ \cS_{\frac{r}s}(\omega)\end{pmatrix} \equiv \begin{pmatrix}1 \\ -\omega\end{pmatrix}, \begin{pmatrix}1 \\ 1\end{pmatrix}$$
by \cite[Theorem~7.1]{KMRWY}, 
and we have $\cB_{\frac{r}s}(\omega)=0$ in both cases by \eqref{cB(q)}.

\medskip

\noindent {\bf The case $n=4$:} 
We have 
$$\begin{pmatrix}\cR_{\frac{r}s}(i) \\ \cS_{\frac{r}s}(i)\end{pmatrix}\equiv 
\begin{pmatrix}1\\0\end{pmatrix},
\begin{pmatrix}0\\1\end{pmatrix}, 
\begin{pmatrix}1\\1\end{pmatrix},
\begin{pmatrix}i\\1\end{pmatrix}, 
\begin{pmatrix}1+i\\1\end{pmatrix}, 
\begin{pmatrix}1\\1-i\end{pmatrix},$$ 
and $\cA_{\frac{r}s}(i) \equiv i \cR_{\frac{r}s}(i)\overline{\cS_{\frac{r}s}(i)}+ \cS_{\frac{r}s}(i) \overline{\cR_{\frac{r}s}(i)}$  is equivalent to 0,0, $i+1$, $-1-i$, 0, 0 respectively.  Hence $\cA_{\frac{r}s}(1)=2rs \in 4\ZZ$, if and only if one of $r$ and $s$ is even, if and only if $\cA_{\frac{r}s}(i)=0$. The assertion follows for $\cB_{\frac{r}s}(i)$ follows from \eqref{cB(q)}. 

\medskip

\noindent {\bf The case $n=5$:} 
 The assertion for  $\cA_{\frac{r}s}(q)$ can be verified by computer. We can also prove the assertion for $\cB_{\frac{r}s}(q)$ in this way, but we have a more conceptual proof. 
Since $\frac{r}s$ is irreducible, we have $r^2 \equiv s^2 \pmod{5}$, if and only if $r \equiv \pm s \not \equiv 0 \pmod{5}$, if and only if $|\cR_{\frac{r}s}(\zeta_5)|= |\cS_{\frac{r}s}(\zeta_5)|$, if and only if $\cB_{\frac{r}s}(\zeta_5)=0$, by Corollary~\ref{zeta5}. 
\end{proof}

\begin{rem}\label{ABC lem}
(1) The above proof shows that
$$\cA_{\frac{r}s}(-1)\equiv 0, \quad \cB_{\frac{r}s}(-1) \equiv 0,1, \quad \cA_{\frac{r}s}(\omega)\equiv 0,1+\omega \,  (=\sqrt{3}\, \zeta_{12}), \quad \cB_{\frac{r}s}(\omega) \equiv 0,1,$$
$$\cA_{\frac{r}s}(i)\equiv 0,1+i \,  (=\sqrt{2}\, \zeta_{8}), \quad \cB_{\frac{r}s}(i) \equiv 0,1,  \quad 
\cA_{\frac{r}s}(\zeta_5)\equiv 0,1, 1+q,  \quad \cB_{\frac{r}s}(\zeta_5) \equiv 0,1, 1+q.$$

(2) It is easy to check that $\cC_{\frac{r}s}(-1)=0$, if and only if $\cC_{\frac{r}s}(1)$ is even. However, we always have $\cC_{\frac{r}s}(\zeta_n)\ne 0$ for $n=3,4,5$. 
The cases $n=3, 4$ are easy. 
If $\cC_{\frac{r}s}(\zeta_5)=0$, then we have $\cC_{\frac{r}s}(1) =r^2+s^2\in 5\ZZ$ and $(r,s)\equiv (\pm 1, \pm 2),  (\pm 2, \pm 1) \pmod{5}$. Hence 
$$\cC_{\frac{r}s}(\zeta_5)=\zeta_5 \cdot \cR_{\frac{r}s}(\zeta_5)^2+ \cS_{\frac{r}s}(\zeta_5)^2 \ne 0.$$
In fact, by Remark~\ref{remark 5}, one of $N(\zeta_5 \cdot \cR_{\frac{r}s}(\zeta_5)^2)$ and $N(\cS_{\frac{r}s}(\zeta_5)^2)$ is equal to 1, and the other is 25. Here $N(-)$ is the norm of $\QQ(\zeta_5)/\QQ$ given in \eqref{norm}. 
\end{rem}


\section{The case \texorpdfstring{$n=6$}{n=6} and a conjecture}
\begin{thm}\label{TR-omega}
Note that $\zeta_6 =-\omega$.  We have 
$$
\{ \, \operatorname{Tr} M \mid M \in G_q(-\omega) \, \}
= \{ \, 0,  c, \sqrt{3}  \, \zeta_{12}\, c, 2c \mid c=(-\omega)^j \, \text{ for } \,  j =0,1,\ldots,5 \, \}.$$
 In particular, we have $|\{ \, \operatorname{Tr} M \mid M \in G_q(-\omega) \, \}| < \infty$. 
\end{thm}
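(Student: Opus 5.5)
The plan is to show that $G_q(-\omega)$ is simultaneously triangularizable, with both diagonal characters taking values in $\<\zeta_6\>$. Write $\zeta:=\zeta_6=-\omega$ and use $1-\zeta=\zeta^{-1}$, valid for $\zeta=e^{\pi i/3}$; for the other primitive sixth root, apply complex conjugation throughout.

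First, $R_\zeta=\begin{pmatrix}\zeta&1\\0&1\end{pmatrix}$ has the eigenvector $v:=\begin{pmatrix}1\\1-\zeta\end{pmatrix}$ with eigenvalue $1$.

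Next, $S_\zeta v=\begin{pmatrix}-\zeta^{-1}(1-\zeta)\\1\end{pmatrix}$. Since $1-\zeta=\zeta^{-1}$, the first entry is $-\zeta^{-2}=\zeta^{3}\zeta^{-2}=\zeta$. Hence $S_\zeta v=\zeta\begin{pmatrix}1\\ \zeta^{-1}\end{pmatrix}=\zeta v$.

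So the line $\CC v$ is stable under both generators, hence under all of $G_q(\zeta)$. This is consistent with the non-diagonalizable element $(R_q^3S_q)|_{q=\zeta_6}$ mentioned in the proof of Theorem~\ref{finite}.

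Consequently there are two homomorphisms $\chi_1,\chi_2\colon G_q(\zeta)\to\CC^*$, namely the action on $\CC v$ and on $\CC^2/\CC v$. Every $M\in G_q(\zeta)$ is conjugate to an upper triangular matrix with diagonal $(\chi_1(M),\chi_2(M))$, so $\Tr M=\chi_1(M)+\chi_2(M)$.

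On generators we have $\chi_1(R_\zeta)=1$ and $\chi_1(S_\zeta)=\zeta$. Since $\det R_\zeta=\zeta$ and $\det S_\zeta=\zeta^{-1}$, we get $\chi_2(R_\zeta)=\zeta$ and $\chi_2(S_\zeta)=\zeta^{-1}\cdot\zeta^{-1}=\zeta^{-2}$. Hence both characters take values in $\<\zeta\>\cong C_6$, and every trace has the form $\zeta^a+\zeta^b$. This already proves the finiteness claim.

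To identify the set, write $\zeta^a+\zeta^b=\zeta^a(1+\zeta^{d})$ with $d=b-a \bmod 6$:
\begin{itemize}
\item[] for $d=0$ one gets $2c$;
\item[] for $d=\pm1$ one gets $c\,(1+\zeta)$ up to a power of $\zeta$, and $1+\zeta=\sqrt3\,\zeta_{12}$;
\item[] for $d=\pm2$ one gets $\zeta^{a\pm1}$ up to a power of $\zeta$, since $\zeta^{-1}+\zeta=1$;
\item[] for $d=3$ one gets $0$.
\end{itemize}
Here $c$ ranges over powers of $\zeta=-\omega$. Thus the trace set is contained in the set stated in Theorem~\ref{TR-omega}.

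For the reverse inclusion, consider the homomorphism $(\chi_1,\chi_2)\colon G_q(\zeta)\to C_6\times C_6$. Its image contains $(1,\zeta)$ from $R_\zeta$ and $(\zeta,\zeta^{-2})$ from $S_\zeta$. These two generate all of $C_6\times C_6$: in additive notation, $(0,1)$ and $(1,-2)$ have determinant $-1$, a unit. Hence every pair $(\zeta^a,\zeta^b)$ occurs, and so every value $\zeta^a+\zeta^b$ is realized as a trace.

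I expect no serious obstacle. The only delicate points are the root-of-unity identities $1-\zeta=\zeta^{-1}$ and $-\zeta^{-2}=\zeta$, together with the handling of the conjugate choice of $\zeta_6$, where the common eigenvector becomes the conjugate vector. If I wanted a check independent of the triangularization, I would compare against $\Tr(R_q^3S_q)|_{q=\zeta_6}=[3]_{\zeta_6}=1+\zeta+\zeta^2=2\zeta$. This agrees with $\chi_1+\chi_2=\zeta+\zeta^{3-2}=2\zeta$.
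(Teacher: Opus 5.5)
Your proof is correct and follows the same route as the paper. The paper conjugates by $P=\begin{pmatrix}1&1\\-\omega^2&0\end{pmatrix}$, whose first column is exactly your common eigenvector $v=(1,1-\zeta)^T$ (since $1-\zeta=1+\omega=-\omega^2$); this gives diagonals $(1,\zeta)$ and $(\zeta,\zeta^{-2})$, from which the traces $\zeta^a+\zeta^b$ are read off. Your determinant argument that $(\chi_1,\chi_2)$ maps onto $C_6\times C_6$ also supplies the justification the paper omits when it asserts that every choice of diagonal entries actually occurs.
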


\begin{proof}
The following two elements
$$R_{-\omega}:=R_q|_{q=-\omega}=\left( \begin{array}{cccc}
-\omega&1\\0&1 \end{array}\right) \quad \text{and} \quad 
S_{-\omega}:=S_q|_{q=-\omega}=\left( \begin{array}{cccc}0&\omega^2\\1&0 \end{array}\right) 
$$
generate $G_q(-\omega)$. 
For $P=\begin{pmatrix}1 &1 \\ -\omega^2&0 \end{pmatrix}$, we have 
$$P^{-1}R_{-\omega}P=\left( \begin{array}{cccc}
1&0\\0&-\omega \end{array}\right) \quad \text{and} \quad 
P^{-1}S_{-\omega}P
=\left( \begin{array}{cccc}-\omega&-\omega\\0&\omega \end{array}\right). 
$$
Hence the group $G_q(-\omega)$ is simultaneously upper-triangularizable by $P$, and any element of $P^{-1}G_q(-\omega)P$ is of the forms 
$$(-\omega)^j\left( \begin{array}{cccc}
(-\omega)^k&*\\0&1 \end{array}\right)$$
for $1 \le j,k \le 5$, and any choice of $j,k$ actually appears in $G_q(-\omega)$.  
Since $1+\omega=-\omega^2$, 
$1-\omega=\sqrt{3}\cdot \zeta_{12}$, 
$1+\omega^2=-\omega$ and $1-\omega^2=-\omega^2(1-\omega)$, the assertion follows. 
\end{proof}

\begin{rem}
Since $G_q(-\omega)$ is simultaneously upper-triangularizable, it is solvable. This recovers the corresponding statement in \cite{TU25}. Note that $G_q(\zeta_n)$ is solvable for $n=2,3,4$, but $G_q(\zeta_5)$ is not.
\end{rem}

\begin{cor}\label{trace finite}
For $\zeta \in \CC^*$,  
the set $\{ \Tr M \mid M \in G_q(\zeta)\}$ is finite if and only if $\zeta=\zeta_n$ for $n=2,3,4, 5, 6$.    
\end{cor}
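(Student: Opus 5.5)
We split according to whether $\zeta$ is a root of unity of small order.

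\emph{Sufficiency.} For $\zeta=\zeta_n$ with $n=2,3,4,5$, the group $G_q(\zeta_n)$ is finite by Theorem~\ref{finite}, so its set of traces is finite. The sets are listed explicitly in Corollary~\ref{trace for 3,4,5}(1) for $n=3,4,5$, and for $n=2$ the trace lies in $\{0,\pm1,\pm2\}$. For $n=6$, Theorem~\ref{TR-omega} gives finiteness directly: $G_q(-\omega)$ is simultaneously upper-triangularizable. Each conjugated element is $(-\omega)^j\begin{pmatrix}(-\omega)^k & * \\ 0 & 1\end{pmatrix}$, so its trace $(-\omega)^j\bigl((-\omega)^k+1\bigr)$ takes only finitely many values.

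\emph{Necessity.} Suppose $\zeta$ is not of the form $\zeta_n$ with $n\ge2$. Since $\zeta\neq 1$ is allowed here, we also need $\zeta=1$, where $G_q(1)=\SL(2,\ZZ)$ and $\Tr R^mS=m$ is unbounded. In general, consider $X_j=R_q^{\,j}S_q|_{q=\zeta}=\begin{pmatrix}[j]_\zeta & -\zeta^{j-1}\\ 1&0\end{pmatrix}$, whose trace is $[j]_\zeta$. If $\zeta$ is not a root of unity of order $\ge 2$, then either $\zeta=1$ and $[j]_\zeta=j$, or $[j]_\zeta=(1-\zeta^j)/(1-\zeta)$ with $\zeta^j$ pairwise distinct. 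In the latter case the values $[j]_\zeta$ are pairwise distinct, because $[j]_\zeta=[j']_\zeta$ forces $\zeta^j=\zeta^{j'}$. Either way, the trace set $\{\Tr X_j\mid j\ge1\}$ is infinite. This argument is cleaner than distinguishing $|\zeta|\ne1$ from non-torsion $\zeta$ on the unit circle.

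Finally, let $\zeta=\zeta_n$ with $n\ge7$. The final step of the proof of Theorem~\ref{finite} already shows that $|\Tr(X_j^{\,m})|\to\infty$ for the $j$ supplied by Lemma~\ref{lemma_j-1}. Hence the trace set is infinite.

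The only point requiring care is the non-root-of-unity case. The injectivity of $j\mapsto[j]_\zeta$ handles it cleanly, so no step is a genuine obstacle. The substantive content is carried by Theorem~\ref{finite}, Lemma~\ref{lemma_j-1} and Theorem~\ref{TR-omega}.
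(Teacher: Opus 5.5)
Your proof is correct and has the same structure as the paper's. Sufficiency comes from finiteness of $G_q(\zeta_n)$ for $n=2,3,4,5$ and from Theorem~\ref{TR-omega} for $n=6$, and the case $n\ge 7$ from the final step of the proof of Theorem~\ref{finite}. The only difference is your witness when $\zeta$ is not a primitive root of unity of order at least $2$: you use $R_q^{\,j}S_q$, with trace $[j]_\zeta$, where the paper uses $R_q^{\,n}$, with trace $\zeta^n+1$. Your choice has the advantage of covering $\zeta=1$, where $\Tr R^n=2$ is constant, so the paper's parenthetical hint does not literally apply there.
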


\begin{proof}
Sufficiency: If $n =2,3,4,5$, then $G_q(\zeta_n)$ itself is finite by Theorem~\ref{finite}, 
and the case $n=6$ follows from Theorem~\ref{TR-omega}. 

Necessity: If $\zeta \ne \zeta_n$ for all $n \ge 2$, then $\{ \Tr M \mid M \in G_q(\zeta)\}$ is clearly infinite (consider $R_q^{\, n}|_{q=\zeta} \in G_q(\zeta)$). The case $\zeta=\zeta_n$ for $n \ge 7$ follows from the final step of the proof of Theorem~\ref{finite}.  
\end{proof}

\begin{prop}\label{multiple of 6}
If $\cS_{\frac{r}s}(-\omega)=0$, then $[6]_q=1+q+\cdots +q^5$ divides $\cS_{\frac{r}s}(q)$, and hence $s$ is a multiple of $6.$  
\end{prop}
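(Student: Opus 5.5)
Since $[6]_q=(1+q)(1-q+q^2)(1+q+q^2)=\Phi_2(q)\Phi_6(q)\Phi_3(q)$ is a product of distinct cyclotomic polynomials, we must show that $\cS_{\frac{r}s}(-\omega)=0$ forces $\cS_{\frac{r}s}(-1)=0$ and $\cS_{\frac{r}s}(\omega)=0$. The factor $\Phi_6(q)$ then divides $\cS_{\frac{r}s}(q)$ automatically: it is the minimal polynomial of $-\omega$ over $\QQ$, and everything lies in $\ZZ[q]$ with monic divisors. The pairwise coprime factors then give $[6]_q \mid \cS_{\frac{r}s}(q)$. By the results recalled in \S4, $\cS_{\frac{r}s}(-1)=0 \iff s\in 2\ZZ$ and $\cS_{\frac{r}s}(\omega)=0 \iff s\in 3\ZZ$. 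So the goal reduces to: $\cS_{\frac{r}s}(-\omega)=0$ implies $2\mid s$ and $3\mid s$. The final claim $6\mid s$ then follows by evaluating $\cS_{\frac{r}s}(q)=[6]_q\,g(q)$ at $q=1$.

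To get the congruences, I will use reduction modulo primes above $2$ and $3$ in $\ZZ[\zeta_6]=\ZZ[\omega]$. Consider the ring homomorphisms $\ZZ[q^{\pm1}]\to \ZZ[\omega]/(\pi)$ obtained from $q\mapsto -\omega$, where $\pi$ is a prime of $\ZZ[\omega]$ with $-\omega\equiv 1 \pmod \pi$.

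For $p=3$, take $\pi=1-\omega$ and not $2$. Indeed, $1+\omega=-\omega^2$ is a unit, so $-\omega\not\equiv1$ modulo any prime over $2$. Over $3$, however, $-\omega\equiv -1 \pmod{1-\omega}$, again not $1$. So reduction at $q=-\omega$ corresponds to $q\equiv -1$ modulo $(1-\omega)$, and to $q\equiv \omega$ modulo $2$. Concretely, $\cS_{\frac{r}s}(-\omega)=0$ gives:
\begin{itemize}
\item[(a)] modulo $(1-\omega)$, with $\ZZ[\omega]/(1-\omega)\cong\FF_3$, the value $\cS_{\frac{r}s}(-1)\equiv 0 \pmod 3$. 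Since $\cS_{\frac{r}s}(-1)\in\{0,\pm1\}$ by \cite[Proposition~1.8]{MO1}, this forces $\cS_{\frac{r}s}(-1)=0$, i.e.\ $s$ is even.
\item[(b)] modulo $2$, with $\ZZ[\omega]/(2)\cong\FF_4$ and $-\omega\equiv\omega$, the value $\cS_{\frac{r}s}(\omega)\equiv 0 \pmod 2$. By \eqref{omega}, $\cS_{\frac{r}s}(\omega)$ is either $0$ or $\pm\omega^k$, a unit. A unit cannot be $\equiv0 \pmod 2$, so $\cS_{\frac{r}s}(\omega)=0$, i.e.\ $3\mid s$.
\end{itemize}
To justify these reductions rigorously, write $\cS_{\frac{r}s}(q)=\Phi_6(q)h(q)$ with $h\in\ZZ[q]$ (Gauss's lemma, since $\Phi_6$ is monic). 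Then evaluate at $q=-1$, where $\Phi_6(-1)=3$, and at $q=\omega$, where $\Phi_6(\omega)=1-\omega+\omega^2=-2\omega$. This gives $\cS_{\frac{r}s}(-1)\in 3\ZZ$ and $\cS_{\frac{r}s}(\omega)\in 2\ZZ[\omega]$ directly, without any abstract residue-field language.

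The main subtlety is the case analysis in (a) and (b): it uses the precise value sets from \cite{MO1} and \eqref{omega}, namely that the nonzero values are units of absolute value $1$. A nonzero value divisible by $3$ (respectively by $2$) is impossible because its norm would be at least $9$ (respectively $4$). I expect no further obstacle. Combining (a) and (b), $\Phi_2\Phi_3\Phi_6=[6]_q$ divides $\cS_{\frac{r}s}(q)$, and hence $6\mid s$.
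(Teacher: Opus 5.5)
Your proof is correct and is essentially the paper's. Both write $\cS_{\frac{r}s}=\Phi_6\,h$, then use $\Phi_6(-1)=3$ together with $\cS_{\frac{r}s}(-1)\in\{0,\pm1\}$, and $\Phi_6(\omega)=-2\omega$ together with \eqref{omega}, to force vanishing at $-1$ and $\omega$; the paper phrases this as $h(-1)=h(\omega)=0$, which is equivalent. The opening residue-field framing is muddled: as you yourself observe, no prime of $\ZZ[\omega]$ has $-\omega\equiv 1$, so it can simply be deleted in favour of your concrete evaluation argument.
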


\begin{proof}
Assume that $\cS_{\frac{r}s}(-\omega)=0$. Then we have $\cS_{\frac{r}s}(q)=(q^2-q+1)\cdot f(q)$ for some $f(q) \in \ZZ[q]$. 
Since  $\cS_{\frac{r}s}(-1)=\{ 0, \pm 1 \}$  by \cite[Proposition 1.8]{MO1}, $f(-1) \in \ZZ$ and $(-1)^2-(-1)+1=3$, we have $f(-1)=0$, and $q+1$ divides $f(q)$. 
By \cite[Theorem~7.1]{KMRWY} (i.e., \eqref{omega} of this paper), we have $|\cS_{\frac{r}s}(\omega)| \le 1$.  Clearly,  $f(\omega) \in \ZZ[\omega]$, $|f(\omega)|^2 \in \ZZ$, and $\omega^2-\omega +1=-2 \omega$, so it follows that $f(\omega)=0$, and hence $q^2+q+1$ divides $f(q)$. Putting the above together, $(q+1)(q^2+q+1)(q^2-q+1)=[6]_q$ divides $\cS_{\frac{r}s}(q)$. 
\end{proof}

\begin{rem}
(1) The condition $s \in 6\ZZ$ does not necessarily imply $\cS_{\frac{r}s}(-\omega)=0$. For example, we have $\cS_{\frac5{12}}(q)
=(q+1)(q^2+1)(q^2+q+1)$. 
Of course, $\cS_{\frac{r}{s}}(-\omega)=0$ might happen. A trivial   example is $\cS_{\frac{1}{6n}}(q)=[6n]_q=1+q+\cdots+q^{6n-1}$  for a positive integer $n$. Another example is $\cS_{\frac{5}{24}}(q)=
(q+1)^2(q^2+1)(q^2-q+1)(q^2+q+1)$. 

(2) For $M_q \in G_q$, set $f(q):= \Tr M_q$. If $f(-\omega)=0$, then $f(1)$ is even. In fact, since $|f(-1)|\le 2$ and $(-1)^2-(-1)+1=3$, we can use the argument in the first half of the proof of Proposition~\ref{multiple of 6} here.  However, since the case $f(\omega)=2 \omega^j$ is possible, the second half of the proof does not go through.
Therefore, $f(1)$ need not be a multiple of 3, even if $f(-\omega)=0$. For example, we have $\Tr M_q(2,2,2)=(q+1)(q^2-q+1)$ (clearly $f(1)=2$).
\end{rem}

\begin{cor}\label{flat finite}
For $\zeta \in \CC^*$, the following are equivalent. 
\begin{itemize} 
\item[(1)]  The set $\{ \, \flS_{\frac{r}s}(\zeta) \mid \frac{r}s >1 \, \}$ is finite. 
\item[(2)]  The set $\{ \, J_{\frac{r}s}(\zeta) \mid \frac{r}s >1 \, \}$ is finite. 
\item[(3)] $\zeta =\zeta_n$ for $n=2,3,4,5,6$. 
\end{itemize}
\end{cor}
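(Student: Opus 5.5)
The plan is to prove $(1)\Leftrightarrow(2)$ directly from Theorem~\ref{J-L}, and then prove $(2)\Leftrightarrow(3)$ by reducing to what is already known about traces and about $\cR,\cS$ at roots of unity.

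\emph{Step 1: $(1)\Leftrightarrow(2)$.} By \eqref{Jones and flat}, $J_{\frac{r}s}(q)=\flS_{\frac{s}r}(q)$ for $\frac{r}s>1$. Hence the set in (2) is $\{\flS_{\frac{s}r}(\zeta)\mid \frac{r}s>1\}$, which is the set of values $\flS_\alpha(\zeta)$ over $0<\alpha<1$. By \eqref{R and S for flat} together with $\flS_{\alpha+n}=\flS_\alpha$, the set of polynomials $\flS_\alpha(q)$ is the same whether $\alpha$ ranges over $\QQ$, over $\alpha>1$, or over $0<\alpha<1$. (Translating $0<\alpha<1$ by $1$ covers $1<\alpha<2$, and the statement for all of $\QQ$ reduces to this via translation.) So the two sets coincide.

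\emph{Step 2: (3) implies (2).} Write $J_{\frac{r}s}(\zeta)=\zeta\cR_{\frac{r}s}(\zeta)+(1-\zeta)\cS_{\frac{r}s}(\zeta)$.
\begin{itemize}
\item For $n=2,3,4,5$, Corollary~\ref{S finite} shows that both $\cR_{\frac{r}s}(\zeta_n)$ and $\cS_{\frac{r}s}(\zeta_n)$ take finitely many values, so $J_{\frac{r}s}(\zeta_n)$ does as well.
\item For $n=6$, I would use the simultaneous triangularization from the proof of Theorem~\ref{TR-omega}. The column $(\cR,\cS)^T$ of $M_q(c_1,\dots,c_k)|_{q=-\omega}$ equals $M\,(1,0)^T$ with $M\in G_q(-\omega)$. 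Set $v:=P^{-1}(1,0)^T$. Every element of $P^{-1}G_q(-\omega)P$ has the form $(-\omega)^j\begin{pmatrix}(-\omega)^k&x\\0&1\end{pmatrix}$.
\item The key observation is that $(q\ \ 1-q)$ at $q=-\omega$, i.e.\ the linear functional $L(x,y)=-\omega x+(1+\omega)y$, composed with $P$ should be proportional to $(0,1)$. That is, $L$ is a left eigenvector of the group, picking out the $(2,2)$ entry in the triangular basis. If so, $J(-\omega)=LPg\,v=c\cdot(-\omega)^j\,v_2$, which takes only finitely many values.
\item To verify: $LP=(-\omega-(1+\omega)\omega^2,\ -\omega)=(-\omega-\omega^2-1,\ -\omega)=(0,-\omega)$. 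This works, so $J_{\frac{r}s}(-\omega)\in\{\pm\omega^j\cdot\text{const}\}$ is a finite set.
\end{itemize}

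\emph{Step 3: (2) implies (3).}
\begin{itemize}
\item \emph{$\zeta$ not a root of unity.} Take $\frac{r}s=m$ an integer, so $\cR_m=q[m-1]_q+1=[m]_q$ up to normalization and $\cS_m=1$. Then $J_m(\zeta)=\zeta[m]_\zeta+1-\zeta$, which takes infinitely many values since the values $[m]_\zeta$ are pairwise distinct. This case also covers $\zeta=1$, where $J_m(1)=m$.
\item \emph{$\zeta=\zeta_n$ with $n\ge7$.} Use the same family: $J_m(\zeta_n)=\zeta_n[m]_{\zeta_n}+1-\zeta_n$ is periodic in $m$, so it is finite, and we need a richer family. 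I would use a family whose $J$ is essentially a trace, i.e.\ $J(\zeta_n)$ grows like $\Tr X_j^m$ with $X_j$ as in the final step of the proof of Theorem~\ref{finite}.
\item Concretely, take $\frac{r}s$ with HJ expansion $[[j,j,\dots,j]]$ ($m$ copies), so that $(\cR,\cS)^T=X_j^m(1,0)^T$ with $X_j$ hyperbolic. Then $J=\zeta\cR+(1-\zeta)\cS=\ell\,X_j^m e_1$ for the fixed nonzero functional $\ell=(\zeta,1-\zeta)$. This is unbounded unless $\ell$ annihilates the expanding eigenvector paired with $e_1$'s expanding component. Both are nonzero generically, and I would check this directly from the explicit eigenvector $(\lambda,1)^T$ of $X_j$: one needs $\zeta\lambda+1-\zeta\ne0$ and that $e_1$ has nonzero $\lambda$-component, the latter being automatic since $e_1$ is not an eigenvector.
\item This last verification, the non-vanishing of $\zeta\lambda+1-\zeta$ for the chosen $j$ of Lemma~\ref{lemma_j-1}, is the main obstacle. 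If it fails, I would swap to $[[j,\dots,j,j']]$ or use $\flS$ via \eqref{Thomas} instead.
\end{itemize}
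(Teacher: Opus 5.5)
Your proposal is correct, and it takes a genuinely different route from the paper. The step you flag as the main obstacle in Step 3 holds for every $j$, so no fallback is needed. If $\zeta\lambda+1-\zeta=0$, then $\lambda=1-\zeta^{-1}$. Substituting this into the characteristic polynomial of $X_j$ gives
\[
(1-\zeta^{-1})^2-[j]_\zeta(1-\zeta^{-1})+\zeta^{j-1}
=\zeta^{-2}\bigl((\zeta-1)^2-\zeta(\zeta^{j}-1)+\zeta^{j+1}\bigr)
=\zeta^{-2}(\zeta^2-\zeta+1),
\]
which vanishes only at primitive sixth roots of unity. So for $n\ge 7$, write $e_1=a u+b u'$ with $u=(\lambda,1)^T$, $u'=(\lambda',1)^T$ and $a=-b=(\lambda-\lambda')^{-1}\neq 0$. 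Then $\ell u=\zeta\lambda+1-\zeta\neq0$, and along $[[j,\dots,j]]$ you get $|J|\ge |a|\,|\ell u|\,|\lambda|^m-|b|\,|\ell u'|\,|\lambda'|^m\to\infty$. (With $\zeta=e^{2\pi i/n}$, the normalization used in Lemma~\ref{lemma_j-1}, you can also just note $|1-\zeta^{-1}|=2\sin(\pi/n)<1<|\lambda|$.)

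The factor $\zeta^2-\zeta+1$ is exactly the paper's $\Delta(q)$. The row vector $(q,1-q)$ is a common left eigenvector of $R_q$ and $S_q$ precisely when $\Delta(q)=0$. This is the same phenomenon behind your identity $LP=(0,-\omega)$ at $n=6$.

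There is one minor inaccuracy in Step 1. The set of $\flS_\alpha$ over $0<\alpha<1$ is not literally equal to the set over $\alpha>1$. It misses the constant polynomial $\flS_m=1$ coming from integers $m$, since $\flS_\alpha(1)$ is the denominator of $\alpha$. The two sets differ by at most one element, so the equivalence of finiteness is unaffected.

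Comparison of routes. For all $n\neq 6$ the paper inverts the relation \eqref{Thomas}, whose matrix has determinant $\Delta(q)$. This gives \eqref{Thomas dual}, which shows that $\{\flS_{\frac rs}(\zeta_n)\}$ is finite if and only if $\{\cS_{\frac rs}(\zeta_n)\}$ is; the paper then invokes Corollary~\ref{S finite}. For $n=6$ it simply cites the classical finiteness of Jones polynomial values at $-\omega$.

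You never use the Thomas identity:
\begin{itemize}
\item For $n=2,3,4,5$ you need only $J=q\cR+(1-q)\cS$ together with Corollary~\ref{S finite}.
\item For $n=6$ you derive from the triangularization in Theorem~\ref{TR-omega} that $J_{\frac rs}(-\omega)$ is always a sixth root of unity. This replaces the citation with a proof.
\item For $n\ge 7$ you exhibit an explicit family along which $|J_{\frac rs}(\zeta_n)|\to\infty$.
\end{itemize}
The paper's argument is shorter and treats all $n\neq 6$ uniformly. Yours is more self-contained and more quantitative: it gives explicit values at $-\omega$, and unboundedness rather than mere infiniteness for $n\ge 7$.
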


\begin{proof}
The equivalence between (1) and (2) is immediate from Theorem~\ref{J-L} and \eqref{R and S for flat}. So we will prove the equivalence between (1) and (3) freely using  (1) $\Longleftrightarrow$ (2). 

Since $J_{\frac{n}1}(q)=1+q^2+q^3+\cdots+q^n$ for $n \ge 2$, $\{ J_{\frac{r}s}(\zeta) \mid \frac{r}s > 1\}$ is infinite (equivalently, $\{\flS_{\frac{r}s}(\zeta) \mid \frac{r}s > 1\}$ is infinite) if  $\zeta \ne \zeta_n$ for all $n \ge 2$. So we may assume that $\zeta =\zeta_n$ for some $n \ge 2$. 
By \eqref{R and S for flat},  $\{ \flS_{\frac{r}s}(\zeta_n) \mid  \frac{r}s >1 \}$ is finite,  if and only if so is $\{ \flR_{\frac{r}s}(\zeta_n) \mid  \frac{r}s >1 \}$, if and only if
so is  $\{ \zeta_n^{\, v(r/s)} \flS_{\frac{r}s}(\zeta_n)^\vee \mid  \frac{r}s >1 \}$ in the notation of \eqref{Thomas}. 

It is a classical result that $\{ \, J_{\frac{r}s}(-\omega) \mid \frac{r}s >1 \, \}$ is finite (c.f. the last part of \cite{KMRWY}). So we may assume that $n \ne 6$.   By \eqref{Thomas}, for $\alpha=\frac{r}s$, we have 
\begin{equation}\label{Thomas dual}
\begin{pmatrix}
\Delta(q) \cdot \cR_\alpha(q) \\
\Delta(q) \cdot \cS_\alpha(q)
\end{pmatrix}=
\begin{pmatrix}
1 & q-1 \\
1-q &q
\end{pmatrix}
\begin{pmatrix}
\flR_\alpha(q)^\vee \\
q^{v(\alpha)}\flS_\alpha(q)^\vee
\end{pmatrix},
\end{equation}
where $\Delta(q):= q^2-q+1$. Since $n \ne 6$ now,  we have $\Delta(\zeta_n)\ne 0$.  By the above observations, 
$\{ \, \flS_{\frac{r}s}(\zeta_n) \mid \frac{r}s >1 \, \}$ is finite if and only if so is  $\{ \, \cS_{\frac{r}s}(\zeta_n) \mid \frac{r}s >1 \, \}$. Hence, the assertion follows from Corollary~\ref{S finite}.
\end{proof}

Let $\Phi_n(q)$ be the $n$-th cyclotomic polynomial. 
We now restate Conjecture~\ref{[n]_q divides}. 

\medskip

\noindent{\bf Conjecture~\ref{[n]_q divides}.} 
For $n \ge 2$, if $\Phi_n(q)$ divides 
$\cS_{\frac{r}s}(q)$, then $[n]_q$ divides $\cS_{\frac{r}s}(q)$.  In particular, $\cS_{\frac{r}s}(\zeta_n)=0$ implies $s \in n \ZZ$.

\medskip

The following is an evidence of the conjecture.

\begin{prop}\label{evidence}
Conjecture~\ref{[n]_q divides} holds if  $n$ is a  prime number or $n=4,6,8,9,10,15, 25$.  
\end{prop}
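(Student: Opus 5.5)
The plan is to imitate the proof of Proposition~\ref{multiple of 6}. Throughout, $\cS:=\cS_{\frac{r}s}(q)$, and we write $\cS=\Phi_n(q)f(q)$ with $f(q)\in\ZZ[q]$. For prime $n$ there is nothing to prove, since $[n]_q=\Phi_n(q)$. For composite $n$ we have $[n]_q=\prod_{d\mid n,\,d>1}\Phi_d(q)$. So it suffices to show that $\Phi_d(q)\mid f(q)$ for every proper divisor $d>1$ of $n$; these $\Phi_d$ are pairwise coprime and coprime to $\Phi_n$.

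The tool for each proper divisor $d$ is the norm/size obstruction. By Corollary~\ref{zeta5}, \eqref{omega}, \eqref{zeta_4 2} and \cite[Proposition~1.8]{MO1}, the value $\cS(\zeta_d)$ is either $0$ or a number of small norm when $d\in\{2,3,4,5\}$. Precisely, $|\cS(-1)|\le 1$; $\cS(\omega)\equiv 0,1$; $\cS(i)\equiv 0,1,1+i$, so $N(\cS(i))\in\{0,1,2\}$; and $N(\cS(\zeta_5))\in\{0,1,5\}$ by Remark~\ref{remark 5}(3). On the other hand $\cS(\zeta_d)=\Phi_n(\zeta_d)f(\zeta_d)$ with $f(\zeta_d)\in\ZZ[\zeta_d]$. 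Whenever $f(\zeta_d)\neq 0$, $N(f(\zeta_d))$ is a positive integer, so
\[
N(\cS(\zeta_d))\ \text{is a positive multiple of}\ N(\Phi_n(\zeta_d)).
\]
Now compute the resultant-type norms $N_{\QQ(\zeta_d)/\QQ}(\Phi_n(\zeta_d))$. When $n/d$ is a power of a prime $p$ this norm equals $p^{\varphi(d)}$, and otherwise it equals $1$. The cases to check are:
\begin{itemize}
\item[] $n=4$: $\Phi_4(-1)=2>1$.
\item[] $n=6$: already done in Proposition~\ref{multiple of 6}.
\item[] $n=8$: $\Phi_8(-1)=2$, and $N(\Phi_8(i))=N(2)=4>2$.
\item[] $n=9$: $N(\Phi_9(\omega))=9$.
\item[] $n=10$: $\Phi_{10}(-1)=5$, and $N(\Phi_{10}(\zeta_5))=5^4$.
\item[] $n=15$: $N(\Phi_{15}(\omega))=5^2$ and $N(\Phi_{15}(\zeta_5))=3^4$.
\item[] $n=25$: $N(\Phi_{25}(\zeta_5))=5^4$.
\end{itemize}
In each case the norm exceeds the possible nonzero values $1,2,5$ (or $|\cdot|^2\le 1$, resp.\ $\le 1$ at $\omega$). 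This forces $f(\zeta_d)=0$, i.e.\ $\Phi_d\mid f$. For the prime-power divisors, e.g.\ $d=2$ in $n=8$ and $d=3$ in $n=9$, each step only uses the value of $\cS$ at $\zeta_d$, so they can be treated independently. Finally, $\cS(1)=s$ and $[n]_1=n$ give $s\in n\ZZ$.

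The main obstacle is making sure every divisor $d$ of each listed $n$ lies in $\{2,3,4,5\}$, where we know the finite value sets. This holds for $4,6,8,9,10,15,25$; it fails for e.g.\ $12$ (divisor $6$, where $\cS(\zeta_6)$ is unbounded) or $14$, which explains the list. The remaining care is in the norm computations, especially $n=8$ at $d=4$. There the bound $N(\cS(i))\le 2$ from \eqref{zeta_4 2} must beat $N(\Phi_8(i))=N(i^4+1)=N(2)=4$, and it does.
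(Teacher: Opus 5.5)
Your proof is correct and is essentially the paper's argument: write $\cS_{\frac{r}s}=\Phi_n f$, then force $f(\zeta_d)=0$ for each proper divisor $d\in\{2,3,4,5\}$ of $n$ by comparing the norm of $\Phi_n(\zeta_d)$ with the known small values of $\cS_{\frac{r}s}(\zeta_d)$. The differences are cosmetic: you treat $n=4$, and $d=2$ for $n=8$, directly via $\Phi_n(-1)=2$ and $|\cS_{\frac{r}s}(-1)|\le 1$ instead of citing \eqref{zeta_4}, and you organize the norms through the cyclotomic resultant formula; one slip is that $N(\Phi_{10}(\zeta_5))=2^4=16$, not $5^4$ (indeed $\Phi_{10}(\zeta_5)=-2\zeta_5(1+\zeta_5^2)$, and your own formula with $n/d=2$ gives $2^{\varphi(5)}$), which is harmless since $16>5$.
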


\begin{proof}
If $n$ is prime, the assertion is obvious. The case $n=4$ was proved in  \cite[Theorem~7.6]{KMRWY} (i.e., the equation \eqref{zeta_4} of this paper).  The case $n=6$ was established in Proposition~\ref{multiple of 6}. Let us prove the remaining cases. 
Assume that $\cS_{\frac{r}s}(q)=\Phi_n(q)\cdot f(q)$ for some $f(q) \in \ZZ[q]$. In the remainder of this proof, we use this notation. 

\medskip

\noindent {\bf The case $n=8$:} Note that $\Phi_8(q)=q^4+1$. 
Since  $|\cS_{\frac{r}s}(i)| \le \sqrt{2}$  by \cite[Corollary~7.7]{KMRWY} (i.e., the equation \eqref{zeta_4 2} of this paper), $|f(i)|^2 \in \ZZ$, and $\Phi_8(i)=2$, we have $f(i)=0$, and hence $\cS_{\frac{r}s}(i)=0$.   
It implies that $[4]_q$ divides $\cS_{\frac{r}s}(q)$ by \eqref{zeta_4}. 
Putting the above together, $[8]_q=\Phi_4(q) \cdot [4]_q$ also divides $\cS_{\frac{r}s}(q)$.

\medskip

\noindent {\bf The case $n=9$:} Note that $\Phi_9(q)=q^6+q^3+1$. 
Since  $|\cS_{\frac{r}s}(\omega)| \le 1$  by \eqref{omega}, 
$|f(\omega)|^2 \in \ZZ$, and $\Phi_9(\omega)=3$, we have $f(\omega)=0$, that is, $[3]_q$ divides $f(q)$.  Hence, $[9]_q=\Phi_9(q) \cdot [3]_q$ divides $\cS_{\frac{r}s}(q)$.

\medskip

\noindent {\bf The case $n=10$:} Note that $\Phi_{10}(q)=q^4-q^3+q^2-q+1$. Since $\Phi_{10}(-1)=5$, $[2]_q=q+1$ divides $f(q)$ by the above argument. 
We can check that $N(\Phi_{10}(\zeta_5))=16$, where $N(-)$ is the norm of $\QQ(\zeta_5)/\QQ$ given in \eqref{norm}. By Remark~\ref{remark 5} (3), we have $f(\zeta_5)=0$, and $[5]_q$ divides $f(q)$.  Since $[10]_q=\Phi_{10}(q)\cdot[2]_q \cdot [5]_q$, we are done.

\medskip

\noindent {\bf The case $n=15$:} Note that $\Phi_{15}(q)=q^8-q^7+q^5-q^4+q^3-q+1$. Since $|\Phi_{15}(\omega)|=|5\omega|=5$, we have that $[3]_q$ divides $f(q)$ by the above argument. Similarly, since  $N(\Phi_{15}(\zeta_5))=81$, $[5]_q$ divides $f(q)$.  Since $[15]_q=\Phi_{15}(q)\cdot[3]_q \cdot [5]_q$, we are done.

\medskip

\noindent {\bf The case $n=25$:} Note that $\Phi_{25}(q)=q^{20}+q^{15}+q^{10}+q^5+1$. Since we can check that  $N(\Phi_{25}(\zeta_5))=625$, $[5]_q$ divides $f(q)$. Since $[25]_q=\Phi_{25}(q)\cdot [5]_q$, we are done. 
\end{proof}

\begin{prop}\label{12,20}
For $n=12,20$, $\cS_{\frac{r}s}(\zeta_n)=0$ implies $s \in n \ZZ$.     
\end{prop}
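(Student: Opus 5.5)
The plan is to follow the proof of Proposition~\ref{evidence}. Write $\cS_{\frac{r}s}(q)=\Phi_n(q)\cdot f(q)$ with $f(q)\in\ZZ[q]$. For a suitable divisor $m$ of $n$, evaluate at $\zeta_m$. The earlier bounds on $|\cS_{\frac{r}s}(\zeta_m)|$ (or on its field norm), combined with $|\Phi_n(\zeta_m)|$ being large, should force $f(\zeta_m)=0$, because $f(\zeta_m)$ is an algebraic integer whose norm is a non-negative rational integer. Once $\cS_{\frac{r}s}(\zeta_m)=0$, the known results for $m=3,4,5$ give $s\in m\ZZ$. We do not try to prove that $[n]_q$ divides $\cS_{\frac{r}s}(q)$; in particular $\Phi_{12}(-1)=1$ gives no information at $m=2$. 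We only aim for $s\in n\ZZ$, using two coprime divisors of $n$ in each case.

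\textbf{The case $n=12$.} Here $\Phi_{12}(q)=q^4-q^2+1$.

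First, $\Phi_{12}(i)=3$. By \eqref{zeta_4 2} we have $|\cS_{\frac{r}s}(i)|\le\sqrt2$, so $|f(i)|^2\le 2/9$. Since $f(i)\in\ZZ[i]$, $|f(i)|^2$ is a non-negative integer, hence $f(i)=0$. Therefore $\cS_{\frac{r}s}(i)=0$, and \eqref{zeta_4} gives $s\in4\ZZ$.

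Second, $\Phi_{12}(\omega)=\omega-\omega^2+1=-2\omega^2$, which has absolute value $2$. By \eqref{omega}, $|\cS_{\frac{r}s}(\omega)|\le1$, so $|f(\omega)|^2\le 1/4$. Since $|f(\omega)|^2\in\ZZ$, we get $f(\omega)=0$. Hence $\cS_{\frac{r}s}(\omega)=0$ and $s\in3\ZZ$.

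Together these give $s\in12\ZZ$.

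\textbf{The case $n=20$.} Here $\Phi_{20}(q)=q^8-q^6+q^4-q^2+1=\Phi_{10}(q^2)$.

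At $q=i$ we have $\Phi_{20}(i)=5$. The same argument as above gives $|f(i)|^2\le 2/25$, so $f(i)=0$ and $s\in4\ZZ$.

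At $q=\zeta_5$, the element $\zeta_5^{\,2}$ is again a primitive fifth root of unity, so $N(\Phi_{20}(\zeta_5))=N(\Phi_{10}(\zeta_5^{\,2}))=N(\Phi_{10}(\zeta_5))=16$. This value was computed in the proof of Proposition~\ref{evidence}. By Remark~\ref{remark 5}(3), $N(\cS_{\frac{r}s}(\zeta_5))\le5$. The norm is multiplicative, so $N(f(\zeta_5))\le 5/16$. If $f(\zeta_5)\ne0$, then $N(f(\zeta_5))$ would be a positive integer, which is impossible. Hence $f(\zeta_5)=0$, so $\cS_{\frac{r}s}(\zeta_5)=0$, and Corollary~\ref{zeta5} gives $s\in5\ZZ$.

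Together these give $s\in20\ZZ$.

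\textbf{Main obstacle.} The only delicate point is evaluating $\Phi_n$ at the auxiliary roots of unity and checking that the resulting size exceeds the known bound on $\cS_{\frac{r}s}$. For $n=20$ this rests on the identity $\Phi_{20}(q)=\Phi_{10}(q^2)$ together with the earlier norm computation. If that identity were in doubt, the fallback is to compute $\prod_{j=1}^4\Phi_{20}(\zeta_5^{\,j})$ directly.
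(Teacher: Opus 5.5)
Your proof is correct and is essentially the paper's own argument. Like the paper, you evaluate $\Phi_{12}$ at $i$ and $\omega$ (resp.\ $\Phi_{20}$ at $i$ and $\zeta_5$), use the bounds from \eqref{omega}, \eqref{zeta_4 2} and Remark~\ref{remark 5}(3) to force $f$ to vanish there, and then apply the known characterizations for $m=3,4,5$. Your identity $\Phi_{20}(q)=\Phi_{10}(q^2)$ is a tidy justification of the value $N(\Phi_{20}(\zeta_5))=16$, which the paper simply asserts.
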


\begin{proof}
We argue as in the proof of Proposition~\ref{evidence}, using the same notation. 

\noindent {\bf The case $n=12$:} Note that $\Phi_{12}(q)=q^4-q^2+1$. Since $\Phi_{12}(i)=3$ (resp. $\Phi_{12}(\omega)=2 \omega+2$), $s$ is a multiple of 4 (resp. 3). Hence $s \in 12\ZZ$. 

\noindent {\bf The case $n=20$:} Note that $\Phi_{20}(q)=q^8-q^6+q^4-q^2+1$. Since $\Phi_{20}(i)=5$ (resp. $N(\Phi_{20}(\zeta_5)=16$), $s$ is a multiple of 4 (resp. 5). Hence $s \in 20\ZZ$. 
\end{proof}

\begin{rem}
(1)  Even if $\cS_{\frac{r}s}(\zeta_{12})=0$ (resp. $\cS_{\frac{r}s}(\zeta_{20})=0$), we cannot prove that $\cS_{\frac{r}s}(\zeta_6)=0$ (resp. $\cS_{\frac{r}s}(\zeta_{10})=0$), so it is still open whether $[12]_q$ (resp. $[20]_q$) divides $\cS_{\frac{r}s}(\zeta_q)$. 

(2) Since $\Phi_{30}(q)=q^8+q^7-q^5-q^4-q^3+q+1$ and  $\Phi_{30}(-1)=\Phi_{30}(i)=1$, it is not even known whether $\cS_{\frac{r}s}(\zeta_{30})=0$ implies $\cS_{\frac{r}s}(-1)=0$. 
On the other hand, since we have no results on $\{ \cS_{\frac{r}s}(\zeta_n) \mid \frac{r}s > 1 \}$ for $n \ge 7$, the argument used in the proofs of the above propositions cannot be applied if $s$ has a prime factor $p\ge7$.
\end{rem}

For the quantized triple $(\cA_{\frac{r}s}(q), \cB_{\frac{r}s}(q), \cC_{\frac{r}s}(q))$ introduced by \cite{MMO}, it is expected that $\cA_{\frac{r}s}(q)$ and $\cB_{\frac{r}s}(q)$ satisfy the property corresponding to Conjecture~\ref{[n]_q divides}. By Remark~\ref{ABC lem} (1), the statements corresponding to Propositions~\ref{evidence} and \ref{12,20} hold for  $\cA_{\frac{r}s}(q)$ and $\cB_{\frac{r}s}(q)$.


\medskip

\section*{Acknowledgments}
We are grateful to Yuta Hatasa and Natsuki Sanada  for valuable comments and assistance in the early stages of this project. We also appreciate the valuable comments or  encouragements by Professors Mitsuyasu Hashimoto, Akihiro Munemasa, Atsushi Takahashi, and Michihisa Wakui. We thank Professors S\'ebastien Labb\'e and Muhammed Uludağ for bringing their work to our attention.
Finally, we are grateful to the anonymous referee of an earlier version of this work for valuable comments, suggestions, and references, which led to a substantial revision of the present paper.

T. B is partially supported by the Kansai University Grantin-Aid for research progress in the graduate course, 2025. X. R is partially supported by JSPS KAKENHI Grant Number 21H04994, 19K03456. K. Y is partially supported by 25K06928. 

\medskip


\end{document}